\newcommand\com[1]{}
\newcommand\C{{\mathbb C}}
\newcommand\g{{\mathfrak g}}
\newcommand\Hmod{{\tilde{\mathrm{H}}}}
\newcommand\Hcech{{\check{\mathrm{H}}}}
\newcommand\HH{{\mathrm{H}}}
\newcommand\op[1]{\mathop{\rm #1}\nolimits}
\newcommand\p{\partial}
\newcommand\R{{\mathbb R}}
\newcommand\vf{\mathcal{D}}
\newcommand\J{\mathbf{J}}
\theoremstyle{plain}
\newtheorem*{theorem*}{Theorem}
\newtheorem{prop}{Proposition}
\newtheorem{theorem}[prop]{Theorem}
\newtheorem{cor}{Corollary}
\newtheorem{lemma}{Lemma}
\theoremstyle{definition}
\newtheorem{definition}{Definition}
\newtheorem{example}{Example}
\newtheorem{remark}{Remark}
\begin{document}

\title[]{Invariant divisors and equivariant line bundles}

\author[Boris Kruglikov]{Boris Kruglikov$^\dagger$}
\address{$^\dagger{}$ Department of Mathematics and Statistics, UiT the Arctic University of Norway, Troms\o\ 9037, Norway.}

\author[Eivind Schneider]{Eivind Schneider$^\dagger$}

\address{Email addresses:\qquad {\tt boris.kruglikov@uit.no}\quad\text{\rm and }\quad {\tt eivind.schneider@uit.no}\hspace{1pt}.}


 \begin{abstract}
Scalar relative invariants play an important role in the theory of group actions on a manifold as their zero sets 
are invariant hypersurfaces. 
Relative invariants are central in many applications, where they often are treated locally since
an invariant hypersurface may not be a locus of a single function.
Our aim is to establish a global theory of relative invariants.

For a Lie algebra $\g$ of holomorphic vector fields on a complex manifold $M$, any holomorphic 
$\g$-invariant hypersurface is given in terms of a $\g$-invariant divisor. 
This generalizes the classical notion of scalar relative $\g$-invariant.
Any $\g$-invariant divisor gives rise to a $\g$-equivariant line bundle, and a large part of this paper is therefore 
devoted to the investigation of the group $\mathrm{Pic}_\g(M)$ of $\g$-equivariant line bundles. We give a 
cohomological description of $\mathrm{Pic}_\g(M)$ in terms of a  double complex interpolating 
the Chevalley-Eilenberg complex for $\g$ with the \v{C}ech complex of the sheaf of holomorphic functions on $M$.

We also obtain results about polynomial divisors on affine bundles and jet bundles. This has applications 
to the theory of differential invariants.
Those were actively studied in relation to invariant differential equations, but the description of multipliers 
(or weights) of relative differential invariants was an open problem. We derive a characterization of them 
with our general theory. Examples, including projective geometry of curves and second-order ODEs, not only 
illustrate the developed machinery, but also give another approach and rigorously justify some 
classical computations. At the end, we briefly discuss generalizations of this theory.
 \end{abstract}

\maketitle
\tableofcontents
\bigskip

\section{Introduction}

\subsection{Background on relative invariants}

Consider a manifold $M$ together with a Lie group $G$ acting on $M$.
Let $\mathcal{F}(M)$ the algebra of functions on $M$ and $\mathcal{F}(M)^\times$ the
multiplicative subgroup of nonvanishing functions. The action of $g \in G$ on $M$ induces the pullback (right) action $g^*$ on $\mathcal{F}(M)$.
A (scalar) relative invariant is a function $R \in \mathcal{F}(M)$ satisfying
 \[ 
g^* R = \Lambda(g) R \qquad \forall g \in G,
 \]
for some map $\Lambda \colon G \to \mathcal{F}(M)^\times$, called the multiplier, or weight, of $R$. 
If $\g \subset \vf(M)$ denotes the Lie algebra of vector fields on $M$ corresponding to the Lie group action, then $R$ also satisfies
 \[ 
X(R) = \lambda(X) R \qquad \forall X \in \g,
 \]
for some (infinitesimal) multiplier $\lambda \in \g^* \otimes \mathcal{F}(M)$,
or weight, of $R$. It follows from the definition that
the locus $\{R=0\} \subset M$ is $G$-invariant (resp.\ $\g$-invariant).

In the case $\Lambda=1$ (resp.\ $\lambda=0$), the function $R$ is called an absolute invariant,
and each level set $\{R=\mathrm{const}\} \subset M$ is invariant, so that we get an invariant foliation of $M$.
Absolute invariants are well understood in several different settings, see
\cite{W,M,O1,SR} for the classical invariant theory and \cite{O,KL} for its differential counter-part.

For example, in the case of a regular smooth Lie group action on a smooth manifold, locally by the Frobenius theorem,
the number of functionally independent absolute invariants is equal to the codimension of an orbit,
and orbits are locally separated by that many invariants (see, for example, Chapter 2 of \cite{O}).
In the case of an algebraic group action on an algebraic variety, globally by the Rosenlicht theorem,
orbits in general position are separated by rational absolute invariants, and the number of
algebraically independent rational absolute invariants is equal to the codimension of a generic orbit
(see, for example, Chapter 13 of \cite{SR}).

Relative invariants with nontrivial weight are less understood, although they appear in many important applications
(we refer to the introduction to \cite{FO} and also to the more recent \cite{O2}).
In particular, they are often used to describe $\g$-invariant hypersurfaces containing singular orbits.
An infinitesimal multiplier $\lambda$ is a 1-cocycle of the Chevalley-Eilenberg complex of $\g$ with coefficients
in $\mathcal{F}(M)$. Relationships between the weights of relative (differential) invariants
and the Chevalley-Eilenberg cohomology was discussed in \cite{COW,O}.
The question of realizability of a given cocycle as the weight of some relative invariant was answered locally
in the case of a regular smooth $G$-action and $\mathcal{F}(M)=C^\infty(M)$
by M.\,Fels and P.\,Olver (\cite{FO} and \cite[Th.\,3.36]{O}),
also in the context of vector-valued relative invariants. In the general case the answer is not known.

Note that rescaling of $R$ by a non-zero function $e^f$, $f\in\mathcal{F}(M)$, changes $\lambda$ by a
coboundary $df$, which naturally associates the Chevalley-Eilenberg cohomology class $[\lambda]\in\HH^1(\g,\mathcal{F}(M))$
to the  (equivalence class of the) relative invariant $R$. A proper version of this cohomology will be central in our work.

\subsection{A setup for global invariants}

In general, the description of invariant hypersurfaces (analytic subvarieties of codimension 1) by relative invariants works only locally: 
there exist invariant hypersurfaces that cannot be described globally as 
the zero locus of a relative invariant. In this paper we restrict to holomorphic actions on complex manifolds, 
where this problem can be solved using the language of divisors. Some results extend to real analytic 
and algebraic situations, but smooth versions of our global results in general are not available. 
Thus we specialize our algebra of functions $\mathcal{F}(M)$ to consist of holomorphic functions,
and we will work with the sheaf $\mathcal{O}=\mathcal{O}_M$ of such functions on a complex manifold $M$.

In most of the paper we will concentrate on the infinitesimal (Lie algebra) picture as it is conceptually simpler 
and lends itself well to computations. Moreover, it is more general, as a Lie group action always gives rise 
to a Lie algebra of (complete) vector fields, but not every Lie algebra action
can be integrated (the manifold $M$ is not assumed compact; the Lie algebra may be infinite-dimensional).
It should be noted that for algebraic groups $G$ (as well as for compact Lie groups)
the equivariant line bundles have been well studied, see \cite[Ch.\,1.3]{M} and \cite[\S4.2]{B} for the definition
and properties of the $G$-equivariant Picard group $\op{Pic}_G(M)$ in the context of algebraic geometry.
Our setup is more general, and we present the corresponding theory for Lie groups in Section \ref{sect:Gequivar}.
The main object of study, however, will be the Picard group $\op{Pic}_\g(M)$ of $\g$-equivariant 
line bundles defined for any Lie algebra $\g$ of holomorphic vector fields on $M$.

A divisor $D$ on $M$ is given by a collection of meromorphic functions $f_\alpha$ defined on each chart in an open cover
$\{U_\alpha\}$ of $M$ (if the functions $f_\alpha$ are holomorphic, then $D$ is called effective). The functions $f_\alpha$ are required to be consistent, in the sense that the zeros and poles of $f_\alpha$ and $f_\beta$ agree 
on $U_\alpha \cap U_\beta$, which is equivalent to $f_\alpha/f_\beta$ being a nonvanishing holomorphic function 
on $U_\alpha \cap U_\beta$. 
(Our $D$ correspond to Cartier divisors, which are equivalent to Weyl divisors 
for the nonsingular analytic varieties we consider.)
Analytic hypersurfaces of a complex manifold $M$ are given locally by the vanishing 
of a holomorphic function and globally by an effective divisor.

If $\g$ is a Lie algebra of vector fields on $M$ and $N \subset M$ is a $\g$-invariant hypersurface defined
by the divisor $D= \{f_\alpha\}$, then each vector field of $\g$ is tangent to $N$, implying that  for each $\alpha$
 \[ 
X(f_\alpha) = \lambda_\alpha(X) f_\alpha \qquad \forall X \in \g,
 \]
for some weight $\lambda_\alpha \in \g^* \otimes \mathcal{O}(U_\alpha)$, which is a 1-cocycle in the
Chevalley-Eilenberg complex of $\g$ with coefficients in the $\g$-module $\mathcal{O}(U_\alpha)$ of holomorphic functions on $U_\alpha \subset M$. Such a divisor is called $\g$-invariant.
Multiplying each $f_\alpha$ by nonvanishing holomorphic functions gives a different representative of the same divisor, and the weight $\lambda_\alpha$ is in this case changed by a coboundary,
so the weights can be identified with elements in the Chevalley-Eilenberg cohomology
$\HH^1(\g,\mathcal{O}(U_\alpha))$  or, more precisely, a slightly modified version thereof. A collection of such weights, or multipliers, for each element
of the cover $\{U_\alpha\}$, that are compatible on overlaps, yields a multiplier group
that we will denote $\mathfrak{M}_\g(M)$. Below we will define it in terms of a certain double complex.

As is well known, any divisor $D$ on $M$ gives rise to a line bundle $[D] \to M$, with transition functions 
$g_{\alpha \beta} = f_\alpha/ f_\beta$, on which $f_\alpha$ are local defining functions of a particular section
(and, geometrically, $D$ is the locus of this section).
When $D$ is $\g$-invariant, then there exists a lift of the Lie algebra $\g \subset \vf(M)$ to a Lie algebra 
$\g^\lambda \subset \vf([D])$ defined locally in terms of the weights $\lambda= \{ \lambda_\alpha\}$ of $D$, 
meaning that $([D],\g^\lambda)$ is a $\g$-equivariant line bundle.
Properly localized, the obstruction for such a lift,  and thus for the existence of invariant divisors, belongs in general to the equivariant Picard group $\op{Pic}_\g(M)$.

\subsection{Overview of the novel results}

Due to a close relationship between $\g$-invariant divisors and $\g$-equivariant line bundles,
Section \ref{sect:2.1} starts with an investigation of prerequisites for the latter.
The Picard group $\mathrm{Pic}(M)$, consisting of holomorphic line bundles over $M$ up to equivalence, 
is isomorphic to the \v{C}ech cohomology group $\Hcech^1(M,\mathcal{O}^\times)$.
 
In order to describe the group $\mathrm{Pic}_\g(M)$ of $\g$-equivariant line bundles, we unite the \v{C}ech complex with 
the Chevalley-Eilenberg complex into a double complex $C^{\bullet,\bullet}$. 
The direct limit of the first total cohomology of this complex (also called hypercohomology, cf.\ \cite{Gr}) 
is exactly the desired group: $\mathrm{Pic}_\g(M):=\varinjlim \HH^1(\mathrm{Tot}^\bullet(C))$. 

There exist natural homomorphisms $\Phi_1\colon \op{Pic}_\g(M)\to\op{Pic}(M)$ and 
$\Phi_2\colon \op{Pic}_\g(M)\to\mathfrak{M}_\g(M)$. The image 
of $\varpi := \Phi_1\times\Phi_2$ in $\op{Pic}(M)\times\mathfrak{M}_\g(M)$ defines the reduced Picard group
 \[
\mathrm{Pic}_\g(M)\stackrel{\varpi}\longrightarrow\mathrm{Pic}^{\op{red}}_\g(M)\to 0, 
 \]
 whence a double homomorphism $(\Psi_1,\Psi_2)$
such that $\Psi_i\circ\varpi=\Phi_i$ and $\ker\Psi_1\cap\ker\Psi_2=0$:
 \[\begin{tikzcd}
& &\op{Pic}^{\op{red}}_\g(M)\arrow{ld}[swap]{\Psi_1}\arrow{rd}{\Psi_2} \\
& \op{Pic}(M) & & \mathfrak{M}_\g(M)
 \end{tikzcd}\]
 \begin{theorem} \label{Th1}
 The group $T_\g(M):=\ker(\varpi)$ of equivariant line bundles with trivial reduction is defined by \eqref{eq:TgM} and consists of the global lifts of $\g$ to the trivial line bundle over $M$ that are locally trivial, modulo globally trivial lifts. 
  \end{theorem}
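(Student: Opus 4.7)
The plan is to work at the level of cocycles in the total complex $\mathrm{Tot}^\bullet(C^{\bullet,\bullet})$ and to translate the vanishing of both $\Phi_1$ and $\Phi_2$ into a concrete geometric description of the trivialized lift.

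First I would unpack the data. A class in $\op{Pic}_\g(M)$ is represented by a total 1-cocycle $(g_{\alpha\beta},\lambda_\alpha)$ with $g_{\alpha\beta}\in\mathcal{O}^\times(U_\alpha\cap U_\beta)$ and $\lambda_\alpha\in\g^*\otimes\mathcal{O}(U_\alpha)$, subject to the \v{C}ech cocycle relation for $\{g_{\alpha\beta}\}$, the Chevalley-Eilenberg cocycle relation for each $\lambda_\alpha$, and the mixed compatibility $X(\log g_{\alpha\beta}) = \lambda_\beta(X)-\lambda_\alpha(X)$ that couples the two complexes. The homomorphisms $\Phi_1$ and $\Phi_2$ simply read off the \v{C}ech and Chevalley-Eilenberg components modulo their respective coboundaries.

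For $(g_{\alpha\beta},\lambda_\alpha)\in\ker\varpi$, after passing to a refinement (this is the role of the direct limit in the definition of $\op{Pic}_\g(M)$), the vanishing of $\Phi_1$ gives $g_{\alpha\beta}=h_\beta/h_\alpha$ for some $h_\alpha\in\mathcal{O}^\times(U_\alpha)$. Adding the total coboundary of $\{h_\alpha\}$ — a gauge transformation of the local trivializations — brings the representative to the normal form $(1,\lambda_\alpha')$. The mixed condition now forces $\lambda_\alpha'=\lambda_\beta'$ on every overlap, so these local 1-cochains patch into a globally defined cocycle $\lambda\in \g^*\otimes\mathcal{O}(M)$. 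Geometrically this is the statement that the equivariant bundle has been identified with the trivial line bundle $M\times\C$, on which the lift of $X\in\g$ takes the standard form $\widetilde X = X + \lambda(X)\,t\,\p_t$ in the fiber coordinate $t$.

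Next I would translate the remaining vanishing of $\Phi_2$: it says that on each chart of a possibly further refined cover there exist $f_\alpha\in\mathcal{O}(U_\alpha)$ with $\lambda|_{U_\alpha} = d_{CE} f_\alpha$, which is precisely the statement that $\widetilde\g$ is trivializable as a lift on each $U_\alpha$. Finally, the residual gauge freedom among representatives already in the normal form $(1,\lambda)$ is given by the global coboundaries: the condition $h_\beta/h_\alpha=1$ glues the $h_\alpha$ into a single $h\in\mathcal{O}^\times(M)$, and the coboundary then shifts $\lambda$ by $d_{CE}\log h$, i.e., by the weight of a globally trivial lift. This identifies $T_\g(M)=\ker\varpi$ with the quotient \eqref{eq:TgM} of locally trivial global lifts on $M\times\C$ by globally trivial ones, as claimed.

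The main obstacle I expect is the direct-limit bookkeeping. The witnesses of triviality for $\Phi_1$ and $\Phi_2$ live on possibly different refinements of the cover, and one must verify that in the limit the equivalence relation generated by total coboundaries collapses precisely to the quotient by globally trivial lifts — neither more, because new coboundary relations could in principle appear on finer covers, nor less, because one still wants lifts which are only locally trivial to survive in $T_\g(M)$. Once this point is handled, the rest of the argument is a matter of unwinding the definition of the total cohomology.
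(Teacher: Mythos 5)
Your argument is correct and follows essentially the same route as the paper: the paper computes $\ker(\varpi)$ as $\mathrm{im}(\tilde\partial)/(\mathrm{im}(\tilde\partial)\cap\mathrm{im}(\partial^0))$, identifies this with \eqref{eq:TgM}, and then applies $d^{0,0}$ to read off the interpretation as locally trivial global lifts modulo globally trivial ones, which is exactly what your gauge-fixing to the normal form $(1,\lambda)$ accomplishes. The direct-limit point you flag is handled in one line — the relation $\delta^{0,0}\mu=1$ forces $\mu$ to glue to a global element of $\mathcal{O}^\times(M)$ on any refinement, so no new coboundary relations appear in the limit — and with that remark your proof is complete.
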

When $T_\g(M)=0$, $\Phi_1\times\Phi_2$ embeds $\op{Pic}_\g(M)$ in $\op{Pic}(M)\times\mathfrak{M}_\g(M)$ (Corollary \ref{cor:isomorphic} gives two sufficient conditions for this);
generally the same is true for $\op{Pic}_\g(M)/T_\g(M)$.

The homomorphisms $\Psi_1,\Psi_2$ (and likewise $\Phi_1,\Phi_2$) are neither injective nor surjective, in general.
We will describe $\ker(\Psi_i)$ and $\op{im}(\Psi_i)$ in terms of the iterated cohomology 
of the double complex $C^{\bullet,\bullet}$. In particular, we will show that 
under certain topological conditions, if the isotropy algebra $\g_p$ of a generic point $p\in M$ is perfect,
then $\ker(\Psi_1)=0$ and $\op{Pic}_\g(M)\subset\op{Pic}(M)$.
This is an infinitesimal version of Proposition 1.4 from \cite{M}, which gives 
sufficient conditions for an algebraic group $G$ to admit at most one linearization on any line bundle.
The following statements elaborate on the cases considered in \cite{M} and \cite{FO} respectively.

 \begin{cor}
{\rm (i)} If $\mathfrak{M}_\g(M)=0$ and $T_\g(M)=0$ then $\Phi_1\colon\op{Pic}_\g(M)\to\op{Pic}(M)$ is injective.\\
{\rm (ii)} Likewise, if $\op{Pic}(M)=0$ and $T_\g(M)=0$ then $\Phi_2\colon \op{Pic}_\g(M)\to\mathfrak{M}_\g(M)$ is injective.
\end{cor}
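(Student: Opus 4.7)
The strategy is to recognise this corollary as an almost immediate consequence of the description of $T_\g(M)$ supplied by Theorem \ref{Th1}. Since by construction $\varpi=\Phi_1\times\Phi_2$, its kernel is the joint kernel: $T_\g(M)=\ker(\varpi)=\ker\Phi_1\cap\ker\Phi_2$. The hypothesis $T_\g(M)=0$ thus says that no nonzero element of $\op{Pic}_\g(M)$ is simultaneously killed by $\Phi_1$ and by $\Phi_2$, and each part of the corollary amounts to forcing one of these kernels to be the whole group by a triviality assumption on the target of the other map.

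For part (i), assume $\mathfrak{M}_\g(M)=0$. Then $\Phi_2$ is automatically the zero homomorphism, so $\ker\Phi_2=\op{Pic}_\g(M)$, and I compute
\[
\ker\Phi_1=\ker\Phi_1\cap\op{Pic}_\g(M)=\ker\Phi_1\cap\ker\Phi_2=T_\g(M)=0.
\]
Hence $\Phi_1$ is injective. Part (ii) follows by the symmetric argument, swapping the roles of the two factors.

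A slightly more conceptual reformulation, which I would include as a remark, reads the statement off the short exact sequence
\[
0\to T_\g(M)\to\op{Pic}_\g(M)\stackrel{\varpi}{\longrightarrow}\op{Pic}^{\op{red}}_\g(M)\to 0
\]
provided by Theorem \ref{Th1}. The vanishing of $T_\g(M)$ turns $\varpi$ into an isomorphism onto $\op{Pic}^{\op{red}}_\g(M)\subseteq\op{Pic}(M)\times\mathfrak{M}_\g(M)$, and under this identification $\Phi_i=\Psi_i\circ\varpi$ is the restriction of the $i$-th projection; when the opposite factor is trivial, that projection is itself an isomorphism, hence a fortiori injective. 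There is no genuine obstacle in the proof: the corollary is a formal consequence of the definition of $\varpi$ together with Theorem \ref{Th1}, and the only care needed is the bookkeeping of which factor is being trivialised in which part.
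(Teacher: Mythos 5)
Your proof is correct and follows exactly the route the paper intends: the corollary is stated as a formal consequence of $T_\g(M)=\ker(\varpi)=\ker\Phi_1\cap\ker\Phi_2$ (Theorem \ref{Th1} / Proposition \ref{prop:HtotPic}), so that triviality of one target forces the corresponding kernel to be all of $\op{Pic}_\g(M)$ and the other map to be injective. Both your direct computation and your reformulation via the short exact sequence \eqref{ll3} match the paper's reasoning.
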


In Section \ref{sect:invariantdivisors} we consider the homomorphism
 \[
j_\g\colon\mathrm{Div}_\g(M) \to \mathrm{Pic}_\g(M)
 \]
mapping a $\g$-invariant divisor $D$ with weight $\lambda$ to the $\g$-equivariant line bundle $([D],\g^\lambda)$.
The canonical morphism $j\colon \op{Div}(M)\to\op{Pic}(M)$,  which takes $D$ to $[D]$, is well-understood: its kernel and cokernel are given by exact sequence \eqref{eq:longexactpic};
for smooth projective varieties $j$ is epimorphic and $\op{Pic}(M)$ corresponds to the 
class group $\op{Cl}(M)$ of equivalent divisors, cf.\ \cite{GH}.
In contrast, even in the smooth projective case, the map $j_\g$ is generally neither injective nor surjective.

We will give a necessary criterion for a $\g$-equivariant line bundle
$(L \to M, \hat \g)$, where $\hat \g$ is a lift of $\g$ to $L$, to be the image of a $\g$-invariant divisor, namely that 
generic $\hat \g$-orbits on $L$ project bijectively (in our setup: biholomorphic) to $\g$-orbits on $M$ 
(projection may be non-injective on singular orbits). We call such Lie algebras transversal,
borrowing the terminology from  \cite{AF}, although their notion of transversality was a slightly stronger requirement. 

 \begin{theorem}\label{Th2}
If $D=\{f_\alpha\}$ is a $\g$-invariant divisor and $\lambda=\{\lambda_\alpha\}$ is the corresponding
weight, then the lift $\g^\lambda \subset \vf([D])$ defined by $\lambda$ is transversal.
 \end{theorem}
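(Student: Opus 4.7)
The plan is to exploit the divisor itself as a global gauge transformation that trivializes the vertical part of $\g^\lambda$ over the complement of $\mathrm{supp}(D)$. Once the lift is shown to be purely horizontal on a $\g$-invariant open dense subset of $M$, transversality is a tautology. Explicitly, in the local trivialization $[D]|_{U_\alpha}\cong U_\alpha\times\C$ with fiber coordinate $u_\alpha$ and transitions $u_\alpha=(f_\alpha/f_\beta)u_\beta$, the lift reads
$$\hat X|_{U_\alpha}=X+\lambda_\alpha(X)\,u_\alpha\,\p_{u_\alpha},$$
and the canonical section $s_D$ is $u_\alpha=f_\alpha$. Set $M_0:=M\setminus\mathrm{supp}(D)$; this is open, dense and $\g$-invariant because $\mathrm{supp}(D)$ is a $\g$-invariant hypersurface.

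Over $U_\alpha\cap M_0$ the function $f_\alpha$ is holomorphic and nonvanishing, so $v_\alpha:=u_\alpha/f_\alpha$ is a legitimate fiber coordinate. I would next verify the gluing: the divisor cocycle $f_\alpha=(f_\alpha/f_\beta)f_\beta$ together with the transition for $u_\alpha$ gives $v_\alpha=u_\beta/f_\beta=v_\beta$ on overlaps in $M_0$, so $v$ is a global holomorphic fiber coordinate furnishing a canonical trivialization $[D]|_{M_0}\cong M_0\times\C$. A one-line calculation using $X(f_\alpha)=\lambda_\alpha(X)f_\alpha$ then produces $\hat X(v_\alpha)=\lambda_\alpha(X)v_\alpha-\lambda_\alpha(X)v_\alpha=0$, which says that in the coordinates $(m,v)$
$$\hat X|_{M_0}=X,$$
i.e.\ $\g^\lambda$ is the horizontal lift of $\g$ over $M_0$.

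Transversality now follows at once. The $\hat\g$-orbit through $(m,v_0)\in M_0\times\C$ is exactly $\{(m',v_0):m'\text{ in the }\g\text{-orbit of }m\}$, which projects biholomorphically onto the $\g$-orbit of $m$ in $M_0$. Since $\pi^{-1}(M_0)$ is open, dense and $\hat\g$-invariant in $[D]$, every generic $\hat\g$-orbit lies in $\pi^{-1}(M_0)$ and therefore projects biholomorphically to a $\g$-orbit, which is precisely the transversality condition recalled just above the theorem.

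The only nontrivial point is the bookkeeping in Step 2: one must confirm that the cocycle $g_{\alpha\beta}=f_\alpha/f_\beta$ is exactly what makes the $v_\alpha$ glue and that the invariance relation $X(f_\alpha)=\lambda_\alpha(X)f_\alpha$ is exactly what kills $\hat X(v_\alpha)$. The underlying conceptual content — a $\g$-invariant divisor is a global $\g$-equivariant trivialization of $[D]$ off its own support — is what makes the argument short; no further input is needed, in particular no topological hypotheses on $M$ and no restriction to effective $D$, since meromorphic $f_\alpha$ are holomorphic and nonvanishing on $M_0$ by definition.
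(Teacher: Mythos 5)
Your proof is correct and rests on exactly the same key identity as the paper's (Proposition \ref{prop:orbitdim}): the computation $\hat X(u_\alpha/f_\alpha)=0$, which the paper phrases as ``$u/f_\alpha$ is a meromorphic absolute invariant'' and you phrase as the change of fiber coordinate $v_\alpha=u_\alpha/f_\alpha$ trivializing the lift over $M\setminus\mathrm{supp}(D)$. Your packaging has the minor virtue of making explicit that generic orbits are graphs of constant sections in the $v$-coordinate, hence project biholomorphically (the paper only compares orbit dimensions), but the underlying argument is the same.
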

 
Thus if $\hat\g\subset\vf(L)$ is not transversal, then the $\g$-equivariant line bundle 
$(L\to M,\hat \g)$ is not in $\op{im}(j_\g)$. The condition $(L,\hat\g)\in\op{im}(j_\g)$ restricts not only $\hat \g$, 
but also $L$ via $\op{im}(\Psi_1\circ j_\g)\subset\op{im}(j)$.

The proof of Theorem \ref{Th2} is based on a local argument and is similar to that of \cite[Th.\,3.36]{O}
and \cite[Th.\,5.4]{FO}, where lifts of $\g$ to the trivial bundle are considered. 
It is important to note that in our general setting, contrary to the local regular settings of \cite{FO,O}, 
this criterion is only necessary but not sufficient, which will be illustrated in examples.
Yet, in an algebraic context the converse statement holds true, up to an integer factor for the degree (see Theorem \ref{th:orbitdimR}). 

In Section \ref{sect:Gequivar} we show that the group of $G$-equivariant line bundles can be described 
by a certain Lie group cohomology with coefficients in the sheaf $\mathcal{O}^\times$, which combines the
\v{C}ech cohomology of $\mathcal{O}^\times$ and the continuous Lie group cohomology with coefficients 
in the $G$-module $\mathcal{O}^\times(M)$. This in turn is related to the equivariant Picard group 
$\op{Pic}_G(M)$, studied before in particular situations when $G$ is algebraic or compact.
We also discuss its relation to $\op{Pic}_\g(M)$.

Several examples of computation are spread throughout Section \ref{sect:divisorsandbundles}, 
demonstrating global constraints in the theory of $\g$-invariant divisors and $\g$-equivariant line bundles.
For instance, when $M=\mathbb CP^1$ with the standard coordinate charts $U_0, U_\infty \subset \mathbb CP^1$,
and $\g=\mathfrak{aff}(1,\mathbb C)$ is the 2-dimensional Lie subalgebra of
$\mathfrak{sl}(2,\mathbb C) \subset \vf(M)$, then
 \[
\mathrm{Pic}_\g(U_0) \simeq \HH^1(\g,\mathcal{O}(U_0)) = \mathbb C, \qquad \mathrm{Pic}_\g(U_\infty) \simeq  \HH^1(\g,\mathcal{O}(U_\infty)) = \mathbb C^2.   
 \]
The isomorphism between the group of $\g$-equivariant line bundles and the Chevalley-Eilenberg cohomology group
follows from the fact that all line bundles over $\mathbb C$ are trivial. On $\mathbb CP^1$, on the other hand,
there are only countably many line bundles, namely $\mathcal{O}_{\mathbb CP^1}(k)$ for $k \in \mathbb Z$.
In this case $\mathrm{Pic}_\g(\mathbb CP^1) =\mathbb C \times \mathbb Z$, where
$\mathbb Z=\op{Pic}(\mathbb CP^1)$. However, not all $\g$-equivariant line bundles are of the form $[D]$
for some $\g$-invariant divisor $D$. Instead, as a consequence of the necessary criterion of Theorem \ref{Th2},
we have $\mathrm{Div}_\g(\mathbb CP^1) = \mathbb Z$. For more details, see Example \ref{ex:aff1}.

In Section \ref{sect:bundles} we focus on the important cases of projectable Lie algebras of vector fields 
on affine bundles and on jet bundles. In these situations one can consider divisors whose restriction to fibers 
are polynomial. Let $\hat\g$ be a projectable Lie algebra of vector fields on the total space of an affine bundle 
$\pi\colon E\to M$ that preserves the affine structure on $E$, and let $\g=d\pi(\hat\g)\subset \vf(M)$.

 \begin{theorem}\label{Th3}
If $D$ is a $\hat \g$-invariant polynomial divisor on the affine bundle $E$, then $[D] = \pi^*L$ for some $\g$-equivariant line bundle
$L\in \Phi_1(\op{Pic}_\g(M))$.
 \end{theorem}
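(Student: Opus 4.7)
The plan is to show, working in affine-adapted local trivializations, that both the transition functions of the line bundle $[D]$ and the weight cocycle of the lift $\hat\g^\lambda$ are pulled back from $M$, so that $([D],\hat\g^\lambda)$ is the pullback of some $\g$-equivariant bundle $(L,\g^\mu)$ on $M$. Both descents rely on the same elementary fact: a rational function in the fiber coordinates $\xi\in\mathbb C^k$ that is holomorphic and nowhere vanishing on all of $\mathbb C^k$ must be constant in $\xi$ (a non-constant polynomial vanishes somewhere by the fundamental theorem of algebra restricted to a generic line, and holomorphy of a rational function on all of $\mathbb C^k$ forces its denominator in reduced form to be such a nowhere-vanishing polynomial).

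First I would cover $M$ by opens $\{V_\alpha\}$ that simultaneously trivialize the affine bundle $E$ and the divisor $D$, with affine fiber coordinates $\xi$ on $U_\alpha=\pi^{-1}(V_\alpha)\cong V_\alpha\times\mathbb C^k$, so that $D$ is presented by $f_\alpha\in\mathcal O(V_\alpha)[\xi]$. Then $g_{\alpha\beta}=f_\alpha/f_\beta$ is rational in $\xi$ with coefficients in $\mathcal O(V_\alpha\cap V_\beta)$, and holomorphic and nowhere vanishing on $U_\alpha\cap U_\beta$. The principle above, applied fiberwise, forces $g_{\alpha\beta}$ to be constant in $\xi$, so $g_{\alpha\beta}=\pi^*h_{\alpha\beta}$ for some $h_{\alpha\beta}\in\mathcal O^\times(V_\alpha\cap V_\beta)$. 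The cocycle $\{h_{\alpha\beta}\}$ defines a line bundle $L\to M$ with $\pi^*L=[D]$.

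For the weights, projectability plus preservation of the affine structure force each $\hat X\in\hat\g$ to have the local form
\[
\hat X=\sum_k c_k(x)\,\partial_{x_k}+\sum_i\Bigl(a_i(x)+\sum_j b_{ij}(x)\,\xi_j\Bigr)\partial_{\xi_i},
\]
whose horizontal coefficients are pulled back from $M$ and whose vertical part is affine linear in $\xi$. Such a $\hat X$ preserves the fiber-degree filtration on polynomials in $\xi$, so $\hat X(f_\alpha)/f_\alpha$ is again rational in $\xi$ and holomorphic on the fibers of $\pi$; by the same principle it is constant in $\xi$, i.e.\ $\lambda_\alpha(\hat X)=\pi^*\mu_\alpha(\hat X)$ with $\mu_\alpha(\hat X)\in\mathcal O(V_\alpha)$. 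The main obstacle is to verify that $\mu_\alpha$ descends along $d\pi\colon\hat\g\to\g$, i.e.\ depends only on $X=d\pi(\hat X)$, so that the collection $\{\mu_\alpha\}$ assembles into a genuine $\g$-weight cocycle rather than merely a $\hat\g$-cocycle; this amounts to showing that any vertical element of $\hat\g$ acts with zero weight after fiber descent, which is forced by the interaction of the polynomial-divisor hypothesis with the affine-linear vertical flow acting on the top-degree part of $f_\alpha$. Once this is established, $\{\mu_\alpha\}$ defines a lift of $\g$ to $L$ whose pullback recovers $\hat\g^\lambda$ on $\pi^*L=[D]$, exhibiting $L\in\Phi_1(\op{Pic}_\g(M))$.
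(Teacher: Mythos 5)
Your two descent arguments --- that the transition functions $g_{\alpha\beta}=f_\alpha/f_\beta$ and the weights $\lambda_\alpha(\hat X)=\hat X(f_\alpha)/f_\alpha$, being ratios of fiberwise polynomials that are holomorphic (and, for the transition functions, nonvanishing) on entire affine fibers $\C^r$, must be constant along the fibers --- are exactly the paper's proof, i.e.\ Propositions \ref{prop:13} and \ref{prop:aff}. One small caveat: for the weight the ``nowhere vanishing'' principle is not available, since $\lambda_\alpha(\hat X)$ may vanish; what actually forces fiber-constancy is the degree bound $\deg_u \hat X(f_\alpha)\le \deg_u f_\alpha$ coming from the affine-linear form of $\hat X$, which you do invoke via the degree filtration, so this part is fine.

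The gap is in your final step. You are right that, as stated, the theorem only assumes $\g=d\pi(\hat\g)$, so $d\pi|_{\hat\g}$ may have a kernel and the fiber-constant weight $\mu_\alpha$ is a priori only a cocycle on $\hat\g$; some argument is needed to produce a genuine $\g$-weight on $L$. But your proposed resolution --- that every vertical element of $\hat\g$ has zero weight after fiber descent --- is false. Take $E=M\times\C$ with fiber coordinate $u$, the vertical field $\hat Z=u\partial_u\in\hat\g$ (which is projectable and preserves the affine structure), and the invariant polynomial divisor $f=u$: then $\hat Z(f)=1\cdot f$, so the descended weight of $\hat Z$ is the constant $1$, not $0$, and no appeal to the top-degree part of $f_\alpha$ changes this. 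So the step as written would fail whenever $\ker(d\pi|_{\hat\g})$ contains such a field. For what it is worth, the paper's own proof does not confront this issue either: it writes $\tilde\lambda_\alpha$ as a function of $X\in\hat\g$ and directly calls $(\{\tilde g_{\alpha\beta}\},\{\tilde\lambda_\alpha\})$ a $\g$-equivariant bundle, which is immediate only when $d\pi|_{\hat\g}$ is injective (as it is in the jet-bundle applications, where prolongation is an isomorphism). So you have identified a genuine subtlety that the paper glosses over, but the lemma you invoke to dispose of it is wrong; either restrict to the case $\ker(d\pi|_{\hat\g})=0$, or supply a different argument for why $L$ still admits some $\g$-lift.
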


In other words, the $\hat\g$-equivariant line bundle over $E$ corresponding to a $\hat \g$-invariant polynomial divisor is the pullback of a $\g$-equivariant line bundle over $M$. The same idea works for jet bundles
because the bundle $\pi_{k+1,k}\colon \J^{k+1}\to\J^k$ for $k\geq1$ has a natural affine structure in fibers.
(For jet spaces of sections of line bundles with the contact transformation algebra, the natural affine structure 
in fibers starts at $k=2$, with the corresponding modification of the claim.)

 \begin{theorem}\label{Th4}
 Let $\g^{(k)} \subset \vf(\J^k)$ be the prolongation of a Lie algebra $\g$ of point transformations on $\J^0$,
$0<k\leq\infty$. If $D$ is a $\g^{(k)}$-invariant divisor that is polynomial in fibers of 
$\pi_{k,1}\colon \J^k \to \J^1$, then $[D] = \pi_{k,1}^*L$ for some $\g^{(1)}$-equivariant line bundle 
 $L \in \Phi_1(\mathrm{Pic}_{\g^{(1)}}(\J^1))$.
 \end{theorem}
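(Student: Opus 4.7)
The plan is to argue by induction on $k$, using Theorem~\ref{Th3} to descend one level at a time through the tower $\pi_{i+1,i}\colon\J^{i+1}\to\J^{i}$, $i\geq 1$. The base case $k=1$ is immediate: take $L=[D]$. For the induction step $k\geq 2$, note that $\pi_{k,1}=\pi_{k-1,1}\circ\pi_{k,k-1}$, the projection $\pi_{k,k-1}$ is an affine bundle with fiber a finite-dimensional complex vector space, and standard properties of prolongation of point transformations ensure that $\g^{(k)}$ is $\pi_{k,k-1}$-projectable onto $\g^{(k-1)}$ and acts by affine transformations on the fibers of $\pi_{k,k-1}$. The case $k=\infty$ reduces to the finite case, since any polynomial divisor on $\J^\infty$ depends on only finitely many jet variables and thus descends to some $\J^N$.

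Since the local defining functions of $D$ are polynomials in the jet variables of orders $2,\ldots,k$, they are in particular polynomial in the top-order jets, that is, in fibers of $\pi_{k,k-1}$. Theorem~\ref{Th3} therefore yields a $\g^{(k-1)}$-equivariant line bundle $L_{k-1}\in\Phi_1(\op{Pic}_{\g^{(k-1)}}(\J^{k-1}))$ such that $[D]=\pi_{k,k-1}^*L_{k-1}$.

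To apply the induction hypothesis, I represent $L_{k-1}$ by a polynomial divisor $D_{k-1}$ on $\J^{k-1}$. Choose a cover $\{U_\alpha=\pi_{k,k-1}^{-1}(W_\alpha)\}$ of $\J^k$ subordinate to a trivializing cover $\{W_\alpha\}$ of $L_{k-1}$. The isomorphism $[D]\cong\pi_{k,k-1}^*L_{k-1}$ allows one to choose local defining functions $\tilde f_\alpha$ of $D$ so that the transition ratios $\tilde f_\alpha/\tilde f_\beta$ are pullbacks from $\J^{k-1}$. After, if needed, an affine change of the zero section of $\pi_{k,k-1}$ so that $D$ meets each fiber properly, set $g_\alpha(v,\xi'):=\tilde f_\alpha(v,\xi',0)$, where $\xi$ denotes the top-order jets and $\xi'$ the intermediate jets. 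The $g_\alpha$ are local defining functions of a $\g^{(k-1)}$-invariant divisor $D_{k-1}$ with $[D_{k-1}]=L_{k-1}$, polynomial in fibers of $\pi_{k-1,1}$ because the $\tilde f_\alpha$ are polynomial in fibers of $\pi_{k,1}$. The induction hypothesis then gives $L_{k-1}=\pi_{k-1,1}^*L$ for some $\g^{(1)}$-equivariant $L\in\Phi_1(\op{Pic}_{\g^{(1)}}(\J^1))$, so that $[D]=\pi_{k,k-1}^*\pi_{k-1,1}^*L=\pi_{k,1}^*L$, completing the induction.

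The main obstacle I anticipate lies in the intermediate step: one must verify that the isomorphism $[D]\cong\pi_{k,k-1}^*L_{k-1}$ can be realized by rescalings of the $f_\alpha$ which preserve polynomiality in fibers of $\pi_{k,1}$. Without this control, the restriction to the zero section may fail to yield a polynomial divisor on $\J^{k-1}$, and the induction stalls. This ought to follow from a refinement of the argument underlying Theorem~\ref{Th3}: among all nowhere-vanishing rescaling factors, $\g^{(k)}$-equivariance together with the polynomial structure restrict the admissible choices to factors depending only on the lower-order jets, which is precisely what allows the restriction to the zero section to define $D_{k-1}$.
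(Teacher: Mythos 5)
Your reduction of the case $k=\infty$ to finite $k$, and your single application of Theorem~\ref{Th3} to the affine bundle $\pi_{k,k-1}$ (for which $\g^{(k)}$ indeed projects onto $\g^{(k-1)}$ and acts affinely on fibers), are both fine; in particular your argument already proves the case $k=2$. The gap is in the induction step for $k\geq 3$. Theorem~\ref{Th3} hands you a $\g^{(k-1)}$-\emph{equivariant line bundle} $L_{k-1}$ on $\J^{k-1}$, not a $\g^{(k-1)}$-invariant \emph{divisor}, and the induction hypothesis only applies to the latter. Your attempt to bridge this by restricting the defining functions $\tilde f_\alpha$ to a section $s$ of $\pi_{k,k-1}$ fails: writing $X^{(k)}=X^{(k-1)}+\sum_{|\sigma|=k}b^j_\sigma\,\partial_{u^j_\sigma}$ and restricting the identity $X^{(k)}(\tilde f_\alpha)=\lambda_\alpha(X)\tilde f_\alpha$ to $s$ gives
\[
X^{(k-1)}(g_\alpha)=\lambda_\alpha(X)\big|_s\,g_\alpha-\sum_{|\sigma|=k} b^j_\sigma\big|_s\,\bigl(\partial_{u^j_\sigma}\tilde f_\alpha\bigr)\big|_s ,
\]
and the last sum is not proportional to $g_\alpha$ unless $\tilde f_\alpha$ is already independent of the top-order jets. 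Since no $\g^{(k)}$-invariant section of $\pi_{k,k-1}$ exists in general, restriction destroys invariance, so $\{g_\alpha\}$ is not a $\g^{(k-1)}$-invariant divisor (on top of the issue, which you partly flag, that $D$ may contain the section so that some $g_\alpha\equiv 0$). Worse, no fix of this particular construction can work in general, because the paper stresses that $j_{\g}$ is not surjective: $L_{k-1}$ need not be representable by \emph{any} invariant divisor on $\J^{k-1}$, so the induction cannot be run at the level of divisors.

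The paper avoids induction entirely and argues in one step on $\pi_{k,1}$: the cancellation of polynomial parts in $f_\alpha/f_\beta$ shows the transition functions are pullbacks from $\J^1$, and the prolongation formula \eqref{eq:ProlongationFormula} shows that $X^{(k)}$ raises the \emph{weighted} degree (counting only jet variables of order $\geq 2$) by at most one, so that $\lambda_\alpha(X)=X^{(k)}(f_\alpha)/f_\alpha$, being holomorphic, must be a polynomial of weighted degree $\leq 1$, i.e.\ the pullback of a function on $\J^1$. If you want to keep a descent-style argument, the object you must carry through the induction is the pair (transition functions, weights) of the equivariant line bundle together with a degree bound, not a divisor; but once you set that up you are essentially reproving the weighted-degree estimate, so the direct argument is both necessary and simpler.
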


This result provides our main application for classification of global relative invariants 
of the prolonged $\g$-action on $\J^\infty$,  which is an essential step in the classification of all invariant differential equations  (see \cite{KS} for a series of examples of this technique).
We note that while the Gelfand-Fuks type cohomology 
$H^1(\g^{(\infty)},\mathcal{F}(\J^\infty))$ may be large and hard to compute, 
the theorem reduces the problem to finite dimensions.
To illustrate this, we will show how this allows to effectively treat relative differential invariants 
of curves in $\mathbb CP^2$ under the action of the M\"obius algebra of projective transformations 
as well as relative differential invariants of second-order ODEs under the infinite-dimensional Lie algebra 
of point transformations.

The main results are proved and expanded in the following sections. To be precise, Theorem~\ref{Th1} corresponds to Propositions \ref{prop:HtotPic} and \ref{prop:TgM}, Theorem \ref{Th2} to Propositions \ref{prop:tobundle} and \ref{prop:orbitdim}, Theorem \ref{Th3} to Propositions \ref{prop:13} and \ref{prop:aff}, and Theorem \ref{Th4} is an instance of results summarized in Propositions \ref{prop:15}, \ref{prop:polynomialonjets} and \ref{prop:affjet}. Other results are presented in the main text, in particular Theorem \ref{th:orbitdimR} which is a partial converse to Theorem \ref{Th2} in the case of algebraic group actions. We end  with examples that illustrate computations of global relative differential invariants using our formalism.

In this paper we concentrate on the complex analytic and complex algebraic situation, using notation 
$\C P^n$ instead of $\mathbb P^n$ to stress that a part of our results extend to the real analytic and 
real algebraic case, with examples like real projective spaces $\R P^n$, real jet spaces $\J^\infty$, etc. 
In particular, examples A-C may be treated in the real context.

\section{Analytic invariant divisors and equivariant line bundles}\label{sect:divisorsandbundles}

Let $\g \subset \vf(M)$ denote a Lie algebra of holomorphic vector fields on the complex manifold $M$. 
 For $M= \mathbb C^n$ it is well-known that lifts of a Lie algebra $\g$ to the trivial line bundle $M \times \mathbb C$ are 
parametrized by the Chevalley-Eilenberg cohomology $\HH^1(\mathfrak g, \mathcal{O}(M))$. 
This is also the space, where weights of relative $\g$-invariants take values. We refer to \cite{CE,F} for 
the general Lie algebra cohomology theory, to \cite{COW} for its relation to relative (differential) invariants,
and to \cite{FO,S} for a relation to lifts.

The goal of this section is to generalize these results to arbitrary holomorphic line bundles over complex manifolds, and replace the notion of relative $\g$-invariant functions with $\g$-invariant divisors on $M$. 

\subsection{Picard group and multipliers}\label{sect:2.1}

Let us start with a quick overview of holomorphic line bundles, sufficient for our purpose (see \cite{GH,H}).  
For an open subset $U \subset M$ denote by $\mathcal{O}(U)$ the space of holomorphic functions on $U$, 
and by $\mathcal{O}^\times(U)$ the subspace of nonvanishing functions. 
The corresponding sheaves on $M$ are denoted by $\mathcal{O}$ and $\mathcal{O}^\times$, respectively.  
Let $\pi \colon L \to M$ be a line bundle and consider an open cover 
$\mathcal{U} = \{U_\alpha\}$ of $M$ that trivializes $\pi$, i.e., 
$\pi^{-1}(U_\alpha) \simeq U_\alpha \times \mathbb C$. The line bundle is uniquely determined 
by its transition functions $g_{\alpha\beta} \in \mathcal{O}^\times (U_\alpha \cap U_\beta)$, which
satisfy $ g_{\alpha \beta} g_{\beta \gamma} = g_{\alpha \gamma}$.
Two collections of transition functions $\{g_{\alpha \beta}\}$, $\{\tilde g_{\alpha \beta}\}$ 
define the same bundle if and only if $\tilde g_{\alpha \beta} = \frac{f_\alpha}{f_\beta} g_{\alpha \beta}$ for 
some functions $f_\alpha \in \mathcal{O}^\times(U_\alpha)$. 

This leads to a description of line bundles in terms of \v{C}ech cohomology. Define the complex
 \[ 
0 \xrightarrow{} \prod_{\alpha} \mathcal{O}^\times (U_\alpha) \xrightarrow{\delta^0} \prod_{\alpha \neq \beta} \mathcal{O}^\times(U_\alpha \cap U_\beta) \xrightarrow{\delta^1} \prod_{\alpha \neq \beta \neq \gamma \neq \alpha}\mathcal{O}^\times(U_\alpha \cap U_\beta \cap U_\gamma) \xrightarrow{} \cdots,
 \]
with differentials given by
 \begin{equation*}
(\delta^q \mu)_{\alpha_0 \cdots \alpha_{q+1}} = \prod_{i=0}^{q+1} \mu^{(-1)^{i+1}}_{\alpha_0 \cdots \hat \alpha_i \cdots \alpha_{q+1}} \Big|_{U_{\alpha_0} \cap \cdots \cap U_{\alpha_{q+1}}}, \qquad \mu = \{\mu_{\alpha_0 \cdots \alpha_q}\} \in \prod_{\alpha_0,\dots, \alpha_q} \mathcal{O}^\times(U_{\alpha_0} \cap \cdots \cap U_{\alpha_q}).
 \end{equation*}
In particular, $\delta^0$ and $\delta^1$ are defined in the following way:
 \begin{align*}
	(\delta^0\mu)_{\alpha \beta} &= \mu_{\alpha}/\mu_{\beta}, \qquad && \mu = \{\mu_\alpha\} \in \prod_{\alpha} \mathcal{O}^\times (U_\alpha),\\
	(\delta^1\nu)_{\alpha \beta \gamma} &=\frac{ \nu_{\alpha \gamma}}{ \nu_{\alpha \beta}  \nu_{\beta \gamma}} , \qquad && \nu = \{\nu_{\alpha \beta}\} \in \prod_{\alpha \neq \beta} \mathcal{O}^\times(U_\alpha \cap U_\beta).
 \end{align*}
The first \v{C}ech-cohomology with respect to the fixed open cover $\mathcal{U}$, 
defined by $\Hcech^1(\mathcal{U}, \mathcal{O}^\times) = \ker(\delta^1)/\mathrm{im}(\delta^0)$,
is the group of transition functions on $\mathcal{U}$ modulo the above equivalence relation. 

The Picard group $\mathrm{Pic}(M)$ of equivalence classes of holomorphic line bundles over $M$ can be 
described in terms of this cohomology group as follows:
 \begin{itemize}
\item If all line bundles are trivializable on the open charts in $\mathcal{U}$ (for instance, each $U_\alpha$ 
is biholomorphic to a polydisc with a possible factor $\mathbb C^\times$) 
then $\mathrm{Pic}(M) \simeq \Hcech^1(\mathcal{U},\mathcal{O}^\times)$. 
\item In general $\mathrm{Pic}(M) \simeq \Hcech^1(M,\mathcal{O}^\times):= 
\varinjlim \Hcech^1(\mathcal{U}, \mathcal{O}^\times)$ is the direct limit as $\mathcal{U}$ becomes finer.
 \end{itemize}
In both cases, the identification is a group isomorphism.  In particular, if the conditions of Leray's theorem hold, the first description is applicable (see \cite[p.40]{GH} or the simpler Theorem 12.8 of \cite{Forster}, which will usually be sufficient for us). 


 \begin{definition}
A lift of $\mathfrak g \subset \vf(M)$ to the line bundle $\pi \colon L \to M$ is a Lie algebra 
$\hat{\mathfrak g} \subset \vf_{\text{proj}}(L)$ of projectable vector fields, such that 
$d\pi\colon\hat{\mathfrak g}\to\mathfrak g$ is a Lie algebra isomorphism and $\hat{\g}$ commutes 
with the natural vertical vector field $u \partial_u$ ($u$ is a linear fiber coordinate).  
The pair $(\pi, \hat \g)$ is called a $\g$-equivariant line bundle.
(We also refer to $\pi$ or $L$ as a $\g$-equivariant bundle when a lift exists.)
 \end{definition}

For instance, the canonical line bundle $K_M = \Lambda^{\dim M} T^*M$ (see \cite[Ch. 2.2]{H}) always 
admits a canonical lift of $\g \subset \vf(M)$. Thus it is an (often nontrivial) $\g$-equivariant line bundle.

In general, the lift of a vector field $X\in\g$ can be defined on $\pi^{-1}(U_\alpha)\simeq U_\alpha\times\C$ by  
 \[
\hat X|_{U_\alpha} = X|_{U_\alpha} + \lambda_\alpha(X) u\partial_u,\quad 
\lambda_\alpha \in \g^* \otimes \mathcal{O}(U_\alpha),
 \] 
similar to formula (4.1) in \cite{FO}. To simplify notation, we will write $X$ instead of $X|_{U_\alpha}$ 
when there is no room for confusion. The condition $[\hat X, \hat Y] = \widehat{[X,Y]}$ for each $X,Y\in\g$ 
implies that $\lambda_\alpha$ satisfies
 \begin{equation}
X(\lambda_\alpha(Y))-Y(\lambda_\alpha(X))= \lambda_{\alpha}([X,Y]), \qquad \forall X \in \mathfrak g. \label{eq:cocycle1}
 \end{equation}
Changing the coordinate function on the fiber, $v=e^{\mu_\alpha}u$ for some function 
$\mu_\alpha \in \mathcal{O}(U_\alpha)$, gives
 \begin{equation*}
X+\lambda_\alpha(X)\,u \partial_u = X+(\lambda_\alpha(X)+X(\mu_\alpha))\,v \partial_v.
 \end{equation*}
In this sense, two lifts $\lambda_\alpha, \tilde \lambda_\alpha$ on $U_\alpha$ are equivalent if and only if there exists a $\mu_\alpha$ satisfying
 \begin{equation}
\tilde \lambda_\alpha(X) = \lambda_\alpha(X) + X( \mu_\alpha), \qquad  \forall X \in \mathfrak g. \label{eq:coboundary1}
 \end{equation}

The conditions \eqref{eq:cocycle1} and \eqref{eq:coboundary1} can be interpreted in terms of Lie algebra cohomology of $\mathfrak g$ with coefficients in the $\mathfrak g$-module $\mathcal{O}(U_\alpha)$. Consider the Chevalley-Eilenberg complex
 \[ 
0 \xrightarrow{} \mathcal{O}(U_\alpha)  \xrightarrow{d^0} \g^*\otimes \mathcal{O}(U_\alpha) \xrightarrow{d^1} \Lambda^2 \g^*\otimes \mathcal{O}(U_\alpha) \xrightarrow{} \cdots 
 \]
where the maps $d^0$ and $d^1$ are given by
 \begin{alignat*}{2}
(d^0 \mu_\alpha)(X) &= X(\mu_\alpha), \qquad &&\mu_\alpha \in \mathcal{O}(U_\alpha), \\
(d^1 \lambda_\alpha)(X,Y) &= X(\lambda_\alpha(Y))-Y(\lambda_\alpha(X))-\lambda_\alpha([X,Y]), \qquad  &&\lambda_\alpha  \in \g^* \otimes \mathcal{O}(U_\alpha),
 \end{alignat*}
for $X,Y \in \g$ (see \cite{CE}). Notice that $\mathrm{Hom}(\g,F)=\g^* \otimes F$ when one of the factors is finite-dimensional. If both factors are infinte-dimensional, a completion of the tensor product is required. We omit this from the notation, understanding by default that $\Lambda^i \g^*\otimes F$ may stand for $\mathrm{Hom}(\Lambda^i \g^*,F)$ here and below.

Define the cohomology groups
 \[
\HH^0(\mathfrak g,\mathcal{O}(U_\alpha)) = \ker(d^0), \qquad \HH^i(\mathfrak g, \mathcal{O}(U_\alpha)) = \ker(d^i)/\mathrm{im}(d^{i-1}), \quad i>0.
 \]
It is clear that $\lambda_\alpha \in \mathfrak g^* \otimes \mathcal{O}(U_\alpha)$ defines a lift of $\mathfrak g$ 
to $U_\alpha \times \mathbb C$ if and only if $d^1 \lambda_\alpha=0$. Furthermore, two cocycles 
$\lambda_\alpha, \tilde \lambda_\alpha$ define equivalent lifts if and only if 
$\tilde \lambda_\alpha = \lambda_\alpha + d^0 \mu_\alpha$ for some $\mu_\alpha \in \mathcal{O}(U_\alpha)$. 
Thus, equivalence classes of lifts of $\mathfrak g|_{U_\alpha}$ to $U_\alpha \times \mathbb C$ are in 
one-to-one correspondence with elements in $\HH^1(\mathfrak g, \mathcal{O}(U_\alpha))$. 
(Note also that $\HH^0(\g, \mathcal{O}(U_\alpha))=\mathcal{O}(U_\alpha)^\g$ consists of $\g$-invariants.)

\begin{remark} \label{rk:complex}
If $U_\alpha$ is a polydisc for each $\alpha$, then any function in $\mathcal{O}^\times (U_\alpha)$ is of 
the form $e^\mu$, and the argument above works. If $U_\alpha$ is a general open set, one replaces 
$e^{\mu_\alpha} f_\alpha$ with $\mu_\alpha f_\alpha$, where $\mu_\alpha \in \mathcal{O}^\times(U_\alpha)$. 
Then the local lifts are in one-to-one correspondence with elements in the cohomology group of the complex
 \begin{equation}\label{CEg}
0 \xrightarrow{} \mathcal{O}^\times (U_\alpha)  \xrightarrow{d^0 \log} \g^*\otimes \mathcal{O}(U_\alpha) \xrightarrow{d^1} \Lambda^2 \g^*\otimes \mathcal{O}(U_\alpha) \xrightarrow{} \cdots. 
 \end{equation}
We will use this slightly modified complex below with the notation 
 \[
\Hmod^1(\g,\mathcal{O}(U_\alpha)) = \frac{\ker(d^1)}{\mathrm{im}(d^0 \log)}.
 \]
\end{remark}


 

Elements in 
$\Hmod^1(\g,\mathcal{O}(U_\alpha))$ yield local lifts of $\g$ to $\pi^{-1}(U_\alpha)$ that may not glue together to a global lift on $L$. 
On $U_\alpha \cap U_\beta$ a lift is given by both $X+\lambda_\alpha(X)\,u_\alpha \partial_{u_\alpha}$ 
and $X+\lambda_\beta(X)\,u_\beta\partial_{u_\beta}$. The fiber coordinates relate on overlaps by
$u_\alpha = g_{\alpha\beta} u_\beta$, where the transition functions $\{g_{\alpha \beta}\}$ 
represent an element of $\Hcech^1(\mathcal{U},\mathcal{O}^\times(M))$.  Thus 
$X+\lambda_\alpha(X)\,u_\alpha \partial_{u_\alpha}$ becomes 
$X+(\lambda_\alpha(X)-X(g_{\alpha \beta})/g_{\alpha \beta})\,u_\beta \partial_{u_\beta}$, 
resulting in the following compatibility condition on $U_\alpha \cap U_\beta$:
 \begin{equation}\label{eq:compatibility}
\lambda_\alpha(X)-\lambda_\beta(X)=\frac{X(g_{\alpha \beta})}{g_{\alpha \beta}} = X(\log g_{\alpha \beta}), \qquad \forall X \in \mathfrak g.
 \end{equation}

\subsection{A double complex} \label{sect:double}

To better understand the compatibility condition, consider the double complex
\[\begin{tikzcd}
	{C^{0,0}} & {C^{1,0}} & {C^{2,0}} & {} \\
	{C^{0,1}} & {C^{1,1}} & {C^{2,1}} & {} \\
	{C^{0,2}} & {C^{1,2}} & {C^{2,2}} & {} \\
	{} & {} & {}
	\arrow["{\delta^{0,0}}"', from=1-1, to=2-1]
	\arrow["{\delta^{0,1}}"', from=2-1, to=3-1]
	\arrow["{d^{0,0}}", from=1-1, to=1-2]
	\arrow["{d^{1,0}}", from=1-2, to=1-3]
	\arrow["{d^{0,1}}", from=2-1, to=2-2]
	\arrow["{d^{1,1}}", from=2-2, to=2-3]
	\arrow["{d^{0,2}}", from=3-1, to=3-2]
	\arrow["{\delta^{1,0}}"', from=1-2, to=2-2]
	\arrow["{\delta^{1,1}}"', from=2-2, to=3-2]
	\arrow["{\delta^{2,0}}"', from=1-3, to=2-3]
	\arrow["{\delta^{0,2}}"', from=3-1, to=4-1]
	\arrow["{\delta^{1,2}}"', from=3-2, to=4-2]
	\arrow["{d^{1,2}}", from=3-2, to=3-3]
	\arrow["{\delta^{2,1}}"', from=2-3, to=3-3]
	\arrow["{\delta^{2,2}}"', from=3-3, to=4-3]
	\arrow["{d^{2,0}}", from=1-3, to=1-4]
	\arrow["{d^{2,1}}", from=2-3, to=2-4]
	\arrow["{d^{2,2}}", from=3-3, to=3-4]
\end{tikzcd}\]
where $C^{p,q}$ are given by
 \begin{align*}
C^{0,q} &= \prod_{\alpha_0, \cdots, \alpha_q} \mathcal{O}^\times (U_{\alpha_0} \cap \cdots\cap  U_{\alpha_q}),\\
C^{p,q} &= \prod_{\alpha_0, \cdots, \alpha_q} \Lambda^p \mathfrak g^* \otimes \mathcal{O}(U_{\alpha_0} \cap \cdots\cap  U_{\alpha_q}), \quad p \geq 1,
 \end{align*}
and the differentials $\delta^{p,q} \colon C^{p,q} \to C^{p,q+1}$ and $d^{p,q} \colon C^{p,q} \to C^{p+1,q}$ are defined for $p=0$ by
 \begin{align*}
(\delta^{0,q} \mu)_{\alpha_0 \cdots \alpha_{q+1}} &= \prod_{i=0}^{q+1} \mu_{\alpha_0 \cdots \hat \alpha_i \cdots \alpha_{q+1}}^{(-1)^{i+1}} \Big|_{U_{\alpha_0} \cap \cdots \cap U_{\alpha_{q+1}}}, \\
(d^{0,q}\mu_{\alpha_0 \cdots \alpha_q})(X) &= X(\log \mu_{\alpha_0 \cdots \alpha_q}) = \frac{X(\mu_{\alpha_0 \cdots \alpha_q})}{\mu_{\alpha_0 \cdots \alpha_q}},
 \end{align*}
and for $p > 0$ by
 \begin{align*}
(\delta^{p,q} \mu)_{\alpha_0 \cdots \alpha_{q+1}} &= \sum_{i=0}^{q+1} (-1)^{i+1} \mu_{\alpha_0 \cdots \hat \alpha_i \cdots \alpha_{q+1}} \Big|_{U_{\alpha_0} \cap \cdots \cap U_{\alpha_{q+1}}}, \\
(d^{p,q}\mu_{\alpha_0 \cdots \alpha_q})(X_0, \dots, X_p) &=\sum_{i=0}^{p} (-1)^i X_i(\mu_{\alpha_0 \cdots \alpha_q}(X_0,\dots, X_{i-1}, X_{i+1}, \dots, X_p))\\
+\sum_{i<j}(-1)^{i+j}&(\mu_{\alpha_0 \cdots \alpha_q}([X_i,X_j],X_0,\dots, X_{i-1}, X_{i+1}, \dots, X_{j-1}, X_{j+1}, \dots, X_p)).
 \end{align*}

We will sometimes write $C^{p,q}(\g,\mathcal{U})$ for precision when there would otherwise be ambiguity. 
The horizontal lines ($q$ fixed) are nearly Chevalley-Eilenberg complexes of $\mathfrak g$ with coefficients in the 
$\mathfrak g$-modules $\mathcal{O}(U_\alpha), \mathcal{O}(U_{\alpha} \cap U_{\beta})$, etc; 
however $(C^{0,q},d^{0,q})$ are adjusted in accordance with Remark \ref{rk:complex}. 
The vertical lines ($p$ fixed) are \v{C}ech complexes with respect to the open cover $\mathcal{U}$.

 \begin{remark}
For $C^{0,q}$ it is natural to use multiplicative notation (with identity element $1$) while for $C^{p,q}$ for $p \geq 1$ it is better to use additive notation (with identity element $0$). Using these notations consistently becomes difficult when we are dealing with this double complex, and even more so when we work with the total complex defined below. We will therefore use $0$ to denote the identity element in these groups, and in the corresponding cohomology groups.
 \end{remark}

The total complex corresponding to the double complex $C^{\bullet, \bullet}$ is defined as follows:
\[\mathrm{Tot}^r(C) = \prod_{p+q=r} C^{p,q}, \qquad \partial^r=\sum_{p+q=r} (d^{p,q}+(-1)^p\delta^{p,q})\colon \mathrm{Tot}^r(C) \to \mathrm{Tot}^{r+1}(C).\]
The identity $\partial^{i+1} \circ \partial^i =0$ expresses the fact that the double complex is a commutative diagram. The cohomology groups of the total complex are defined in the usual way:
\[\HH^0(\mathrm{Tot}^\bullet(C))= \ker(\partial^0), \qquad \HH^i(\mathrm{Tot}^\bullet(C))= \frac{\ker(\partial^i)}{\mathrm{im}(\partial^{i-1})}.\]

The double complex also gives us several complexes of cohomology groups. The cohomology groups with respect to $d^{i,j}$ (with $j$ fixed) make up the following complexes:
\[\begin{tikzcd}
	{\HH_d^0(C^{\bullet,0})} & {\HH_d^1(C^{\bullet,0})} & {\HH_d^2(C^{\bullet,0})} \\
	{\HH_d^0(C^{\bullet,1})} & {\HH_d^1(C^{\bullet,1})} & {\HH_d^2(C^{\bullet,1})} \\
	{\HH_d^0(C^{\bullet,2})} & {\HH_d^1(C^{\bullet,2})} & {\HH_d^2(C^{\bullet,2})} \\
	{} & {} & {}
	\arrow["{\delta_*^{0,0}}", from=1-1, to=2-1]
	\arrow["{\delta_*^{0,1}}", from=2-1, to=3-1]
	\arrow["{\delta_*^{1,0}}", from=1-2, to=2-2]
	\arrow["{\delta_*^{1,1}}", from=2-2, to=3-2]
	\arrow["{\delta_*^{2,0}}", from=1-3, to=2-3]
	\arrow["{\delta_*^{2,1}}", from=2-3, to=3-3]
	\arrow["{\delta_*^{1,2}}", from=3-2, to=4-2]
	\arrow["{\delta_*^{0,2}}", from=3-1, to=4-1]
	\arrow["{\delta_*^{2,2}}", from=3-3, to=4-3]
\end{tikzcd}\]
Simultaneously, the cohomology groups with respect to $\delta^{i,j}$ (with $i$ fixed) also give complexes:

\[\begin{tikzcd}
	{\HH_\delta^0(C^{0,\bullet})} & {\HH_\delta^0(C^{1,\bullet})} & {\HH_\delta^0(C^{2,\bullet})} & {} \\
	{\HH_\delta^1(C^{0,\bullet})} & {\HH_\delta^1(C^{1,\bullet})} & {\HH_\delta^1(C^{2,\bullet})} & {} \\
	{\HH_\delta^2(C^{0,\bullet})} & {\HH_\delta^2(C^{1,\bullet})} & {\HH_\delta^2(C^{2,\bullet})} & {}
	\arrow["{d_*^{0,0}}", from=1-1, to=1-2]
	\arrow["{d_*^{1,0}}", from=1-2, to=1-3]
	\arrow["{d_*^{0,1}}", from=2-1, to=2-2]
	\arrow["{d_*^{1,1}}", from=2-2, to=2-3]
	\arrow["{d_*^{0,2}}", from=3-1, to=3-2]
	\arrow["{d_*^{1,2}}", from=3-2, to=3-3]
	\arrow["{d_*^{2,2}}", from=3-3, to=3-4]
	\arrow["{d_*^{2,1}}", from=2-3, to=2-4]
	\arrow["{d_*^{2,0}}", from=1-3, to=1-4]
\end{tikzcd}\]
Thus we get the induced cohomology groups
 \[ 
\HH_\delta^i(\HH_d^j(C^{\bullet, \bullet})) = \frac{\ker(\delta^{j,i}_*)}{\mathrm{im}(\delta^{j,i-1}_*)}, \qquad \HH_d^i(\HH_\delta^j(C^{\bullet,\bullet})) = \frac{\ker(d^{i,j}_*)}{\mathrm{im}(d^{i-1,j}_*)},
 \]
for $i \geq 1$ and
 \[ 
\HH_\delta^0(\HH_d^j(C^{\bullet, \bullet})) = \ker(\delta_*^{j,0}), \qquad \HH_d^0(\HH_\delta^j(C^{\bullet,\bullet})) = \ker(d_*^{0,j}).
 \]

In the general setting of the total complex, we have the two projections (homomorphisms)
 \[\begin{tikzcd}
& {\HH^1(\mathrm{Tot}^\bullet(C))} \\
{\HH_\delta^1(C^{0,\bullet})} && {\HH_d^1(C^{\bullet,0})}
\arrow["{\Phi_1}", from=1-2, to=2-1]
\arrow["{\Phi_2}"', from=1-2, to=2-3]
 \end{tikzcd}\]
defined by
 \[ 
\Phi_1([(g,\lambda)]) = [g], \qquad \Phi_2([(g,\lambda)]) = [\lambda],
 \]
where $[(g,\lambda)]$ denotes the equivalence class of $(g,\lambda) \in \ker(\partial^1)$.   
We use the notation $g = \{g_{\alpha \beta}\}$ for an element in $C^{0,1}$ and 
$\lambda=\{\lambda_\alpha\}$ for an element in $C^{1,0}$, and note that 
 \[
\HH_{\delta}^1(C^{0,\bullet})=\Hcech^1(\mathcal{U},\mathcal{O}^\times), \qquad 
\HH_{d}^1(C^{\bullet,0})= \prod_\alpha \Hmod^1(\mathfrak g,\mathcal{O}(U_\alpha)).
 \]
 \begin{lemma} \label{lem:proj}
The maps $\Phi_1, \Phi_2$ have the following kernels:
 \[ 
\ker(\Phi_1)  \simeq \HH_d^1(\HH_\delta^0(C^{\bullet,\bullet})), \qquad 
\ker(\Phi_2) \simeq \HH_\delta^1(\HH_d^0(C^{\bullet,\bullet})).
 \]
 \end{lemma}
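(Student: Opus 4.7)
The plan is to analyse cocycles $(g,\lambda) \in \ker(\partial^1) \subset \mathrm{Tot}^1(C)$ by choosing normal-form representatives adapted to each projection. Unpacking $\partial^1(g,\lambda) = 0$ with the sign convention $\partial^r = \sum_{p+q=r}(d^{p,q}+(-1)^p\delta^{p,q})$, the cocycle condition splits into three pieces: $\delta^{0,1}g = 0$ (the \v{C}ech cocycle condition on transition functions), $d^{1,0}\lambda = 0$ (each $\lambda_\alpha$ a local lift cocycle), and $d^{0,1}g = \delta^{1,0}\lambda$ (the compatibility \eqref{eq:compatibility}).

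For $[(g,\lambda)] \in \ker(\Phi_1)$, triviality of $[g]$ in $\HH_\delta^1(C^{0,\bullet})$ gives $g = \delta^{0,0}\mu$ for some $\mu \in C^{0,0}$, and subtracting $\partial^0(\mu) = (\delta^{0,0}\mu, d\log\mu)$ produces a representative $(0,\lambda')$ with $\lambda' = \lambda - d\log\mu$. The compatibility identity, combined with commutativity of the double complex, forces $\delta^{1,0}\lambda' = 0$, so $\lambda'$ represents a class in $\HH_\delta^0(C^{1,\bullet})$, while $d^{1,0}\lambda' = 0$ survives from the original cocycle condition. The residual coboundary freedom is to add $\partial^0(\nu)$ with $\delta^{0,0}\nu = 0$ in order to preserve the vanishing first component, i.e.\ $\nu \in \HH_\delta^0(C^{0,\bullet})$, which shifts $\lambda' \mapsto \lambda' + d^{0,0}\nu$. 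Modding out gives exactly $\HH_d^1(\HH_\delta^0(C^{\bullet,\bullet}))$, and the assignment $[(g,\lambda)] \mapsto [\lambda']$ is independent of the chosen trivialization $\mu$ because any two such choices differ by an element of $\HH_\delta^0(C^{0,\bullet})$, which contributes a coboundary in the target.

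The argument for $\ker(\Phi_2)$ is symmetric: from $\lambda = d^{0,0}\mu$ one passes to the representative $(g',0)$ with $g' = g\cdot(\delta^{0,0}\mu)^{-1}$, the cocycle condition reduces to $d^{0,1}g' = 0$ (each transition function is $\g$-invariant on overlaps) and $\delta^{0,1}g' = 0$, and the residual freedom $g' \mapsto g' \cdot \delta^{0,0}\nu$ with $\nu \in \HH_d^0(C^{0,\bullet})$ yields $\HH_\delta^1(\HH_d^0(C^{\bullet,\bullet}))$. Injectivity of both maps is obtained by combining the trivializing $\mu$ (from the $\Phi_i$-vanishing) and $\nu$ (realising the vanishing of the target class) into a single element of $C^{0,0}$ whose $\partial^0$-image is $(g,\lambda)$; surjectivity is immediate since each normal-form pair is already a total cocycle mapping to $0$ under the appropriate $\Phi_i$. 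The only real difficulty is careful bookkeeping of the sign $(-1)^p$ and the contrasting multiplicative/additive conventions in $C^{0,q}$ versus $C^{p,q}$ for $p\geq 1$; no geometric input beyond commutativity of $C^{\bullet,\bullet}$ is required.
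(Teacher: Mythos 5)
Your proof is correct and follows essentially the same route as the paper's: pass to a normal-form representative $(1,\lambda')$ resp.\ $(g',0)$ by subtracting $\partial^0\mu$, read off the second cocycle condition from the compatibility equation $d^{0,1}g=\delta^{1,0}\lambda$, and identify the residual coboundary freedom as $d^{0,0}(\ker\delta^{0,0})$ resp.\ $\delta^{0,0}(\ker d^{0,0})$. The only difference is that you spell out both cases and the well-definedness/injectivity checks, whereas the paper treats only $\ker(\Phi_2)$ and declares the other case symmetric.
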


 \begin{proof}
The arguments for the two isomorphisms are similar to each other, so we prove the statement only for $\ker(\Phi_2)$. 
If $[(g,\lambda)] \in \ker(\Phi_2)$, then there exists an element $\tilde g \in \ker(\delta^{0,1})$ such that 
$[(\tilde g,0)] = [(g,\lambda)]$. We have $d^{0,1}\tilde g = \delta^{1,0} 0=0$, so that $\tilde g\in\ker (d^{0,1})$. 
Furthermore, since $(\tilde g,0) + \partial^0(\mu) = (\tilde g \cdot \delta^{0,0} \mu, d^{0,0}\mu)$, 
the freedom in choice of representative $\tilde g$ is exactly $\delta^{0,0}(\ker(d^{0,0}))$. 
Thus $\ker(\Phi_2)=\HH_\delta^1(\HH_d^0(C^{\bullet,\bullet}))$.
 \end{proof}

Lemma \ref{lem:proj} is equivalent to exactness of the two sequences
 \begin{gather}
0 \longrightarrow \HH_d^1(\HH_\delta^0(C^{\bullet,\bullet})) \longrightarrow \HH^1(\mathrm{Tot}^\bullet(C)) \longrightarrow \mathrm{im}(\Phi_1) \longrightarrow 0 \label{ll1},\\
0 \longrightarrow \HH_\delta^1(\HH_d^0(C^{\bullet,\bullet})) \longrightarrow \HH^1(\mathrm{Tot}^\bullet(C)) \longrightarrow \mathrm{im}(\Phi_2) \longrightarrow 0 \label{ll2},
 \end{gather}
 and, furthermore, we have
 \[
 \varinjlim \HH_\delta^1(\HH_d^0(C^{\bullet,\bullet})) = \Hcech^1(M, (\mathcal{O}^\times)^\g), \qquad \varinjlim \HH_d^1(\HH_\delta^0(C^{\bullet,\bullet})) = \Hmod^1(\g, \mathcal{O}(M)),
 \]
where $(\mathcal{O}^\times)^\g \subset \mathcal{O}^\times$ is the subsheaf of $\g$-invariants. 

  \begin{cor}\label{cor:H1trivial}
{\rm (i)} If  $\HH_\delta^1(C^{0,\bullet}) = 0$, then $\HH^1(\mathrm{Tot}^\bullet (C)) \simeq \HH_d^1(\HH_\delta^0(C^{\bullet, \bullet}))$.\\
{\rm (ii)} Likewise, if $\HH_d^1(C^{\bullet,0})=0$, then $\HH^1(\mathrm{Tot}^\bullet (C)) \simeq \HH_\delta^1(\HH_d^0(C^{\bullet,\bullet}))$.
 \end{cor}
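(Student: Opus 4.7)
The plan is to read both assertions off directly from Lemma \ref{lem:proj} (equivalently, from the exact sequences \eqref{ll1} and \eqref{ll2}). The corollary is a formal degeneration of those sequences when one of the codomains vanishes, so the substantive content has already been established.

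For part (i), I would observe that the target of the projection $\Phi_1\colon \HH^1(\mathrm{Tot}^\bullet(C)) \to \HH_\delta^1(C^{0,\bullet})$ is the trivial group under the hypothesis $\HH_\delta^1(C^{0,\bullet}) = 0$. Consequently $\Phi_1$ is the zero homomorphism and $\ker(\Phi_1) = \HH^1(\mathrm{Tot}^\bullet(C))$ in its entirety. Combining this identification with the isomorphism $\ker(\Phi_1) \simeq \HH_d^1(\HH_\delta^0(C^{\bullet,\bullet}))$ furnished by Lemma \ref{lem:proj} yields the claim. Alternatively, the same conclusion follows from the exact sequence \eqref{ll1}: the rightmost term $\mathrm{im}(\Phi_1)$ is trivially zero, so the leftmost map is an isomorphism.

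The argument for part (ii) is strictly symmetric: the target of $\Phi_2$ is $\HH_d^1(C^{\bullet,0})$, the vanishing hypothesis forces $\Phi_2$ to be the zero map, and the second assertion of Lemma \ref{lem:proj} (or equivalently the exact sequence \eqref{ll2}, in which $\mathrm{im}(\Phi_2) = 0$) identifies $\HH^1(\mathrm{Tot}^\bullet(C))$ with $\HH_\delta^1(\HH_d^0(C^{\bullet,\bullet}))$.

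The only point worth a sentence is that these are isomorphisms of groups, not merely bijections of sets; this is automatic because every map in the double complex, the total differentials $\partial^i$, and the projections $\Phi_i$ were defined as group homomorphisms (multiplicatively on $C^{0,q}$ and additively elsewhere, with the conventions fixed earlier). I do not anticipate any obstacle beyond unpacking this bookkeeping: the corollary is essentially a one-line consequence of the lemma, included to isolate the two situations (no nontrivial line bundles; no nontrivial local weights) in which the full total cohomology collapses to a single factor.
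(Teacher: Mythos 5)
Your proposal is correct and matches the paper's intent exactly: the corollary is stated immediately after Lemma \ref{lem:proj} and the exact sequences \eqref{ll1}--\eqref{ll2} precisely so that it follows by observing that the vanishing hypothesis forces $\ker(\Phi_i)$ to be all of $\HH^1(\mathrm{Tot}^\bullet(C))$. Nothing further is needed.
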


 \begin{lemma}\label{lem:impsi}
The images of $\Phi_1,\Phi_2$ are given by the following exact sequences:
 \begin{align*}
0 \longrightarrow \mathrm{im}(\Phi_1) \longrightarrow \HH_d^0(\HH_\delta^1(C^{\bullet, \bullet})) 
\longrightarrow \HH_d^2(\HH_\delta^0(C^{\bullet,\bullet})), \\
0 \longrightarrow \mathrm{im}(\Phi_2) \longrightarrow \HH_\delta^0(\HH_d^1(C^{\bullet,\bullet})) 
\longrightarrow \HH_\delta^2(\HH_d^0(C^{\bullet,\bullet})).
 \end{align*}
 \end{lemma}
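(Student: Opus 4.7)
The plan is to prove both sequences simultaneously; they are symmetric under interchange of the roles of $d$ and $\delta$, so I carry out the argument for $\Phi_1$ in detail and transpose for $\Phi_2$. Conceptually, the rightmost map is the $d_2$-differential of the spectral sequence of $\mathrm{Tot}^\bullet(C)$ with $E_2^{p,q}=\HH_d^p(\HH_\delta^q(C^{\bullet,\bullet}))$, and the exact sequence identifies $\mathrm{im}(\Phi_1)$ with the subquotient $E_\infty^{0,1}\subset E_2^{0,1}=\HH_d^0(\HH_\delta^1)$.

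First I would unwind the cocycle equation $\partial^1(g,\lambda)=0$ for $(g,\lambda)\in C^{0,1}\oplus C^{1,0}$ into its three bihomogeneous components
\begin{equation*}
\delta^{0,1}g=0,\qquad d^{0,1}g=\delta^{1,0}\lambda,\qquad d^{1,0}\lambda=0,
\end{equation*}
with coboundaries of $f\in C^{0,0}$ given by $\partial^0 f=(\delta^{0,0}f,\,d^{0,0}f)$. The middle identity shows $[d^{0,1}g]=0$ in $\HH_\delta^1(C^{1,\bullet})$, i.e.\ $[g]$ lies in $\ker d_*^{0,1}=\HH_d^0(\HH_\delta^1(C^{\bullet,\bullet}))$; accounting for the coboundary ambiguity just as in Lemma \ref{lem:proj} promotes this to the injection starting the exact sequence.

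Next, I construct the obstruction map $\Theta\colon\HH_d^0(\HH_\delta^1(C^{\bullet,\bullet}))\to\HH_d^2(\HH_\delta^0(C^{\bullet,\bullet}))$ by a zig-zag along the double complex. Given a representative $g\in\ker\delta^{0,1}$ with $d^{0,1}g=\delta^{1,0}\lambda$ for some $\lambda\in C^{1,0}$, set $\Theta([g]):=[d^{1,0}\lambda]$. The element $d^{1,0}\lambda\in C^{2,0}$ is $\delta$-closed because $\delta^{2,0}d^{1,0}\lambda=d^{1,1}\delta^{1,0}\lambda=d^{1,1}d^{0,1}g=0$ (using the built-in commutativity of the double complex and $d\circ d=0$), and it is tautologically $d$-closed. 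Independence of choices splits into two checks: shifting $g$ by $\delta^{0,0}f$ forces $\lambda\mapsto\lambda+d^{0,0}f$, so $d^{1,0}\lambda$ is unchanged; the residual freedom $\lambda\mapsto\lambda+\mu$ with $\mu\in\ker\delta^{1,0}$ changes $d^{1,0}\lambda$ by $d^{1,0}\mu\in\mathrm{im}(d_*^{1,0})$, which vanishes upon passing to $\HH_d^2(\HH_\delta^0)$.

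Finally, exactness is closed up in two lines. The inclusion $\mathrm{im}(\Phi_1)\subseteq\ker\Theta$ is built in, since a genuine cocycle satisfies $d^{1,0}\lambda=0$; conversely, if $\Theta([g])=0$ then $d^{1,0}\lambda=d^{1,0}\mu$ for some $\mu\in\ker\delta^{1,0}$, and $(g,\lambda-\mu)$ is an honest cocycle whose $\Phi_1$-image is $[g]$. The sequence for $\Phi_2$ follows by transposition: $[\lambda]\in\HH_d^1(C^{\bullet,0})$ enters $\HH_\delta^0(\HH_d^1(C^{\bullet,\bullet}))$ precisely because $\delta^{1,0}\lambda=d^{0,1}g$ exhibits $\delta^{1,0}\lambda$ as $d$-exact, and the obstruction $\Theta'([\lambda]):=[\delta^{0,1}g]$ lands in $\HH_\delta^2(\HH_d^0(C^{\bullet,\bullet}))$ by the same bicommutativity argument. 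The main technical point requiring care is bookkeeping around the mixed multiplicative/additive notation at $C^{0,\bullet}$ flagged in the Remark: one has to verify that the squares of the double complex commute on the nose in the appropriate (log-)linearized sense so that the zig-zag lands where claimed, but once this is granted the whole argument is purely formal.
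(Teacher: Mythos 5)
Your proposal is correct and follows essentially the same route as the paper: both identify $\mathrm{im}(\Phi_i)$ inside the iterated cohomology via the cocycle equation and then build the obstruction class $[d^{1,0}\lambda]$ (resp.\ $[\delta^{0,1}g]$) by the standard zig-zag, checking $\delta$-closedness via commutativity of the squares and well-definedness against the freedom $\lambda\mapsto\lambda+\mu$, $\mu\in\ker\delta^{1,0}$. Your extra check that shifting the representative $g$ by $\delta^{0,0}f$ leaves the obstruction unchanged is a small point of rigor the paper leaves implicit, but it does not change the argument.
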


 \begin{proof}
We give the proof for the first exact sequence. The proof for the second one is similar.  Consider an element $[g] \in \mathrm{im}(\Phi_1) \subset \HH_\delta^1(C^{0,\bullet})$. Since it lies in the image of $\Phi_1$, there exists an element $\lambda \in C^{1,0}$ satisfying $\delta^{1,0} \lambda = d^{0,1} g$. This implies that $d_*^{0,1}[g] = [d^{0,1} g] = [\delta^{1,0} \lambda]=0$, and thus $[g] \in \HH_d^0(\HH_\delta^1(C^{\bullet,\bullet}))$. The map $\HH_\delta^1(C^{0,\bullet}) \supset \mathrm{im}(\Phi_1) \to  \HH_d^0(\HH_\delta^1(C^{\bullet,\bullet}))$ is obviously injective.
	
Now, consider an element $[g] \in \HH_d^0(\HH_\delta^1(C^{\bullet,\bullet}))$. Since $d_*^{0,1}[g]=0$, there exists an element $\lambda \in C^{1,0}$ satisfying $\delta^{1,0} \lambda = d^{0,1} g$. If $\tilde \lambda \in C^{1,0}$ is another such element, then $\tilde \lambda - \lambda=\lambda_0 \in \HH_\delta^0(C^{1,\bullet})$. The element $d^{1,0}(\lambda + \lambda_0) \in C^{2,0}$ is $\delta^{2,0}$-closed since $\delta^{2,0} \circ d^{1,0} = d^{1,1} \circ \delta^{1,0}$ and $\delta^{1,0} \lambda=d^{0,1} g$. Thus $d^{1,0}(\lambda + \lambda_0) \in \HH_\delta^0(C^{2,\bullet})$. Since the freedom in representative $\lambda + \lambda_0$ is exactly $\HH_\delta^0(C^{1,\bullet})$, we obtain a unique element $[d^{1,0} \lambda] \in \HH^2_d(\HH_\delta^0(C^{\bullet,\bullet}))$. We have $[d^{1,0} \lambda]=0$, or equivalently $d^{1,0} \lambda \in d_*^{1,0}(\HH_\delta^0(C^{1,\bullet}))$, if and only if $[g] \in \mathrm{im}(\Phi_1)$.
 \end{proof}
While $\mathrm{Pic}(M) = \varinjlim \HH_\delta^1(C^{0,\bullet})$ plays an important role, the group $\HH_d^1(C^{\bullet,0})$ will in general grow without bound as the cover $\mathcal{U}$ becomes finer.  Therefore, as a counterpart to $\Hcech^1(\mathcal{U},\mathcal{O}^\times)= \HH_\delta^1(C^{0,\bullet})$, we define $\mathfrak{M}_\g(\mathcal{U}) := \HH_\delta^0(\HH_d^1(C^{\bullet, \bullet}))$ which can be interpreted as the collection of local (infinitesimal) multipliers of $\g$ with respect to the cover $\mathcal{U}$ that are equivalent on overlaps. Note that this is also a reasonable definition in this context due to Lemma \ref{lem:impsi}.
 \begin{definition}
 The group of multipliers of $\g$ on $M$ is the direct limit $\mathfrak{M}_\g(M) := \varinjlim \mathfrak{M}_\g(\mathcal{U})$.
 \end{definition}
The group $\mathfrak{M}_\g(M)$ should not be confused with the group of global multipliers $\Hmod^1(\g,\mathcal{O}(M))$, which is often trivial as the algebra of global functions $\mathcal{O}(M)$ may be small (for instance $\C$ for compact $M$). To see the difference, consider an element $\lambda = \{\lambda_\alpha\} \in \ker(d^{1,0})$. We have $[\lambda]  \in \HH_d^1(\HH_\delta^0(C^{\bullet,\bullet}))$ if and only if $\lambda_\alpha = \lambda_\beta$ on $U_\alpha \cap U_\beta$, and $[\lambda] \in \mathfrak{M}_\g(\{U_\alpha\})=\HH_\delta^0(\HH_d^1(C^{\bullet,\bullet}))$ if and only if $\lambda_\alpha = \lambda_\beta + d \log \mu_{\alpha \beta}$ for some $\mu=\{\mu_{\alpha \beta}\} \in \mathcal{O}^\times(U_\alpha \times U_\beta)$.

\subsection{The equivariant Picard group}\label{sect:Picg}

From the description of lifts at the end of Section \ref{sect:2.1} we see that 
the pair $(g,\lambda)\in C^{0,1}\times C^{1,0}$ defines a $\g$-equivariant line bundle if and only if 
 \[
\delta^{0,1} g=0,\quad d^{1,0}\lambda=0,\quad d^{0,1}g=\delta^{1,0}\lambda
\quad\Leftrightarrow\quad (g,\lambda)\in\ker(\partial^1).
 \] 
The three conditions correspond to the cocycle condition for transition functions, the cocycle condition 
for the local lift \eqref{eq:cocycle1} and the compatibility condition \eqref{eq:compatibility}, respectively.  
Rescaling the fiber coordinates $u_\alpha$ in the line bundle corresponds exactly to changing the cocycle $(g,\lambda)$ by a coboundary in $\mathrm{im}(\partial^0)$.

 \begin{definition}
The group of equivalence classes of $\g$-equivariant line bundles is called the $\g$-equivariant Picard group 
and denoted by  $\mathrm{Pic}_\g(M):=\varinjlim\HH^1(\mathrm{Tot}^\bullet(C))$, where we exploit
the direct limit by refinements (or use a fine cover $\mathcal{U}$) as before.
 \end{definition}
Denoting by $\mathfrak{C}_\g$ the modified Chevalley-Eilenberg sheaf complex \eqref{CEg},
$\mathrm{Pic}_\g(M)$ may be identified with the first hypercohomology $\mathbb{H}^1(M,\mathfrak{C}_\g)$,
cf.\ \cite[Ch.\,3.5]{GH} for a discussion of hypercohomology $\mathbb{H}^q$.

The maps $\Phi_1 \colon \HH^1(\mathrm{Tot}^\bullet(C)) \to \HH_\delta^1(C^{0,\bullet})$ and $\Phi_2 \colon \HH^1(\mathrm{Tot}^\bullet(C)) \to \HH_d^1(C^{\bullet,0})$ induce maps 
\[\Phi_1 \colon \mathrm{Pic}_\g(M) \to \mathrm{Pic}(M), \qquad \Phi_2 \colon \mathrm{Pic}_\g(M) \to \mathfrak{M}_\g(M),\]
denoted by the same letters (Lemma \ref{lem:impsi} justifies the choice of codomain for the second map).
We define $\mathrm{Pic}^{\op{red}}_\g(M):=\mathrm{im}(\Phi_1\times\Phi_2)\subset
\op{Pic}(M)\times\mathfrak{M}_\g(M)$, which we call the reduced $\g$-equivariant Picard group, and denote by $\Psi_1,\Psi_2$ the projections 
\[\Psi_1 \colon \mathrm{Pic}^{\op{red}}_\g(M) \to \mathrm{Pic}(M), \qquad \Psi_2 \colon \mathrm{Pic}_\g(M) \to \mathfrak{M}_\g(M).\] 
Then $\varpi:=\Phi_1\times\Phi_2$ epimorphically maps $\mathrm{Pic}_\g(M)$ to
$\mathrm{Pic}^{\op{red}}_\g(M)$.

 \begin{prop} \label{prop:equivariantbundle}
$\mathrm{Pic}^{\op{red}}_\g(M)  \simeq \ker(\partial^1)/\sim$, where the equivalence relation is defined by
$(g,\lambda)\sim(\tilde g, \tilde \lambda)$ if $\tilde g=g \cdot \delta^{0,0} \nu$ and 
$\tilde \lambda = \lambda + d^{0,0} \mu$, where $\mu,\nu \in C^{0,0}$ 
satisfy
 \[
\mu/\nu \in \ker (d^{0,1} \circ \delta^{0,0})=\ker(\delta^{1,0} \circ d^{0,0}).
 \]
 \end{prop}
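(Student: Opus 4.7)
The plan is to use the identification $\mathrm{Pic}^{\op{red}}_\g(M) \simeq \mathrm{Pic}_\g(M)/\ker(\varpi)$, which is automatic from surjectivity of $\varpi$ onto its image. Composing the quotient $\ker(\partial^1) \twoheadrightarrow \mathrm{Pic}_\g(M)$ with this, one gets a surjection $\ker(\partial^1) \twoheadrightarrow \mathrm{Pic}^{\op{red}}_\g(M)$, and the task reduces to showing that its kernel consists exactly of those $(g,\lambda)$ that are $\sim$-equivalent to $(0,0)$. Throughout I would work after passing to a sufficiently fine refinement and then take the direct limit, so that representatives of equivalence classes in $\mathrm{Pic}(M)$ and $\mathfrak{M}_\g(M)$ can actually be chosen in $C^{0,0}$.

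The first substantive step is to unpack $\ker(\varpi) = \ker(\Phi_1)\cap\ker(\Phi_2)$. A cocycle $(g,\lambda)\in\ker(\partial^1)$ maps to $0$ under $\Phi_1$ exactly when $[g]=0$ in $\varinjlim\HH^1_\delta(C^{0,\bullet}) = \mathrm{Pic}(M)$, equivalently when $g = \delta^{0,0}\nu$ for some $\nu\in C^{0,0}$. Similarly, using that $\mathfrak{M}_\g(M)=\varinjlim\HH^0_\delta(\HH^1_d(C^{\bullet,\bullet}))$ sits inside $\HH^1_d(C^{\bullet,0})$ as a subgroup (not a further quotient), the condition $\Phi_2[(g,\lambda)]=0$ is equivalent to $\lambda = d^{0,0}\mu$ for some $\mu\in C^{0,0}$. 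Hence a cocycle $(g,\lambda)$ lies in the preimage of $\ker(\varpi)$ iff it can be written as $(\delta^{0,0}\nu, d^{0,0}\mu)$ for some $\mu,\nu\in C^{0,0}$.

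The second step is to derive the constraint on the pair $(\mu,\nu)$ coming from the cocycle condition for $\partial^1$. Substituting $g=\delta^{0,0}\nu$ and $\lambda=d^{0,0}\mu$ into $\delta^{1,0}\lambda = d^{0,1}g$ yields $\delta^{1,0}d^{0,0}\mu = d^{0,1}\delta^{0,0}\nu$. The bicomplex relation $\partial^1\circ\partial^0=0$ applied to $C^{0,0}$ gives the commutativity $\delta^{1,0}\circ d^{0,0} = d^{0,1}\circ\delta^{0,0}$, so the equation becomes $d^{0,1}\delta^{0,0}\mu = d^{0,1}\delta^{0,0}\nu$, i.e.\ $\mu/\nu \in \ker(d^{0,1}\circ\delta^{0,0}) = \ker(\delta^{1,0}\circ d^{0,0})$. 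Thus the preimage of $\ker(\varpi)$ in $\ker(\partial^1)$ is exactly $K := \{(\delta^{0,0}\nu, d^{0,0}\mu) : \mu/\nu\in\ker(d^{0,1}\circ\delta^{0,0})\}$; one checks $K$ is a subgroup (closure under addition, and $\mu\mapsto\mu^{-1}$, $\nu\mapsto\nu^{-1}$ for inverses), contains $\mathrm{im}(\partial^0)$ (the diagonal $\mu=\nu$), and induces exactly the relation $\sim$ of the proposition on $\ker(\partial^1)$. This gives $\mathrm{Pic}^{\op{red}}_\g(M)\simeq\ker(\partial^1)/\sim$.

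The main technical obstacle is to keep the bookkeeping straight between the multiplicative $C^{0,q}$ and the additive $C^{p,q}$ for $p\ge 1$: the cocycle condition that gives the constraint on $\mu/\nu$ lives in $C^{1,1}$ but is obtained from identities that mix the two. Once the commutativity $\delta^{1,0}d^{0,0}=d^{0,1}\delta^{0,0}$ and the equality of the two kernels in the statement are recorded, the remainder of the argument is formal.
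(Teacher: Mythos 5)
Your proof is correct and follows essentially the same route as the paper's: both identify the equivalence induced by $\varpi$ on $\ker(\partial^1)$ as allowing independent coboundaries $\delta^{0,0}\nu$ for $g$ and $d^{0,0}\mu$ for $\lambda$, and both extract the constraint $\mu/\nu\in\ker(d^{0,1}\circ\delta^{0,0})$ from the remaining cocycle condition $d^{0,1}g=\delta^{1,0}\lambda$ via the commutativity of the double complex. Your explicit check that vanishing in $\mathfrak{M}_\g(M)$ (a subgroup, not a further quotient, of $\HH_d^1(C^{\bullet,0})$) is equivalent to $\lambda$ being a coboundary is a welcome clarification of a point the paper leaves implicit.
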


 \begin{proof}
The reduced equivalence relation is weaker, as the coboundaries for $g$ and 
$\lambda$ can be chosen independently. Thus if $(g,\lambda)\in\ker(\partial^1)$ and 
$(g\cdot \delta^{0,0} \nu, \lambda+d^{0,0} \mu) \in C^{0,1} \times C^{1,0}$
is an equivalent cocycle then it automatically satisfies the first two conditions: 
$\delta^{0,1}(g\cdot\delta^{0,0}\nu)=0$ and $d^{1,0}(\lambda+d^{0,0}\mu)=0$. 
However the third condition applied to the new pair is 
$d^{0,1}(g\cdot\delta^{0,0}\nu)=\delta^{1,0} (\lambda+d^{0,0}\mu)$, which is equivalent to 
$\mu/\nu\in\ker(d^{0,1}\circ\delta^{0,0})=\ker(\delta^{1,0}\circ d^{0,0})$.
 \end{proof}

Let us investigate the relationship between $\mathrm{Pic}_\g(M)$ and $\mathrm{Pic}^{\op{red}}_\g(M)$. 
By Proposition \ref{prop:equivariantbundle} the admissible pair $(\nu,\mu)\in C^{0,0}\times C^{0,0}$ 
characterizing the freedom in choice of representatives $(g,\lambda)  \in \ker(\partial^1)$ for the reduced group can be rewritten as 
$(\nu,\mu)=(\nu/\mu,1)\cdot(\mu,\mu)\simeq(\nu/\mu,\mu) \in \ker(d^{0,1}\circ\delta^{0,0})\times C^{0,0}$. 
It follows that  (for a good cover) $\mathrm{Pic}^{\op{red}}_\g(M)$ is equal to
 \[
 \frac{\ker(\partial^1)}{\mathrm{im}(\tilde\partial)\cdot\mathrm{im}(\partial^0)} = 
\frac{\ker(\partial^1)}{\frac{\mathrm{im}(\tilde \partial)}{\mathrm{im}(\tilde \partial)\cap\mathrm{im}(\partial^0)}
\cdot\mathrm{im}(\partial^0)}= \frac{\HH^1(\mathrm{Tot}^\bullet (C))}{\frac{\mathrm{im}(\tilde \partial)}{\mathrm{im}(\tilde \partial) \cap \mathrm{im}(\partial^0)}},
 \]
where the map $\tilde\partial \colon \ker(d^{0,1}\circ\delta^{0,0})\to C^{0,1}\times C^{1,0}$ is defined 
as $\delta^{0,0}\times0$. We have:
 \begin{align*}
\mathrm{im}(\partial^0) &= \{(\delta^{0,0} \mu, d^{0,0} \mu)\in C^{0,1}\times C^{1,0} \mid\mu\in C^{0,0}\}, \\ 
\mathrm{im}(\tilde \partial) &= \{(\delta^{0,0} \kappa, 0) \in C^{0,1} \times C^{1,0} \mid \kappa \in \ker(d^{0,1} \circ \delta^{0,0})\} \simeq \ker(d^{0,1} \circ \delta^{0,0}) /\ker(\delta^{0,0}) .
 \end{align*}
If $d^{0,0} \mu =0$, then $\mu \in\ker(\delta^{1,0} \circ d^{0,0}) =  \ker(d^{0,1} \circ \delta^{0,0})$, 
and therefore
 \begin{align*}
\mathrm{im}(\tilde\partial)\cap\mathrm{im}(\partial^0) &= 
\{(\delta^{0,0} \mu,0) \in C^{0,1} \times C^{1,0} \mid \mu \in \ker (d^{0,0}) \} \\
&= \delta^{0,0}(\ker(d^{0,0})) \simeq \ker(d^{0,0})/(\ker(\delta^{0,0}) \cap \ker(d^{0,0})).
 \end{align*}
It follows that 
 \begin{equation*}\label{TgM}
\frac{\mathrm{im}(\tilde \partial)}{\mathrm{im}(\tilde \partial) \cap \mathrm{im}(\partial^0)} = \frac{\ker(d^{0,1} \circ \delta^{0,0})}{\ker(\delta^{0,0}) \cdot \ker(d^{0,0})}.
 \end{equation*}
Defining 
\begin{equation}\label{eq:TgM}
 T_\g(\mathcal{U}) :=\frac{\ker(d^{0,1} \circ \delta^{0,0})}{\ker(\delta^{0,0}) \cdot \ker(d^{0,0})}, \qquad T_\g(M) := \varinjlim T_\g(\mathcal{U}),
 \end{equation}
 gives us the relation between $\mathrm{Pic}_\g(M)$ and $\mathrm{Pic}^{\op{red}}_\g(M)$:

 \begin{prop} \label{prop:HtotPic}
The following sequence is exact:
 \begin{equation}\label{ll3}
0\to T_\g(M)\longrightarrow\mathrm{Pic}_\g(M)\xrightarrow{\varpi}\mathrm{Pic}^{\op{red}}_\g(M)\to 0.
 \end{equation}
 \end{prop}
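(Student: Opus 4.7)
My plan is to recognize that the hard work for this proposition has essentially been done in the paragraphs immediately preceding it, so the proof reduces to assembling the preceding computations via a clean application of the second isomorphism theorem, together with explicit identification of the subgroups involved.

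First, I would note that surjectivity of $\varpi$ is tautological: by definition $\op{Pic}^{\op{red}}_\g(M)=\op{im}(\Phi_1\times\Phi_2)=\op{im}(\varpi)$, so the only real content is identifying $\ker(\varpi)$ with $T_\g(M)$. For this I would take the description of $\op{Pic}^{\op{red}}_\g(M)$ from Proposition \ref{prop:equivariantbundle}: it is $\ker(\partial^1)$ modulo the relation $(g,\lambda)\sim(g\cdot\delta^{0,0}\nu,\lambda+d^{0,0}\mu)$ with $\mu/\nu\in\ker(d^{0,1}\circ\delta^{0,0})$. Using the factorization $(\nu,\mu)=(\nu/\mu,1)\cdot(\mu,\mu)$ already exhibited in the text, this relation is generated by pairs in $\op{im}(\tilde\partial)\cdot\op{im}(\partial^0)$, where $\tilde\partial\colon\ker(d^{0,1}\circ\delta^{0,0})\to C^{0,1}\times C^{1,0}$ sends $\kappa\mapsto(\delta^{0,0}\kappa,0)$. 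Hence at the level of cocycles,
\[
\op{Pic}^{\op{red}}_\g(M)=\frac{\ker(\partial^1)}{\op{im}(\tilde\partial)\cdot\op{im}(\partial^0)},\qquad \op{Pic}_\g(M)=\frac{\ker(\partial^1)}{\op{im}(\partial^0)},
\]
and $\varpi$ is the canonical projection between these quotients.

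Second, applying the second isomorphism theorem to the subgroups $\op{im}(\tilde\partial)$ and $\op{im}(\partial^0)$ of $\ker(\partial^1)$ gives
\[
\ker(\varpi)=\frac{\op{im}(\tilde\partial)\cdot\op{im}(\partial^0)}{\op{im}(\partial^0)}\simeq\frac{\op{im}(\tilde\partial)}{\op{im}(\tilde\partial)\cap\op{im}(\partial^0)}.
\]
Then I would invoke the two explicit identifications carried out in the paragraph above the proposition: $\op{im}(\tilde\partial)\simeq\ker(d^{0,1}\circ\delta^{0,0})/\ker(\delta^{0,0})$ and $\op{im}(\tilde\partial)\cap\op{im}(\partial^0)\simeq\ker(d^{0,0})/(\ker(\delta^{0,0})\cap\ker(d^{0,0}))$, the latter using that $d^{0,0}\mu=0$ forces $\mu\in\ker(\delta^{1,0}\circ d^{0,0})=\ker(d^{0,1}\circ\delta^{0,0})$. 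Substituting gives $\ker(\varpi)\simeq\ker(d^{0,1}\circ\delta^{0,0})/(\ker(\delta^{0,0})\cdot\ker(d^{0,0}))=T_\g(\mathcal{U})$.

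Finally, since the direct-limit functor is exact, passing to finer covers yields $\ker(\varpi)\simeq T_\g(M)$ at the level of $\op{Pic}_\g(M)$, which together with surjectivity gives the exact sequence \eqref{ll3}. The main delicacy I foresee is not conceptual but notational: the group $C^{0,\bullet}$ is written multiplicatively while $C^{\bullet\geq 1,\bullet}$ is additive, so the second-isomorphism argument has to be read with the convention (stated in the earlier Remark) that $0$ denotes the identity in both cases, and the ``product'' $\op{im}(\tilde\partial)\cdot\op{im}(\partial^0)$ must be unpacked as the componentwise sum $(\delta^{0,0}(\kappa\nu),d^{0,0}\mu)$. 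Once this bookkeeping is kept straight, the verification that $\varpi$ is well-defined on equivalence classes and that no extra relations appear in the limit is a routine check.
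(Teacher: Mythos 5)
Your proposal is correct and follows essentially the same route as the paper: the paper proves this proposition through the displayed chain of equalities immediately preceding it, which is exactly your second-isomorphism-theorem computation identifying $\ker(\varpi)\simeq\op{im}(\tilde\partial)/(\op{im}(\tilde\partial)\cap\op{im}(\partial^0))\simeq T_\g(\mathcal{U})$, followed by passage to the direct limit. Your version merely makes explicit the isomorphism-theorem step and the multiplicative/additive bookkeeping that the paper leaves implicit.
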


The commutative diagram in Figure \ref{fig:cohom}  gives relations between the groups we have considered, 
leading to  vanishing conditions for $\op{Pic}_\g(M)$ and various isomorphisms.
 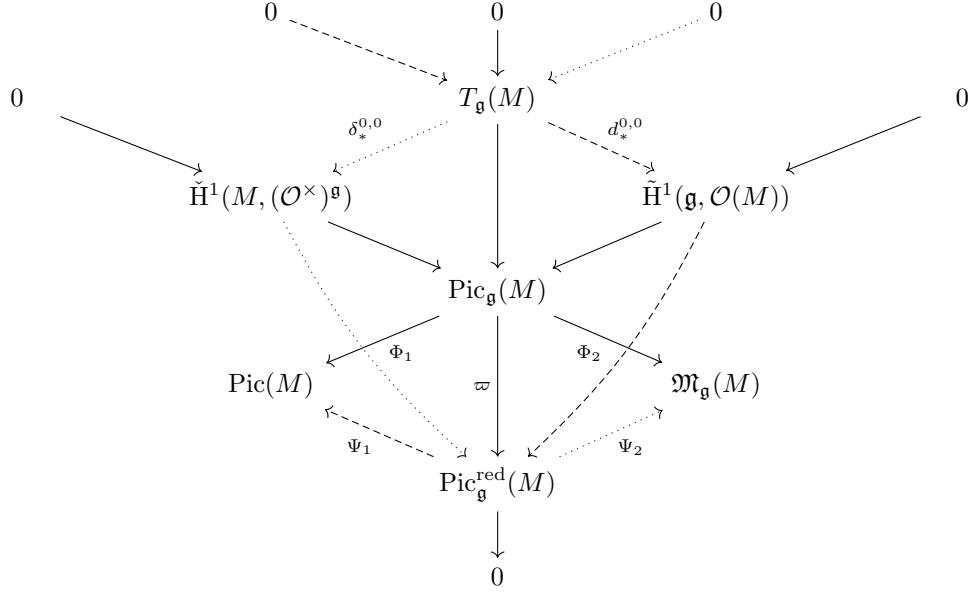
\begin{figure}[h]
{\small
\[\begin{tikzcd}
	& 0 & 0 & 0 \\
	 \quad \qquad 0 \quad \qquad   && {T_\g(M)} &&  \qquad \quad 0 \qquad \quad\\
	& {\check\HH^1(M, (\mathcal{O}^\times)^\g)} && {\Hmod^1(\g, \mathcal{O}(M))} \\
	&& {\mathrm{Pic}_\g(M)} \\
	& {\op{Pic}(M)} && {\mathfrak{M}_\g(M)} \\
	&& {\mathrm{Pic}^{\op{red}}_\g(M)} \\
	&& 0
	\arrow[from=2-3, to=4-3]
	\arrow["{\varpi}"', from=4-3, to=6-3]
	\arrow["{\Psi_1}", dashed, from=6-3, to=5-2]
	\arrow["{\Psi_2}"', dotted, from=6-3, to=5-4]
	\arrow["{\Phi_1}", from=4-3, to=5-2]
	\arrow["{\Phi_2}"', from=4-3, to=5-4]
	\arrow[from=1-3, to=2-3]
	\arrow[from=2-1, to=3-2]
	\arrow[from=2-5, to=3-4]
	\arrow[dashed, from=1-2, to=2-3]
	\arrow[dotted, from=1-4, to=2-3]
	\arrow[dotted, "{\delta_*^{0,0}}"', from=2-3, to=3-2]
	\arrow[dashed, "{d_*^{0,0}}", from=2-3, to=3-4]
	\arrow[from=3-2, to=4-3]
	\arrow[from=3-4, to=4-3]
	\arrow[from=6-3, to=7-3]
	\arrow[dashed, bend left=10, from=3-4, to=6-3]
	\arrow[dotted, bend right=10, from=3-2, to=6-3]
\end{tikzcd}\]}
\caption{Commutative diagram: The dotted and dashed long sequences as well as 
three straight line sequences are exact.} \label{fig:cohom}
 \end{figure}
The diagram contains the short exact sequence \eqref{ll3}, the (direct limit of) short exact sequences \eqref{ll1}-\eqref{ll2},
and also two longer exact sequences. For instance, exactness at $\delta^{0,0}_*$ and $d^{0,0}_*$ 
can be seen as follows.
If $\mu \in \ker(d^{0,1} \circ \delta^{0,0})$ then $\delta^{0,0} \mu \in \ker(d^{0,1}) \cap \ker(\delta^{0,1})$ 
and $d^{0,0} \mu \in \ker(\delta^{1,0}) \cap \ker(d^{1,0})$, whence 
$[\delta^{0,0} \mu] \in \HH_\delta^1(\HH_d^0(C^{\bullet,\bullet}))$ 
and $[d^{0,0} \mu] \in \HH_d^1(\HH_\delta^0(C^{\bullet,\bullet}))$. 
We have $[\delta^{0,0} \mu]=0$ if and only if there exists an element $\mu_0 \in \ker(d^{0,0})$ such that 
$\delta^{0,0} \mu = \delta^{0,0} \mu_0$. 
This happens if and only if $\mu=\frac{\mu}{\mu_0} \mu_0  \in \ker(\delta^{0,0}) \cdot \ker(d^{0,0})$. 
By a similar argument $[d^{0,0} \mu] = 0$ if and only if $\mu \in \ker(\delta^{0,0}) \cdot \ker(d^{0,0})$.

Note that the maps $\Hcech^1(M,(\mathcal{O}^\times)^\g) \to \mathrm{Pic}_\g(M)$ and $\Hmod^1(\g,\mathcal{O}(M)) \to \mathrm{Pic}_\g(M)$ in the commutative diagram are defined by $[g] \mapsto [(g,0)]$ and $[\lambda] \mapsto [(1,-\lambda)]$, respectively.

 \begin{cor}\label{cor:isomorphic}
If $\Hcech^1(M, (\mathcal{O}^\times)^\g)=0$ or $\Hmod^1(\g, \mathcal{O}(M))=0$, 
then $\mathrm{Pic}^{\op{red}}_\g(M)\simeq\mathrm{Pic}_\g(M)$.
 \end{cor}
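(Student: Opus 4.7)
The plan is to reduce everything to showing $T_\g(M)=0$ under either hypothesis. By Proposition \ref{prop:HtotPic}, the exact sequence \eqref{ll3} identifies $T_\g(M)$ with the kernel of the epimorphism $\varpi$, so $\varpi$ is an isomorphism exactly when $T_\g(M)$ vanishes. Both hypotheses of the corollary are statements about groups that do not mention $T_\g(M)$ directly, so the task is to find natural maps connecting them.

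The key step is to construct and exploit two injective homomorphisms from $T_\g(M)$. These are the maps $\delta^{0,0}_*$ and $d^{0,0}_*$ already appearing in the commutative diagram of Figure \ref{fig:cohom}, which in the direct limit land respectively in $\check\HH^1(M,(\mathcal{O}^\times)^\g)$ and in $\Hmod^1(\g,\mathcal{O}(M))$. For well-definedness, I would observe that given $\mu\in\ker(d^{0,1}\circ\delta^{0,0})=\ker(\delta^{1,0}\circ d^{0,0})$, the element $\delta^{0,0}\mu$ is automatically both $\delta^{0,1}$-closed (since $\delta$ is a differential) and $d^{0,1}$-closed (by hypothesis on $\mu$), hence represents a class in $\HH^1_\delta(\HH^0_d(C^{\bullet,\bullet}))$; passage to the direct limit gives $\check\HH^1(M,(\mathcal{O}^\times)^\g)$. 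The analogous check produces a class of $d^{0,0}\mu$ in $\Hmod^1(\g,\mathcal{O}(M))$. Injectivity at the pre-limit stage is exactly the computation sketched in the excerpt following Figure \ref{fig:cohom}: if $[\delta^{0,0}\mu]=0$ then $\delta^{0,0}\mu=\delta^{0,0}\mu_0$ for some $\mu_0\in\ker(d^{0,0})$, whence $\mu/\mu_0\in\ker(\delta^{0,0})$ and $\mu=(\mu/\mu_0)\cdot\mu_0\in\ker(\delta^{0,0})\cdot\ker(d^{0,0})$, so $[\mu]=0$ in $T_\g(\mathcal{U})$; the symmetric argument gives injectivity of $d^{0,0}_*$.

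With these two injections established, the corollary falls out at once: vanishing of either $\check\HH^1(M,(\mathcal{O}^\times)^\g)$ or $\Hmod^1(\g,\mathcal{O}(M))$ forces $T_\g(M)=0$, and the exact sequence \eqref{ll3} then collapses to an isomorphism $\mathrm{Pic}_\g(M)\simeq\mathrm{Pic}^{\op{red}}_\g(M)$. The only delicate point I expect is the interaction between injectivity and the direct limit: the maps $\delta^{0,0}_*$ and $d^{0,0}_*$ are injective on each $T_\g(\mathcal{U})$, and since direct limits are exact this injectivity is preserved in the limit, but it is worth a brief remark to avoid leaving the reader to reconstruct it. Aside from this bookkeeping, the argument is purely formal, essentially a diagram chase through Figure \ref{fig:cohom}.
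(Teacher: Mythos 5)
Your argument is correct and is precisely the one the paper intends: the corollary is read off from the exactness of the dotted and dashed sequences in Figure \ref{fig:cohom} at $T_\g(M)$, i.e.\ the injectivity of $\delta^{0,0}_*$ and $d^{0,0}_*$, which the paper verifies by the same computation you reproduce ($[\delta^{0,0}\mu]=0$ iff $\mu\in\ker(\delta^{0,0})\cdot\ker(d^{0,0})$), followed by the collapse of the exact sequence \eqref{ll3}. Your added remark on exactness of direct limits is a sensible piece of bookkeeping the paper leaves implicit; nothing is missing.
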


\begin{cor}\label{cor:inj}
{\rm (i)} We have $\Hmod^1(\g, \mathcal{O}(M))=0$ if and only if $\Phi_1\colon \op{Pic}_\g(M)\to\op{Pic}(M)$ is injective.\\
{\rm (ii)} Likewise, $\check\HH^1(M,(\mathcal{O}^\times)^\g)=0$ if and only if
 $\Phi_2\colon \op{Pic}_\g(M)\to\mathfrak{M}_\g(M)$ is injective.
 \end{cor}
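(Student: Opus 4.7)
The plan is to deduce the corollary directly from Lemma \ref{lem:proj} by taking direct limits over refinements of the open cover. For a fixed good cover $\mathcal{U}$, Lemma \ref{lem:proj} gives the two exact sequences
\begin{equation*}
0 \to \HH_d^1(\HH_\delta^0(C^{\bullet,\bullet})) \to \HH^1(\mathrm{Tot}^\bullet(C)) \xrightarrow{\Phi_1} \HH_\delta^1(C^{0,\bullet}),
\end{equation*}
\begin{equation*}
0 \to \HH_\delta^1(\HH_d^0(C^{\bullet,\bullet})) \to \HH^1(\mathrm{Tot}^\bullet(C)) \xrightarrow{\Phi_2} \HH_d^1(C^{\bullet,0}).
\end{equation*}

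Next I would pass to the direct limit in each sequence. Since filtered colimits in the category of abelian groups are exact, exactness is preserved. Using the identifications already recorded in the text, namely
\[
\varinjlim \HH_d^1(\HH_\delta^0(C^{\bullet,\bullet})) = \Hmod^1(\g,\mathcal{O}(M)), \qquad \varinjlim \HH_\delta^1(\HH_d^0(C^{\bullet,\bullet})) = \Hcech^1(M,(\mathcal{O}^\times)^\g),
\]
together with $\varinjlim \HH^1(\mathrm{Tot}^\bullet(C)) = \op{Pic}_\g(M)$, $\varinjlim \HH_\delta^1(C^{0,\bullet}) = \op{Pic}(M)$, and the fact that the codomain of $\Phi_2$ in Lemma \ref{lem:proj} maps injectively (by Lemma \ref{lem:impsi}) into $\varinjlim \HH_\delta^0(\HH_d^1(C^{\bullet,\bullet})) = \mathfrak{M}_\g(M)$ so that the kernel is unchanged under this refactoring, one obtains the exact sequences
\begin{equation*}
0 \to \Hmod^1(\g,\mathcal{O}(M)) \to \op{Pic}_\g(M) \xrightarrow{\Phi_1} \op{Pic}(M),
\end{equation*}
\begin{equation*}
0 \to \Hcech^1(M,(\mathcal{O}^\times)^\g) \to \op{Pic}_\g(M) \xrightarrow{\Phi_2} \mathfrak{M}_\g(M).
\end{equation*}

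Both directions of each equivalence are then immediate: $\Phi_i$ is injective if and only if its kernel vanishes, which by the above exact sequences is the case precisely when the corresponding cohomology group is trivial. The kernel inclusions are concretely realized by the formulas $[g]\mapsto [(g,0)]$ and $[\lambda]\mapsto [(1,-\lambda)]$ noted after Figure \ref{fig:cohom}: each of these pairs lies in $\ker(\partial^1)$ and is visibly sent to a coboundary by $\Phi_2$ (respectively $\Phi_1$), confirming that the limit inclusions agree with the maps in the diagram. No real obstacle is anticipated; the corollary is essentially bookkeeping on top of Lemma \ref{lem:proj}, with the only mildly delicate point being exactness of the filtered colimit, which is standard.
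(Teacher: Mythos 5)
Your proposal is correct and follows the same route the paper intends: Corollary \ref{cor:inj} is exactly the direct-limit form of the exact sequences \eqref{ll1}--\eqref{ll2} coming from Lemma \ref{lem:proj}, combined with the identifications $\varinjlim \HH_d^1(\HH_\delta^0(C^{\bullet,\bullet})) = \Hmod^1(\g,\mathcal{O}(M))$ and $\varinjlim \HH_\delta^1(\HH_d^0(C^{\bullet,\bullet})) = \Hcech^1(M,(\mathcal{O}^\times)^\g)$ recorded in the text (and displayed as the exact diagonal sequences in Figure \ref{fig:cohom}). Your remarks on exactness of filtered colimits and on the kernel being unaffected by corestricting the codomain of $\Phi_2$ to $\mathfrak{M}_\g(M)$ are the right bookkeeping points and match the paper's implicit argument.
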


Notice $\mathrm{im}(\Psi_1)= \mathrm{im}(\Phi_1)$ and $\mathrm{im}(\Psi_2)= \mathrm{im}(\Phi_2)$,
which are described by Lemmata \ref{lem:proj} and \ref{lem:impsi}. 


 \begin{prop}\label{prop:TgM}
The group $T_\g(M)$ of equivariant line bundles with trivial reduction
corresponds to global locally trivial lifts of $\g$ to the trivial line bundle over $M$
modulo globally trivial lifts.
 \end{prop}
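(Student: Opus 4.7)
The plan is to unpack formula \eqref{eq:TgM} by reinterpreting each factor in terms of lifts of $\g$ to the trivial line bundle $M\times\C$, then pass to the direct limit over covers.

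First I would introduce, for each open cover $\mathcal{U}=\{U_\alpha\}$, the map
\[
\Theta_\mathcal{U}\colon \ker(d^{0,1}\circ\delta^{0,0})\longrightarrow\bigl\{\lambda\in\g^*\otimes\mathcal{O}(M)\,\bigm|\, \lambda|_{U_\alpha}=d\log\mu_\alpha \text{ for some }\mu_\alpha\in\mathcal{O}^\times(U_\alpha)\bigr\},\qquad \mu\mapsto d^{0,0}\mu.
\]
The hypothesis $d^{0,1}\circ\delta^{0,0}(\mu)=0$ unwinds to $X\log\mu_\alpha=X\log\mu_\beta$ on each overlap, so $\Theta_\mathcal{U}(\mu)$ glues to a global element of $\g^*\otimes\mathcal{O}(M)$ that is automatically $d^{1,0}$-closed, being locally the image of $d^{0,0}$. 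Hence the pair $(1,\Theta_\mathcal{U}(\mu))$ lies in $\ker(\partial^1)$ and represents a $\g$-equivariant structure on $M\times\C$; its reduction in $\op{Pic}^{\op{red}}_\g(M)$ is manifestly trivial, so it defines an element of $T_\g(M)$. Surjectivity of $\Theta_\mathcal{U}$ onto this set of \emph{global} locally trivial lifts is immediate from the definition.

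Next I would compute $\ker(\Theta_\mathcal{U})=\ker(d^{0,0})$ and analyze the image of the denominator $\ker(\delta^{0,0})\cdot\ker(d^{0,0})$. Elements of $\ker(\delta^{0,0})$ are precisely global functions $\mu\in\mathcal{O}^\times(M)$, and $\Theta_\mathcal{U}$ sends them to $d\log\mu$, exhausting the subgroup of globally trivial lifts; the factor $\ker(d^{0,0})$ maps to $0$ and so contributes nothing further. Hence $\Theta_\mathcal{U}$ descends to an isomorphism between $T_\g(\mathcal{U})$ and the group of global lifts locally trivial on $\mathcal{U}$, modulo the globally trivial ones. Passing to the direct limit as $\mathcal{U}$ is refined absorbs all sufficiently fine trivializing covers in the numerator, producing the set of global lifts that are locally trivial in the sheaf-theoretic sense, while the denominator is intrinsically global and independent of $\mathcal{U}$. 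I expect the main technical obstacle to be the routine naturality check that $\{\Theta_\mathcal{U}\}$ is compatible with refinement maps, so that the direct limit commutes with the quotient; this done, the proposition follows.
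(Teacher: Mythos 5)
Your proof is correct and follows essentially the same route as the paper: the map $\Theta_{\mathcal{U}}=d^{0,0}$ applied to the numerator and denominator of \eqref{eq:TgM} is exactly how the paper identifies $T_\g(\mathcal{U})$ with $\ker\bigl(\delta^{1,0}|_{\op{im}d^{0,0}}\bigr)/d^{0,0}(\ker\delta^{0,0})$, i.e.\ global locally trivial lifts modulo globally trivial ones, before passing to the direct limit. The only content of the paper's proof you omit is the complementary dual reading obtained by applying $\delta^{0,0}$ instead (line bundles with $\g$-invariant transition functions modulo global invariants), which is not needed for the statement as given.
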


 \begin{proof}
First note that an element $[(g,\lambda)]\in\op{Pic}_\g(M)$ in the kernel of $\varpi$ also belongs to the kernel 
of $\Phi_1=\Psi_1\circ\varpi$, so $g$ determines a trivial line bundle. Similarly using 
$\Phi_2=\Psi_2\circ\varpi$ we conclude that $\lambda$ yields a locally trivial lift (the multiplier
is cohomologous to zero on open sets $U_\alpha$). 

Next, applying $d^{0,0}$ to both the numerator and the denominator of the right hand side of \eqref{TgM}
we get $T_\g(M)\simeq\ker\bigl(\delta^{1,0}|_{\op{im}d^{0,0}}\bigr)/d^{0,0}(\ker\delta^{0,0})$,
whence the required interpretation.

Finally, by applying $\delta^{0,0}$ to \eqref{TgM} we conclude 
$T_\g(M)\simeq\ker\bigl(d^{0,1}|_{\op{im}\delta^{0,0}}\bigr)/\delta^{0,0}(\ker d^{0,0})$,
which corresponds to line bundles with $\g$-invariant transition functions modulo global $\g$-invariants.
 \end{proof}
Propositions \ref{prop:HtotPic} and \ref{prop:TgM} combine into Theorem \ref{Th1}.

 \begin{example}[Rational curve]\label{ex:sl2P1}
Consider the projective space $\mathbb CP^1$ with charts $U_0\simeq\mathbb C^1(x)$ and 
$U_\infty\simeq\mathbb C^1(y)$, with coordinates related by $y=1/x$ on $U_0 \cap U_\infty$. 
The Lie algebra $\mathfrak{sl}(2,\mathbb C)$ acts naturally on this space
with the basis $X,Y,Z$ given in local coordinates:
 \[ 
X|_{U_0} = \partial_x, \;\; Y|_{U_0} = x \partial_x, \;\; Z|_{U_0} = x^2 \partial_x, \qquad\quad 
X|_{U_\infty} = -y^2 \partial_y, \;\; Y|_{U_\infty} = -y\partial_y, \;\; Z|_{U_\infty} = -\partial_y.  
 \]
Let $\lambda_i$ be a representative of an element in $H^1(\mathfrak{sl}(2,\mathbb C),\mathcal{O}(U_i))$ for 
$i=0,\infty$.
	
Taking $\mu_0= e^{\int \lambda_0(X_0) dx}$ (the integral sign denotes the anti-derivative on $\mathbb C$) gives
 \[ 
(\lambda_0-d^{0,0} \mu_0)(X_0) =\lambda_0(X_0)-\partial_x (\log \mu_0) =0, 
 \]
so we can without loss of generality assume that $\lambda_0(X)=0$. 
The values $\lambda_0(Y)$ and $\lambda_0(Z)$ are now determined by the cocycle conditions
	\begin{align*}
		X(\lambda_0(Y))-Y(\lambda_0(X))&=\lambda_0 ([X,Y]) = \lambda_0(X) = 0, \\
		X(\lambda_0(Z))-Z(\lambda_0(X))&=\lambda_0 ([X,Z]) = 2 \lambda_0(Y), \\
		Y(\lambda_0(Z))-Z(\lambda_0(Y))&=\lambda_0 ([Y,Z]) =  \lambda_0(Z).
	\end{align*}
This leads to
 \[
\lambda_0(X)=0, \qquad \lambda_0(Y)=\tfrac12 A, \qquad \lambda_0(Z) = A x.
 \]
Analogous computations on $U_\infty$ gives
 \[
\lambda_\infty(X)=By, \qquad \lambda_\infty(Y)=\tfrac12 B, \qquad \lambda_\infty(Z) = 0.
 \]
 
Next we require that $d^{0,1} g_{0 \infty} = (\delta^{1,0} \lambda)_{0 \infty}$ for some $\delta^{0,1}$-cocycle 
$g_{0 \infty} \in \mathcal{O}^\times(U_0 \cap U_\infty)$. Evaluating this on $X$, $Y$ and $Z$ leads to the following 
overdetermined system of ODEs:
 \[ 
\frac{\partial_x (g_{0 \infty})}{g_{0 \infty}} = -\frac{B}{x}, \qquad 
\frac{x\partial_x (g_{0 \infty})}{g_{0 \infty}} = \frac{A-B}2 , \qquad 
\frac{x^2\partial_x (g_{0 \infty})}{g_{0 \infty}} = Ax . 
 \]
The system has a solution if and only if $A=-B$ in which case $g_{0 \infty} = C x^A$. 
This solution is holomorphic on $U_0 \cap U_\infty$ if and only if $A \in \mathbb Z$. 
The constant $C$ can be set equal to $1$ by multiplying $g_{0 \infty}$ with  $(\delta^{0,0} \mu)_{0 \infty}$ for 
$\mu = \{\mu_0=1, \mu_\infty=C\} \in \ker(d^{0,0})$. Thus the global lifts are given by 
$(\lambda_0, \lambda_\infty)$ with $A=-B \in \mathbb Z$, and the corresponding 
$\mathfrak{sl}(2,\mathbb C)$-equivariant line bundle has transition function $g_{0 \infty} = x^{A}$. 
To sum up, the cover $\mathcal{U}=\{U_0,U_\infty\}$ is nice and we get
 \[
\mathrm{Pic}_{\mathfrak{sl}(2,\mathbb C)}(\mathbb CP^1)\simeq 
\mathrm{Pic}^{\op{red}}_{\mathfrak{sl}(2,\mathbb C)}(\mathbb CP^1)\simeq \mathbb Z.
 \]
The first isomorphism  is a consequence of $\Hcech^1(M, (\mathcal{O}^\times)^\g) \simeq \HH_\delta^1(\HH_d^0(C^{\bullet,\bullet}))  =0$ and 
Corollary \ref{cor:isomorphic}. 
Since $\mathbb CP^1$ is covered by two open charts, we have $C^{0,2}=0$, implying $\HH_d^0(C^{\bullet,2})=0$ 
and $\HH_\delta^2(\HH_d^0(C^{\bullet,\bullet}))=0$. By Lemma \ref{lem:impsi}, $\mathrm{im}(\Psi_2) \simeq \HH_\delta^0(\HH_d^1(C^{\bullet,\bullet})) = \mathfrak{M}_\g(\{U_0,U_\infty\})$, which  for this cover is isomorphic to $\mathbb Z$.

A straightforward generalization of this computation gives $T_\g(\C P^n)=0$, 
$\mathfrak{M}_\g(\C P^n)=\C$ (for $n=1$ this is parametrized by the above $A=-B$, but with a finer cover 
$\mathcal{U}$ it is unconstrained: $A\in\C$)
and $\op{Pic}_\g(\C P^n)=\mathbb Z$ for $\g=\mathfrak{sl}(n+1,\C)$. 
 \end{example}

 \begin{remark} \label{rk:tautological}
Recall that $\mathrm{Pic}(\C P^n) = \{\mathcal{O}_{\C P^n}(k)\}_{k\in\mathbb Z}\simeq\mathbb Z$, where 
$\mathcal{O}_{\mathbb CP^n}(0)$ is the trivial line bundle,
$\mathcal{O}_{\mathbb CP^n}(-1)$ is the tautological line bundle and for $k>0$:
 \[
\mathcal{O}_{\mathbb CP^n}(-k) = \mathcal{O}_{\C P^n}(-1)^{\otimes k}, \qquad 
\mathcal{O}_{\mathbb CP^n}(k) =\bigl(\mathcal{O}_{\C P^n}(-1)^{\otimes k}\bigr)^*.
 \] 
The canonical line bundle is $K_{\mathbb CP^n}=\Lambda^n T^* \mathbb CP^n=
\mathcal{O}_{\mathbb CP^n}(-n-1)$, cf.\ \cite[Ch.\ 2.2]{H}.
 \end{remark}

We will see later that $\Psi_1$ and $\Phi_1$ may be non-injective. The following shows it for $\Psi_2$
and $\Phi_2$.

 \begin{example}[Elliptic curve]
Consider $\mathbb C^2$ with coordinates $(x,u)$ and two commuting maps
 \[
h_1(x,u)= (x+1,u), \qquad h_2(x,u)= (x+\omega_1, \omega_2 u),
 \]
where $\omega_1\in\C\setminus\R$, $\omega_2\neq0$. 
Both of these maps respect the projection $\mathbb C^2 \to \mathbb C$ given by 
$(x,u)\mapsto x$, and the vector field $\partial_x$. Thus in the quotient by the $\mathbb Z^2$ action 
generated by $h_1,h_2$ we get that the vector field $\partial_x$ on the elliptic curve $\Gamma = \mathbb C/\mathbb Z^2$ lifts to the vector field $\partial_x$ on the line bundle $\mathbb C^2/\mathbb Z^2$ over the elliptic curve. 
This line bundle $L_\omega$ is topologically trivial but holomorphically nontrivial for 
$\omega_2 \neq e^{2 \pi i \omega_1}$, and all line bundles of this form lie in $\ker(\Psi_2)$; 
see section 27 of \cite{A} for details. The general lift of $\g$ is given by $\p_x+c\,u\p_u$, $c\in\C$.

For holomorphic curves we have a short exact sequence (where $c_1$ is the first Chern class)
 \[
0\to\op{Pic}^0(\Gamma)\longrightarrow\op{Pic}(\Gamma)\stackrel{c_1}\longrightarrow H^2(\Gamma,\mathbb Z)\to0,
 \]
and for elliptic curves $\op{Pic}^0(\Gamma)=\op{Div}^0(\Gamma)\simeq\Gamma$, $x\mapsto x-x_0$,
whence $\op{Pic}(\Gamma)\simeq\Gamma\oplus\mathbb Z$. 

The summand $\mathbb Z$ in $\op{Pic}(\Gamma)$ corresponds to divisors $m\cdot x_0$, $m\in\mathbb Z$, $x_0\in\Gamma$. However for topologically nontrivial line bundles, $m=c_1(L)\neq0$, the algebra $\g$
does not possess a lift to $L$. Indeed, such a lift would define a flat connection, at which point we
can use the formula $c_1(L)=\Bigl[\frac{-1}{2\pi i}\op{tr}R_\nabla\Bigr]$.
Alternatively, denoting by $\pi:\C\to\Gamma$ the quotient-projection by the lattice $\langle1,\omega_1\rangle$,
the pullback $\pi^*L$ is trivial and can be identified with $\C^2(x,u)$, on which $\mathbb Z^2$ acts through 
the above $h_1,h_2$. Invariance of the lift $\p_x+f(x)\,u\p_u$ gives periodicity $f(x+1)=f(x)$ and the constraint
$f(x+\omega_1)-f(x)=2\pi im$, which are incompatible unless $m=0$.

It is easy to see that $\mathfrak{M}_\g(\Gamma)=0$. Moreover 
$T_\g(\Gamma)=\C$ as it corresponds to 0-cochains $c_\alpha e^{sx}\in\mathcal{O}(U_\alpha)$
modulo local constants $\{c_\alpha\}$ (so the quotient coordinate is $s$). This can be also identified with 
 $\Hmod^1(\g,\mathcal{O}(M))=\C$ generated by global 1-form $dx$ on $\Gamma$.

We conclude: 
 \[
\op{Pic}^{\op{red}}_\g(\Gamma)=\Gamma,\quad
\op{Pic}_\g(\Gamma)=\C^2/\mathbb Z^2.
 \]
Note that the equivariant Picard group can be identified with 
 $\Hcech^1(M,(\mathcal{O}^\times)^\g) =(\C^\times)^2$ but simultaneously 
it corresponds to trivial one-dimensional bundle, with fibers $\C(c)$, over $\Gamma$.
This fits well the commutative diagram of Figure \ref{fig:cohom}.
 \end{example}
 
Corollary \ref{cor:inj} gives a sufficient condition for $\Phi_1$ to be injective. For a connected algebraic group $G$,   Mumford's
Proposition 1.4 in \cite{M} gives a sufficient condition for the map $\mathrm{Pic}_G(M)\to\mathrm{Pic(M)}$ 
to be injective, in terms of non-existence of a homomorphism $G \to GL(1,\mathbb C)$. 
(The group of $G$-equivariant line bundles will be discussed in Section \ref{sect:Gequivar}.)
This does not straight-forwardly adapt to the infinitesimal analytic setting, 
yet below we obtain a result inspired by that of Mumford. 

Let $\g_p\subset\g$ denote the isotropy algebra of the point $p\in M$: 
 \[ 
\g_p = \{X \in \g \mid X_p = 0\}.
 \]
Let $\HH^k_{dR}(M)$ denote the holomorphic de Rham cohomology of $M$.
It is known that in the affine case (for Stein manifolds) as well as for the compact K\"ahler case
this coincides with the singular cohomology $\HH^k(M,\C)$, see \cite{Gr,GH}. 
In general, the holomorphic de Rham cohomology $\HH^k_{dR}(M)$ is equal to 
the hypercohomology $\mathbb{H}^k(M,\Omega^\bullet_M)$ of the sheaf of holomorphic forms on $M$.

 \begin{lemma} \label{lem:stab}
Let $\g$ be a transitive Lie algebra of vector fields on a manifold $M$. Define
$Z=\{[\lambda]\in \Hmod^1(\g, \mathcal{O}(M)) \mid \lambda(Y)_p=0\,\forall Y \in \g_p,\forall p\in M\}$
(the defining property is representative-independent). 
Then we have a natural embedding $Z\hookrightarrow \HH^1_{dR}(M)$.

In particular, if $\HH^1_{dR}(M)=0$ then every $[\lambda]\in Z$ is exact:  
$\lambda=d^{0,0}\mu$ for some $\mu\in \ker(\delta^{0,0})$.
 \end{lemma}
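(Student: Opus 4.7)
The idea is to use transitivity of $\g$ to convert the cocycle $\lambda$ into a genuine holomorphic $1$-form $\omega$ on $M$, and then identify the Chevalley–Eilenberg coboundary relation with de Rham exactness.

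First, given a representative $\lambda$ of $[\lambda]\in Z$, I would define $\omega$ pointwise as follows. Because $\g$ is transitive, the evaluation map $\op{ev}_p\colon \g\to T_pM$, $X\mapsto X_p$, is surjective with kernel $\g_p$. For $v\in T_pM$ pick any $X\in\g$ with $X_p=v$ and set
\[ \omega_p(v) := \lambda(X)(p). \]
The defining condition of $Z$ makes this well-defined, since $X_p=X'_p$ implies $X-X'\in\g_p$, whence $\lambda(X-X')(p)=0$. The same condition is invariant under $\lambda\mapsto \lambda+d^{0,0}\mu$, because for $Y\in\g_p$ one has $(d^{0,0}\mu)(Y)(p)=Y_p(\mu)/\mu(p)=0$, so the property of cutting out $Z$ is representative-independent as claimed.

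Next, I would verify that $\omega$ is a holomorphic closed $1$-form. Holomorphy is local: around any $p$ pick $X_1,\dots,X_n\in\g$ forming a frame on some neighbourhood $U$ (possible by transitivity); by construction the components of $\omega$ in this frame are $\lambda(X_i)\in\mathcal{O}(U)$, and the change-of-basis to a coordinate coframe is a holomorphic matrix. Closedness follows from the tensorial identity
\[ d\omega(X,Y) = X(\omega(Y))-Y(\omega(X))-\omega([X,Y]) = X(\lambda(Y))-Y(\lambda(X))-\lambda([X,Y]) = (d^{1}\lambda)(X,Y)=0 \]
for $X,Y\in\g$, since $\omega(X)=\lambda(X)$ on $\g$ and $\lambda$ is a CE cocycle; as $\g$ spans each $T_pM$, $d\omega$ vanishes identically.

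Then I would check that $[\lambda]\mapsto [\omega]$ descends to a well-defined homomorphism $Z\to \HH^1_{dR}(M)$: shifting $\lambda$ by $d^{0,0}\mu$ shifts $\omega$ by $d\log\mu$, which is exact. For injectivity (and the final assertion when $\HH^1_{dR}(M)=0$): if $[\omega]=0$, write $\omega=d\nu$ with $\nu\in\mathcal{O}(M)$; then $\mu:=e^{\nu}\in\mathcal{O}^\times(M)$ is globally defined, hence automatically lies in $\ker(\delta^{0,0})$, and for each $X\in\g$
\[ \lambda(X) = \omega(X) = X(\nu) = X(\mu)/\mu = (d^{0,0}\mu)(X), \]
so $\lambda=d^{0,0}\mu$. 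The main obstacle, and the step where the hypothesis defining $Z$ is used essentially, is the well-definedness of $\omega$ at the level of fibres; once $\omega$ is built, closedness, holomorphy, and the translation between CE-coboundaries and de Rham exactness are straightforward.
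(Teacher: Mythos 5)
Your proposal is correct and follows essentially the same route as the paper: use transitivity plus the vanishing of $\lambda$ on isotropy subalgebras to define a pointwise $1$-form $\omega_p(X_p)=\lambda(X)_p$, deduce closedness from the Chevalley--Eilenberg cocycle identity via the Cartan formula, and match CE-coboundaries $d^{0,0}\mu$ with exact forms $d\log\mu$ to get the embedding and the final exactness claim. Your extra checks (holomorphy of $\omega$ via a local frame from $\g$, and representative-independence of the condition defining $Z$) are details the paper leaves implicit, but the argument is the same.
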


 \begin{proof}
 Consider $[\lambda] \in Z$. Since its representative $\lambda$ is defined globally on $M$, the value $\lambda(X)_p \in \mathbb C$ is well-defined for any $p \in M$ 
and any $X \in \g$. Transitivity of $\g$ implies that 
for every $v \in T_p M$ there exists $X\in\g$ satisfying $X_p=v$, which means that
the following linear function on $T_p M$ is well-defined:
 \[ 
\alpha_p \colon X_p \mapsto \lambda(X)_p.
 \]
This gives a well-defined 1-form $\alpha$ on $M$.
Choosing $X_i\in\g$ for $v_i\in T_pM$ such that $v_i=(X_i)_p$ we get for every $p\in M$:
 \begin{align*}
(d \alpha)_p(v_1, v_2) &= d \alpha (X_1,X_2)_p = 
\left( X_1(\alpha(X_2)) -X_2(\alpha(X_1)) - \alpha([X_1,X_2])\right)_p
= d^{1,0} \lambda(X_1,X_2)_p = 0.
 \end{align*}
Thus $d\alpha=0$, so the closed 1-forms in two cohomologies correspond; the same clearly concerns exact 1-forms.
This yields the embedding $[\lambda]\mapsto[\alpha]$.

If $\HH^1_{dR}(M)=0$ then $\alpha=d\log\mu$ for some $\mu\in\mathcal{O}^\times(M)$, 
and therefore $\lambda=d^{0,0}\mu$.
 \end{proof}

 \begin{prop}\label{prop:stab}
Let $\g$ be a transitive Lie algebra of vector fields on a manifold $M$ such that
$ \dim \Hmod^1(\g,\mathcal{O}(M))>\dim\HH^1_{dR}(M)$.
This holds, for instance, when $  \Hmod^1(\g,\mathcal{O}(M))\neq 0$ and $\HH^1_{dR}(M)=0$.
Then for all $p\in M$ there exists a surjective Lie algebra homomorphism $\g_p\to\mathfrak{gl}(1,\mathbb C)$.
 \end{prop}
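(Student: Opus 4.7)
The plan is to use Lemma~\ref{lem:stab} to find a cohomology class $[\lambda]$ with a nonvanishing value on some isotropy, define the desired homomorphism by evaluating $\lambda$, and then extend to all points via transitivity. Concretely, Lemma~\ref{lem:stab} supplies an embedding $Z \hookrightarrow \HH^1_{dR}(M)$, so the dimensional hypothesis forces $\dim Z \leq \dim \HH^1_{dR}(M) < \dim \Hmod^1(\g,\mathcal{O}(M))$, making $Z$ a proper subspace. I would pick any representative $\lambda$ of a class in $\Hmod^1(\g,\mathcal{O}(M)) \setminus Z$; by the defining property of $Z$, there must then exist a point $p_0 \in M$ and an element $Y_0 \in \g_{p_0}$ with $\lambda(Y_0)_{p_0} \neq 0$.

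Next, for each $p \in M$ I would define the linear map $\phi_p\colon \g_p \to \C$ by $\phi_p(Y) := \lambda(Y)_p$. This is well defined on the cohomology class: a coboundary $d^{0,0}\mu$ evaluated on $Y \in \g_p$ equals $Y(\mu)/\mu|_p$, which vanishes because $Y_p = 0$. The map $\phi_p$ is a Lie algebra homomorphism into the abelian $\mathfrak{gl}(1,\C)$, since the cocycle identity $d^{1,0}\lambda = 0$ applied to $Y_1, Y_2 \in \g_p$ and evaluated at $p$ gives $\lambda([Y_1,Y_2])_p = Y_1(\lambda(Y_2))_p - Y_2(\lambda(Y_1))_p$, and both terms on the right vanish because $(Y_i)_p = 0$. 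At $p_0$ the map is surjective since $\phi_{p_0}(Y_0) \neq 0$.

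Finally, to obtain a surjective homomorphism at an arbitrary $p \in M$, I would transport $\phi_{p_0}$ using transitivity. Since $\g$ is closed under its own Lie bracket, its local flows preserve $\g$ and conjugate isotropies: a composition $\varphi$ of flows of elements of $\g$ taking $p_0$ to $p$ induces a Lie algebra isomorphism $\varphi_*\colon \g_{p_0} \xrightarrow{\sim} \g_p$, and then $\phi_{p_0}\circ\varphi_*^{-1}$ is a surjective homomorphism $\g_p \to \mathfrak{gl}(1,\C)$. I expect this transport step to be the main technical obstacle, because one needs to ensure that finitely many (local, or analytically continued) flows of vector fields in $\g$ suffice to connect $p_0$ to every $p \in M$. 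For finite-dimensional $\g$ or complete actions this is routine; in general it should follow from a standard argument showing that each $\g$-pseudoorbit is open, hence all of $M$ by connectedness.
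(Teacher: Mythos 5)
Your proof is correct and follows essentially the same route as the paper: both use Lemma~\ref{lem:stab} plus the dimension hypothesis to produce a class $[\lambda]\notin Z$, evaluate $\lambda$ on the isotropy to get the homomorphism, and invoke conjugacy of isotropies under transitivity. The only cosmetic difference is that you verify the homomorphism property by evaluating the cocycle identity at $p$ directly, whereas the paper obtains it geometrically by restricting the lifted algebra $\g^\lambda$ to the fiber over $p$ — the same computation in different clothing.
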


 \begin{proof}
Consider an element $[\lambda]\in\Hmod^1(\g,\mathcal{O}(M)) \setminus Z$ 
(by our assumption and Lemma \ref{lem:stab} this set is nonempty). 
Since $\delta^{1,0} \lambda=0$, $\lambda$ defines a global lift of $\g$ to the trivial bundle 
$M \times \mathbb C$:
\[ \g^\lambda = \{ X + \lambda(X) u \partial_u \mid X \in \g\}.\]
For any point $p \in M$, all vector fields of the Lie subalgebra
 \[
\{Y + \lambda(Y) u \partial_u \mid Y \in \g_p\} \subset \g^\lambda 
 \]
are tangent to the fiber $\pi^{-1}(p)$. Therefore the restriction to the fiber results in a Lie algebra homomorphism:
 \begin{equation}\label{Y456}
\g_p\ni Y \mapsto \lambda(Y)_p u \partial_u \in \mathfrak{gl}(1,\C).
 \end{equation}
By definition of $Z$, $\lambda(Y)_p\not\equiv0$ for a generic point $p$.   
Thus, at this point \eqref{Y456} is surjective. 
Since, for a transitive $\g$, the isotropies $\g_p$ at different points $p$ are conjugate, the claim follows.
 \end{proof}

The existence of a surjective Lie algebra homomorphism to $\mathfrak{gl}(1,\mathbb C)$ implies that 
$\g_p$ has an ideal of codimension 1, the kernel of \eqref{Y456}. 
Corollary \ref{cor:inj} then leads to the following statement.

 \begin{cor}
Let $\g$ be a transitive Lie algebra of vector fields on a manifold $M$ with $\HH^1_{dR}(M)=0$. 
If $\g_p$ does not have an ideal of codimension 1, then $\mathrm{Pic}_\g(M) \to \mathrm{Pic}(M)$ is injective.
 \end{cor}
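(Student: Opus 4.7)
The proof proposal is essentially to assemble the two preceding results: Proposition \ref{prop:stab} (contrapositively) and Corollary \ref{cor:inj}(i). Concretely, I would argue in three short steps.

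First, I would unpack the isotropy condition. A surjective Lie algebra homomorphism $\g_p \to \mathfrak{gl}(1,\mathbb C)$ is the same as a codimension-one ideal of $\g_p$, because $\mathfrak{gl}(1,\mathbb C)$ is the abelian one-dimensional Lie algebra: its kernel is automatically an ideal of codimension one, and conversely any codimension-one ideal $\mathfrak{h} \subset \g_p$ yields an abelian one-dimensional quotient, which is canonically identified with $\mathfrak{gl}(1,\mathbb C)$. So the hypothesis ``$\g_p$ has no ideal of codimension $1$'' is exactly ``there is no surjective homomorphism $\g_p \to \mathfrak{gl}(1,\mathbb C)$''.

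Next, I would apply the contrapositive of Proposition \ref{prop:stab}: if no such surjection exists for (any, hence all) $p\in M$, then necessarily
\[
\dim \Hmod^1(\g,\mathcal{O}(M)) \le \dim \HH^1_{dR}(M).
\]
Combining this with the assumption $\HH^1_{dR}(M)=0$ forces $\Hmod^1(\g,\mathcal{O}(M))=0$.

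Finally, I would invoke Corollary \ref{cor:inj}(i), which says precisely that $\Phi_1\colon \mathrm{Pic}_\g(M) \to \mathrm{Pic}(M)$ is injective if and only if $\Hmod^1(\g,\mathcal{O}(M))=0$. The conclusion follows. I do not expect a genuine obstacle here: transitivity of $\g$ (which makes the isotropy condition representative-independent up to conjugacy) is used only through Proposition \ref{prop:stab}, and the rest is bookkeeping with the cohomological criteria already established.
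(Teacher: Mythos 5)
Your proposal is correct and follows exactly the route the paper intends: the corollary is stated immediately after the remark that a surjection $\g_p\to\mathfrak{gl}(1,\mathbb C)$ is equivalent (via its kernel) to a codimension-one ideal, so the contrapositive of Proposition~\ref{prop:stab} forces $\Hmod^1(\g,\mathcal{O}(M))=0$, and Corollary~\ref{cor:inj}(i) then gives injectivity of $\Phi_1$. Nothing is missing.
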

As a consequence, $\Phi_1$ is injective if $\g_p$ is perfect $[\g_p,\g_p]=\g_p$ 
(this also applies to infinite-dimensional Lie algebras $\g$) and,
in particular, if $\g_p$ is semisimple (for finite-dimensional $\g$).

 \begin{example}[Special affine algebra on the plane]
Consider the Lie algebra
 \[ 
\g = \langle \partial_x, \partial_y, y \partial_x, x\partial_y, x \partial_x-y\partial_y\rangle \subset \vf(\mathbb C^2).
 \]
The Lie algebra is transitive with simple isotropy $\g_p$. By Proposition \ref{prop:stab} we have 
$\Hmod^1(\g,\mathcal{O}(M))=0$, implying that $\mathrm{Pic}_\g(\mathbb C^2)\to\mathrm{Pic}(\mathbb C^2)$ 
is injective by Corollary \ref{cor:inj}. Since $\mathrm{Pic}(\mathbb C^2) =0$ it follows that $\mathrm{Pic}_\g(\mathbb C^2) = 0$. 

Next, for the Lie algebra
 \[ 
\mathfrak{h}=\langle y \partial_x, -xy \partial_x-y^2 \partial_y, \partial_x,-x^2 \partial_x-xy\partial_y,2x\partial_x+y\partial_y   \rangle \subset \vf(\mathbb C^2)
 \]
the isotropy of the point $p=0$ 
 \[
\mathfrak{h}_0 = \langle y \partial_x, -xy \partial_x-y^2 \partial_y,-x^2 \partial_x-xy\partial_y,2x\partial_x+y\partial_y \rangle
 \]
is 4-dimensional and solvable, and it has a 3-dimensional ideal. 
In this case $\mathrm{Pic}_{\mathfrak{h}}(\C^2)=\C$.  

Note that $\g$ and $\mathfrak{h}$ can be viewed as the same Lie subalgebra 
$\mathfrak{sl}(2,\C)\ltimes\C^2\subset\mathfrak{sl}(3,\C)\subset\vf(\mathbb CP^2)$
restricted to two different open charts of $\mathbb CP^2$.
 \end{example}

\subsection{Line bundles admitting a transversal lift}\label{sect:invariantdivisors}

We start by recalling some basic information about divisors, cf.\ \cite{GH,H}. Let $\mathcal{O}^\times$ denote 
the multiplicative sheaf of nonvanishing holomorphic functions on a complex manifold $M$, and 
$\mathcal{M}^\times$ the sheaf of meromorphic functions that are not identically zero on $M$.  
A divisor $D$ is a global section of $\mathcal{M}^\times/\mathcal{O}^\times$. It is defined by a collection 
of functions $f_\alpha \in \mathcal{M}^\times(U_\alpha)$ for an open cover $\{U_\alpha\}$ of $M$, such that 
$f_\alpha/f_\beta \in \mathcal{O}^\times(U_\alpha \cap U_\beta)$.  
Any divisor $D$ gives rise to a line bundle, denoted by $[D]$, whose transition functions are given by 
$g_{\alpha \beta} = f_\alpha/f_\beta \in \mathcal{O}^\times(U_\alpha \cap U_\beta)$, and 
the long exact sequence (see \cite{GH}) relates the group of divisors
$\mathrm{Div}(M) := \Hcech^0 (M,\mathcal{M}^\times/\mathcal{O}^\times)$ to the Picard group on $M$:
 \begin{equation}\label{eq:longexactpic}
\cdots\to\Hcech^0(M,\mathcal{M}^\times)\to\op{Div}(M)\to\op{Pic}(M)\to\Hcech^1(M,\mathcal{M}^\times)\to\cdots
  \end{equation}
Here $\Hcech^0(M, \mathcal{M}^\times)$ is the group of  global meromorphic functions on $M$, and
$\op{Div}(M)/\Hcech^0(M,\mathcal{M}^\times)$ is the group of equivalence classes of divisors
(equivalent divisors give equivalent line bundles).

 \begin{definition}\label{def:relinv}
Let $\mathfrak g \subset \vf(M)$ be a Lie algebra of vector fields on $M$. The divisor $D=\{f_\alpha\}$ 
defined on the open cover $\{U_\alpha\}$ of $M$ is a $\g$-invariant divisor if for each $\alpha$
 \[
X(f_\alpha)=\lambda_\alpha(X) f_\alpha, \qquad \forall X \in \g,
 \]
where $\lambda_\alpha \in \g^* \otimes \mathcal O(U_\alpha)$. The group of $\g$-invariant divisors is denoted by 
$\mathrm{Div}_\g(M)$. The collection $\lambda=\{\lambda_\alpha\}$ is called the weight of $D$.
 \end{definition}
It follows that $\mathfrak g$ is tangent to the set of zeros of $D$, and also to the set of poles. In this way $D$ defines a (possibly reducible) invariant hypersurface in $M$.


 \begin{prop} \label{prop:tobundle}
Let $\g \subset \vf(M)$ be a Lie algebra of vector fields on $M$, and let  $D=\{f_\alpha\}$ be a $\g$-invariant 
divisor with weight $\lambda=\{\lambda_\alpha\}$. Set $g_{\alpha \beta} = f_\alpha/f_\beta$ and define 
$g=\{g_{\alpha \beta}\}$. Then the pair $(g, \lambda)$ defines a $\g$-equivariant line bundle $L=[D]$,  which is independent of the choice of representative functions $f_\alpha$.
 \end{prop}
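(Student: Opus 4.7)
The plan is to unpack the definition of a $\g$-equivariant line bundle from Section \ref{sect:Picg}: the pair $(g,\lambda)\in C^{0,1}\times C^{1,0}$ determines such a bundle exactly when $(g,\lambda)\in\ker(\partial^1)$, i.e., when the three conditions $\delta^{0,1}g=0$, $d^{1,0}\lambda=0$, and $d^{0,1}g=\delta^{1,0}\lambda$ are met. So the proof reduces to checking these three identities for the specific $g$ and $\lambda$ arising from the divisor, and then verifying that rescaling $f_\alpha$ by a nonvanishing holomorphic factor shifts $(g,\lambda)$ by an element of $\mathrm{im}(\partial^0)$.

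First, I would verify $\delta^{0,1}g=0$. This is immediate from the telescoping identity $(f_\alpha/f_\beta)(f_\beta/f_\gamma)=f_\alpha/f_\gamma$, which is the defining consistency of $D$ as a divisor. Second, I would verify the Chevalley--Eilenberg cocycle condition $d^{1,0}\lambda_\alpha=0$ on each $U_\alpha$. For $X,Y\in\g$, apply the Leibniz rule to $X(Y(f_\alpha))$ and $Y(X(f_\alpha))$ using $X(f_\alpha)=\lambda_\alpha(X)f_\alpha$; subtracting and comparing with $[X,Y](f_\alpha)=\lambda_\alpha([X,Y])f_\alpha$ gives
\[
X(\lambda_\alpha(Y))-Y(\lambda_\alpha(X))-\lambda_\alpha([X,Y])=0,
\]
which is exactly $d^{1,0}\lambda_\alpha=0$. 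Third, I would verify the compatibility $d^{0,1}g=\delta^{1,0}\lambda$. On $U_\alpha\cap U_\beta$, since $\log g_{\alpha\beta}=\log f_\alpha-\log f_\beta$, one computes
\[
(d^{0,1}g)_{\alpha\beta}(X)=\frac{X(g_{\alpha\beta})}{g_{\alpha\beta}}=\frac{X(f_\alpha)}{f_\alpha}-\frac{X(f_\beta)}{f_\beta}=\lambda_\alpha(X)-\lambda_\beta(X)=(\delta^{1,0}\lambda)_{\alpha\beta}(X),
\]
which is the required compatibility \eqref{eq:compatibility}.

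For independence of the representative, suppose we replace $f_\alpha$ with $\tilde f_\alpha=h_\alpha f_\alpha$ for some $h_\alpha\in\mathcal{O}^\times(U_\alpha)$; this leaves $D$ unchanged as an element of $\Hcech^0(M,\mathcal{M}^\times/\mathcal{O}^\times)$. A direct computation gives $\tilde g_{\alpha\beta}=(h_\alpha/h_\beta)\,g_{\alpha\beta}=g_{\alpha\beta}\cdot(\delta^{0,0}h)_{\alpha\beta}$ and $\tilde\lambda_\alpha(X)=\lambda_\alpha(X)+X(h_\alpha)/h_\alpha=\lambda_\alpha(X)+(d^{0,0}h_\alpha)(X)$. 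Hence $(\tilde g,\tilde\lambda)=(g,\lambda)\cdot\partial^0 h$ in the multiplicative/additive conventions for $C^{0,1}$ and $C^{1,0}$, so the two pairs define the same class in $\HH^1(\mathrm{Tot}^\bullet(C))$, and thus the same element of $\mathrm{Pic}_\g(M)$.

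There is no real obstacle here; the three checks are direct computations once the differentials $\delta^{\bullet,\bullet}$ and $d^{\bullet,\bullet}$ of the double complex from Section \ref{sect:double} are spelled out, and the coboundary observation handles well-definedness. The only subtlety worth flagging is the passage between the multiplicative group law on $C^{0,1}$ (where $\delta^{0,0}$ is a ratio) and the additive law on $C^{1,0}$ (where $d^{0,0}$ is logarithmic differentiation), which is already addressed in Remark \ref{rk:complex} and in the remark on conventions in Section \ref{sect:double}.
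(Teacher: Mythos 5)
Your proposal is correct and follows essentially the same route as the paper: verifying the three components of $\partial^1(g,\lambda)=0$ (divisor consistency, the Chevalley--Eilenberg cocycle condition via the Leibniz rule, and the compatibility condition via the quotient/logarithmic derivative identity), then checking that a change of representative $f_\alpha\mapsto h_\alpha f_\alpha$ shifts $(g,\lambda)$ by $\partial^0 h$. The only cosmetic difference is that the paper phrases the second and third checks by multiplying through by $f_\alpha$ rather than invoking $\log f_\alpha$, which sidesteps any worry about logarithms of meromorphic functions, but your displayed computation is the quotient rule and is equally valid.
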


 \begin{proof} 
To show that the pair defines a $\g$-equivariant line bundle, we must verify that $\partial^1(g,\lambda)=0$. It is clear that $\delta^{0,1} g=0$ since $g_{\alpha \beta}$ 
are transition functions of $[D]$. Next, the condition $d^{1,0} \lambda_\alpha =0$ holds for each $\alpha$ since 
$f_\alpha\not\equiv0$ and for arbitrary vector fields $X,Y\in\g$ we have
 \[
\lambda_\alpha([X,Y]) f_\alpha = [X,Y](f_\alpha) = X(Y(f_\alpha))-Y(X(f_\alpha)) = (X(\lambda_\alpha(Y))-Y(\lambda_\alpha(X)))f_\alpha.
 \]
What remains is to verify that the weights $\lambda_\alpha$ are compatible with the transition functions 
$g_{\alpha \beta}= f_\alpha/f_\beta$. On $U_\alpha \cap U_\beta$ we have
 \[ 
\lambda_\alpha(X) f_\alpha = X(f_\alpha) = X(g_{\alpha \beta} f_\beta) = X(g_{\alpha \beta}) f_\beta + g_{\alpha \beta} X(f_\beta) = \left(\frac{X(g_{\alpha \beta})}{g_{\alpha \beta}}+\lambda_\beta(X)\right) f_\alpha,
 \]
which is equivalent to $\delta^{1,0}\lambda = d^{0,1} g$.

Next, to show that the $\g$-equivariant bundle is independent of representative functions $f_\alpha$ of $D$, take another representative  $\tilde f_\alpha = \mu_\alpha f_\alpha$ with $\mu_\alpha \in \mathcal{O}^\times(U_\alpha)$. This results in an equivalent $\g$-equivariant line bundle $(\{\tilde g_{\alpha \beta}\}, \{\tilde \lambda_\alpha\})$:  $\tilde g_{\alpha \beta} = g_{\alpha \beta} \mu_\alpha/\mu_\beta$ and $\tilde \lambda_\alpha(X) = \lambda_\alpha(X) + X(\mu_\alpha)/\mu_\alpha$ for all $X \in \g$.
 \end{proof}


The lifted Lie algebra $\g^{\lambda} \subset \vf([D])$ is given locally on 
$\pi^{-1}(U_\alpha) \simeq U_\alpha \times \mathbb C$ by
 \[
\g^{\lambda}|_{\pi^{-1}(U_\alpha)} =  \{\hat X|_{\pi^{-1}(U_\alpha)}=X|_{U_\alpha}+\lambda_\alpha(X) u \partial_u \mid X \in \g\}.
 \]
The lift $\g^\lambda$ has exactly the same form as the lifts in Section \ref{sect:2.1}, the only difference 
being that $\lambda$ is now specifically determined by a $\g$-invariant divisor. Thus we have a map
 \[ 
j_\g \colon \mathrm{Div}_\g(M)\to\mathrm{Pic}_\g(M),\qquad
D\mapsto([D], \g^\lambda).
 \]
In general this map is neither injective nor surjective. For instance, the kernel of this map contains all global 
$\g$-invariant functions in $\mathcal{M}^\times(M)$. 
If $\g$ is transitive, then $\mathrm{Div}_\g(M)=0$, but $\mathrm{Pic}_\g(M)$ may be nontrivial
as in Example \ref{ex:sl2P1}. The next example exhibits non-trivial $\mathrm{Div}_\g(M)$.

 \begin{example}[$\mathfrak{sl}(2,\mathbb C) \subset \vf(\mathbb CP^2)$]
The manifold $\mathbb CP^2$ is covered by the three charts
 \begin{align*}
U_3 &= \{[x:y:z] \in \mathbb CP^2 \mid z \neq 0\}, \\ 
U_2 &= \{[x:y:z] \in \mathbb CP^2 \mid y \neq 0\} , \\ 
U_1 &= \{[x:y:z] \in \mathbb CP^2 \mid x \neq 0\},
 \end{align*}
on which coordinates are given respectively by
 \[
(x_1,x_2)=(x/z,y/z), \qquad (y_1,y_3) = (x/y,z/y), \qquad (z_2,z_3) = (y/x,z/x).
 \]
Consider the Lie algebra $\mathfrak{sl}(2,\mathbb C) \subset \mathfrak{sl}(3,\mathbb C)$ given in the respective charts by
 \begin{gather*}
\langle x_2\partial_{x_1}, x_1\partial_{x_2}, x_1\partial_{x_1}-x_2\partial_{x_2} \rangle, \\
\langle\partial_{y_1}, -y_1^2\partial_{y_1}-y_1 y_3\partial_{y_3}, 2y_1\partial_{y_1}+y_3\partial_{y_3} \rangle, \\
\langle -z_2^2\partial_{z_2}-z_2 z_3\partial_{z_3},\partial_{z_2}, -2z_2\partial_{z_2}-z_3\partial_{z_3} \rangle.
 \end{gather*}
A computation shows that the Chevalley-Eilenberg cohomology groups are  
 \[
\HH^1 (\mathfrak{sl}(2,\C),\mathcal{O}(U_3)) = 0, \qquad  
\HH^1 (\mathfrak{sl}(2,\C),\mathcal{O}(U_2)) = \C^2, \qquad 
\HH^1 (\mathfrak{sl}(2,\C),\mathcal{O}(U_1)) = \C^2
 \]
with representative cocycles
 \[ 
\lambda_3 = (0,0,0), \qquad 
\lambda_2= (0, B_1 y_1+B_2 y_3^2,-B_1), \qquad 
\lambda_1 = (C_1 z_2+C_2 z_3^2,0,C_1),
 \]
The holomorphic transition functions, compatible via overdetermined system \eqref{eq:compatibility}, 
exist only for $B_2=C_2=0$, $C_1=B_1=b$, and are given by formulae
 \[ 
g_{32} = A_1 y_3^{b}=A_1 x_2^{-b}, \qquad 
g_{31} = A_2 z_3^{b}=A_2 x_1^{-b}, \qquad 
g_{21} = A_3 z_2^{b} = A_3 y_1^{-b}.
 \]
Requiring $g_{\alpha \beta}$ to be holomorphic gives the further restriction $b\in\mathbb Z$. The constants $A_1, A_2, A_3$ can be set equal to $1$ 
by multiplying with an $\mathfrak{sl}(2,\mathbb C)$-invariant $\delta^{0,0}$-coboundary. We conclude:
 \[ 
\mathrm{Pic}_{\mathfrak{sl}(2,\C)}(\mathbb CP^2)\simeq\HH^1(\mathrm{Tot}^\bullet(C))=\mathbb Z.
 \]
In this case $\mathrm{Div}_{\mathfrak{sl}(2,\C)}(\C P^2)\simeq\mathrm{Pic}_{\mathfrak{sl}(2,\C)} (\C P^2)$. 
The unique divisor $D=\{f_1,f_2,f_3\}$ corresponding to $b\in\mathbb Z$ is given by
 \[
f_1 = z_3^{-b}, \qquad f_2 = y_3^{-b}, \qquad f_3=1.
 \]
 \end{example}

We will now describe an obstruction for the existence of invariant divisors, elaborating upon \cite{FO}. 
The following definition is adapted from \cite{AF} where it was used for group actions.

 \begin{definition}
For a Lie algebra $\g\subset\vf(M)$ and a holomorphic line bundle $\pi \colon L \to M$,
a lift $\hat\g\subset\vf(L)$ is called transversal if generic $\hat \g$-orbits on $L$ 
$\pi$-project biholomorphically.
 \end{definition}

Note that singular orbits of $\hat\g$ may project non-injectively (but indeed surjectively) to $\g$-orbits 
on $M$ (see e.g.\ Example \ref{ex:aff1} below). The following is a reformulation of Theorem~\ref{Th2}.

\begin{prop} \label{prop:orbitdim}
Let $(L,\hat \g)$ be a $\g$-equivariant line bundle over $M$. Suppose $L\in\op{im}(\Psi_1\circ j_\g)$, i.e.,
$L=[D]$ for some $\g$-invariant divisor $D$ with weight $\lambda$ and $\hat \g = \g^\lambda$. 
Then $\hat \g$ is transversal.
 \end{prop}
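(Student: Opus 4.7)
The plan is to exhibit a locally defined $\hat{\g}$-invariant function on $L$ whose level sets are graphs over $M$, and deduce that generic $\hat{\g}$-orbits project biholomorphically to $\g$-orbits.

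First, I would work in a trivialization $\pi^{-1}(U_\alpha) \simeq U_\alpha \times \C$ with fiber coordinate $u$. By construction, the lift is $\hat X = X + \lambda_\alpha(X)\, u\,\p_u$, and by the definition of the weight $\lambda$ the local defining function satisfies $X(f_\alpha) = \lambda_\alpha(X)\, f_\alpha$. I would then define the meromorphic function $\phi_\alpha := u/f_\alpha$ on $\pi^{-1}(U_\alpha)$ and check by direct computation that
\[
\hat X(\phi_\alpha) \;=\; \frac{\hat X(u)}{f_\alpha} - \frac{u\, X(f_\alpha)}{f_\alpha^2} \;=\; \frac{\lambda_\alpha(X)\,u}{f_\alpha} - \frac{u\,\lambda_\alpha(X)\,f_\alpha}{f_\alpha^2} \;=\; 0
\]
for every $X\in\g$. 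Moreover, since the fiber coordinates satisfy $u_\alpha = g_{\alpha\beta}u_\beta$ and the defining functions satisfy $f_\alpha = g_{\alpha\beta} f_\beta$ on overlaps, the local invariants glue: $\phi_\alpha = \phi_\beta$ on $\pi^{-1}(U_\alpha \cap U_\beta)$. Thus $\phi$ is a globally defined meromorphic $\hat\g$-invariant on $L$.

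Second, I would use this invariant to constrain orbits. Fix a generic point $(p, u_0) \in L$, meaning $p \notin \op{supp}(D)$, so that $f_\alpha(p)$ is finite and nonzero for an $\alpha$ with $p \in U_\alpha$. Setting $c = u_0/f_\alpha(p)$, the $\hat\g$-orbit through $(p,u_0)$ is contained in the level set
\[
\{\phi_\alpha = c\} \;=\; \{(q,u)\in\pi^{-1}(U_\alpha) : u = c\, f_\alpha(q)\},
\]
which is precisely the graph of the local section $q \mapsto c\, f_\alpha(q)$. Hence $\pi$ restricts to this level set as a biholomorphism onto $U_\alpha$. Since every vector field $\hat X\in\hat\g$ projects to $X\in\g$, the orbit surjects onto the $\g$-orbit through $p$, and the containment in the graph forces this projection to be injective. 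Consequently $\pi$ restricts to the orbit as a biholomorphism onto the $\g$-orbit, which is transversality.

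The only subtlety, and the one place the argument can fail, is exactly where $\phi_\alpha$ becomes indeterminate: over the support of $D$ (including zeros and poles of $f_\alpha$) and along the zero section of $L$. These are proper analytic subvarieties of $L$, so they do not obstruct the property on the generic locus; and indeed the definition of transversality only demands the conclusion for generic orbits, leaving room for $\hat\g$-orbits supported on the divisor to project non-injectively onto singular $\g$-orbits. Thus the whole proof reduces to the invariance computation $\hat X(u/f_\alpha)=0$ and a clean interpretation of the corresponding level sets as local graph sections, with no deeper obstacle to overcome.
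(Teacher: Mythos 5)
Your proof is correct and follows essentially the same route as the paper: the key step in both is the computation $\hat X(u/f_\alpha)=0$, exhibiting $u/f_\alpha$ as an absolute invariant of the lift. The paper concludes more tersely by comparing generic orbit dimensions, whereas you spell out the biholomorphism via the graph/level-set interpretation --- an observation the paper itself records in the remark immediately following its proof --- so there is no substantive difference.
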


 \begin{proof}
Let $D=\{f_\alpha\}$ be a $\mathfrak g$-invariant divisor with weight $\lambda=\{\lambda_\alpha\}$, 
and $[D]$ the corresponding line bundle defined by transition functions $g_{\alpha \beta} = f_\alpha/f_\beta$. 
Any element $\hat X \in \g^\lambda$ takes on $U_\alpha$ the form $\hat X|_{U_\alpha} = X|_{U_\alpha} + \lambda_\alpha(X) u \partial_u$ for some $X \in \g$. A straight-forward computation shows that
 \[ 
\hat X(u/f_\alpha) = \frac{\hat{X}(u) f_\alpha - u \hat{X}(f_\alpha)}{f_\alpha^2} = \frac{u \lambda_\alpha(X) f_\alpha-u X(f_\alpha)}{f_\alpha^2} =0.
 \]
Thus, the function $u/f_\alpha$ on $U_\alpha \times \mathbb C$ is a meromorphic absolute invariant 
(constant on $\g^\lambda$-orbits). It follows that the dimension of generic $\mathfrak g$-orbits on $U_\alpha$ 
is equal to the dimension of generic $\g^\lambda$-orbits on $U_\alpha \times \mathbb C$. 
This holds simultaneously on each $U_\alpha$, and therefore globally on $M$.
 \end{proof}

 \begin{remark}
Local absolute invariants $u/f_\alpha$ define a collection of local sections tangent to $\g^\lambda$, 
which are given by $u=C f_\alpha$ with $C$ being an absolute $\g$-invariant. 
Choosing $C$ to be a global invariant on $M$ gives a global $\g^\lambda$-invariant section of $[D]$.
 \end{remark}

Returning to Example \ref{ex:sl2P1} on $\mathfrak{sl}(2,\C)\subset\vf(\C P^1)$, we observe
that generic orbits of any nontrivial lift are 2-dimensional. Thus there are no nontrivial invariant divisors,
which also follows from the fact that $\mathfrak{sl}(2,\mathbb C)$ is transitive on $\mathbb CP^1$. 
Here is another demonstration of Proposition \ref{prop:orbitdim}.

 \begin{example}[$\mathfrak{aff}(1,\mathbb C) \subset \vf(\mathbb CP^1)$] \label{ex:aff1}
Consider again coordinate charts $U_0 \simeq \mathbb C^1(x)$ and $U_\infty \simeq \mathbb C^1(y)$
of $\mathbb CP^1$, with the Lie subalgebra $\g=\mathfrak{aff}(1,\C)=\langle X,Y\rangle\subset\mathfrak{sl}(2,\C)$ given by
 \[ 
X|_{U_0}=\partial_x,\ Y|_{U_0}=x\partial_x, \qquad X|_{U_\infty}=-y^2\partial_y,\ Y|_{U_\infty}=-y\partial_y.
 \]
General representatives $\lambda_s$ of elements in $\HH^1(\mathfrak{aff}(1,\C),\mathcal{O}(U_s))$,
for $s=0,\infty$, in basis $(X,Y)$ are given by
 \[ 
\lambda_0=(0,A), \qquad  \lambda_\infty=(B_2 y,B_1),\qquad A,B_1,B_2\in\C.
 \]
A general compatible transition function exists only when $A=B_1-B_2$, in which case it is 
 cohomologous to $g_{0 \infty}=y^{B_2}=x^{-B_2}$. Requiring $g_{0 \infty}$ to be holomorphic 
results in $B_2\in\mathbb Z$. The local lifts corresponding to $\lambda_0$ and $\lambda_\infty$ are given by
 \[  
\g^{\lambda_0} = \langle \partial_x, x\partial_x + (B_1-B_2) u \partial_u \rangle, \qquad  
\g^{\lambda_\infty} = \langle -y^2 \partial_y + B_2 y u \partial_u, -y \partial_y + B_1 u \partial _u\rangle.
 \]
It is clear that the generic orbit dimension is 1 if and only if $B_2=B_1$. 
In this case we get the invariant divisor $D$ given by $f_0=1$ and $f_\infty=y^{-B_1}$.
Thus 
 \[
\op{Div}_\g(\C P^1)=\mathbb{Z}\subsetneqq\C\times\mathbb{Z}=\op{Pic}_\g(\C P^1).
 \]
Note that the map $\Psi_1\colon \mathrm{Pic}_\g(\C P^1)\to\mathrm{Pic}(\C P^1)$ is not injective:
$\ker(\Psi_1) \simeq \HH_d^1(\HH_\delta^0(C^{\bullet,\bullet}))=\C$.  Similar to Example \ref{ex:sl2P1}, we have $\mathfrak{M}_\g(M) = \mathbb C$ even though $\mathfrak{M}_\g(\{U_0, U_\infty\}) = \mathbb Z$. 
 \end{example}

Proposition \ref{prop:orbitdim} can be viewed as a global version of \cite[Th.\ 5.4]{FO}. 
According to it, locally, in smooth regular case the statement allows a converse,
giving a criterion for the (local) existence of relative invariants.
Globally, in general analytic context, there is no converse to Proposition \ref{prop:orbitdim}, due to
other reasons for non-existence of  meromorphic invariant divisors/relative invariants.  
This is shown in the following simple example, and also in a more complicated example of Section \ref{sect:projectivecurves}. Yet, in the following section, we will give a converse statement 
in the algebraic context.

 \begin{example} \label{ex:transversality}
Consider the Lie algebra  $\g=\langle x^2\partial_x\rangle\subset\vf(\C)$. All line bundles over $\C$ are trivial, 
$\mathrm{Pic}_\g(\C)=0$, while we have $\mathrm{Pic}_\g(\C)=\C^2$. A general representative cocycle of $\Hmod^1(\g,\mathcal{O}(\mathbb C))$
has the form $\lambda(x^2\partial_x)=A+Bx$ with $A,B\in\C$, and the corresponding lifted Lie algebra is
 \[ 
\g^{\lambda} = \langle x^2\partial_x+(A+Bx)u\partial_u\rangle\subset\vf(\C\times\C).
 \] 
Generic orbits of both $\g$ and $\g^\lambda$ are 1-dimensional for any choice of $A$ and $B$, thus $\g^\lambda$
is transversal. However, the general solution of the system $x^2\partial_x(f(x)) = (A+Bx) f(x)$ is
 \[ 
f(x)=x^B e^{-A/x}.
 \]
This is a (meromorphic) $\g$-invariant divisor on $\mathbb C$ only when $A=0$ and $B \in \mathbb Z$,
i.e., $\op{Div}_\g(\C P^1)=\mathbb{Z}$ and not all equivariant line bundles come from invariant divisors.
 \end{example}

\subsection{Lie group vs Lie algebra approach} \label{sect:Gequivar}

Let $G$ be a Lie group acting on $M$. We consider the group $\mathrm{Pic}_G(M)$ of $G$-equivariant line 
bundles over $M$.  In the setting of algebraic schemes, it was studied in \cite[Ch. 1.3]{M}. 
Here we give a different description of $\mathrm{Pic}_G(M)$ emphasizing its relation to $\mathrm{Pic}_\g(M)$ when $\g$ is the Lie algebra of vector fields corresponding to the Lie group action, 
but demonstrate that, in general, $\mathrm{Pic}_G(M)$ is not isomorphic to $\mathrm{Pic}_\g(M)$.

\begin{definition}
 A lift $\hat \rho$ of a group action $\rho \colon G \times M \to M$ to a line bundle $\pi\colon L \to M$ is a map $\hat \rho \colon G \times L \to L$ such that $\rho_g \colon L \to L$ is a vector bundle automorphism for each $g \in G$ and the following diagram commutes:
\[\begin{tikzcd}
	{G \times L} & L \\
	{G\times M } & M
	\arrow["\pi", from=1-2, to=2-2]
	\arrow["{\hat \rho}", from=1-1, to=1-2]
	\arrow["\rho", from=2-1, to=2-2]
	\arrow["{\mathrm{id}\times \pi}"', from=1-1, to=2-1]
\end{tikzcd}\]
The pair $(\pi\colon L\to M,\hat \rho)$ is called a $G$-equivariant line bundle. The space of such bundles, modulo the natural equivalences, has the group structure with the operation of tensor product. The group of $G$-equivariant line bundles  $\mathrm{Pic}_G(M)$ is called the $G$-equivariant Picard group. 
 \end{definition}

We assume $G$ acts by biholomorphisms on $M$. 
The general description of $G$-equivariant line bundles over $M$ can be done in terms of a cohomology theory that 
generalizes both the \v{C}ech cohomology and the Lie group cohomology with 
coefficients in the $G$-module $\mathcal{O}^\times (M)$.

Let $\pi \colon L \to M$ be a line bundle. Assume there exists a lift of the group action to $L$, i.e.\ 
for each $\varphi \in G$ there exists a (holomorphic) vector bundle automorphism $\hat \varphi$ on $L$, 
satisfying $\pi(\hat \varphi(p)) = \varphi(\pi(p))$ for each $p \in L$  (to simplify formulas, we use the notation $\varphi = \rho_g$ and $\hat \varphi = \hat \rho_g$). 
Let $\mathcal{U}=\{U_\alpha\}$ be a trivializing chart for $L$, and $u_\alpha$ be a (linear) 
fiber coordinate on $\pi^{-1}(U_\alpha) \simeq U_\alpha \times \mathbb C$. 
Then $\hat \varphi$ acts on $u_\alpha$ in the following way:
 \begin{equation}\label{eq:Lambda}
\hat \varphi^*(u_\alpha) = \Lambda_{\alpha \beta}(\varphi) u_\beta, \qquad \Lambda_{\alpha \beta}(\varphi) 
\in  \mathcal{O}^\times\left(U_\beta \cap \varphi^{-1}(U_\alpha)\right). 
 \end{equation}
Composing with a second element in the Lie group gives
 \[
\hat \psi^*(\hat \varphi^*(u_\alpha)) = \psi^*(\Lambda_{\alpha \beta}(\varphi)) 
\Lambda_{\beta \gamma}(\psi) u_\gamma
 \]
on $U_\gamma \cap \psi^{-1}(U_\beta \cap \varphi^{-1}(U_\alpha))$.
Simultaneously, on $U_\gamma \cap \psi^{-1}(\varphi^{-1}(U_\alpha))$, we have
 \[ 
(\hat \psi^* \circ \hat \varphi^*)(u_\alpha) = \Lambda_{\alpha \gamma} (\varphi \circ \psi) u_\gamma.
 \]
Thus on $U_\gamma \cap\psi^{-1}(U_\beta) \cap\psi^{-1}(\varphi^{-1}(U_\alpha))$ we get:
 \begin{equation}\label{eq:Lambdacocycle}
\psi^*(\Lambda_{\alpha \beta}(\varphi)) \Lambda_{\beta \gamma}(\psi) =  
\Lambda_{\alpha \gamma} (\varphi \circ \psi). 
 \end{equation}
When $\varphi$ is equal to the identity transformation on $M$, equation \eqref{eq:Lambda} gives 
$\Lambda_{\alpha \beta}(\mathrm{id}) = g_{\alpha \beta}$, where $g_{\alpha \beta}$ is the transition function 
of $\pi$ on $U_\alpha \cap U_\beta$. Setting $\varphi=\mathrm{id}$ in \eqref{eq:Lambdacocycle} gives
 \[
\Lambda_{\alpha \gamma}(\psi) = \psi^*(g_{\alpha \beta})\Lambda_{\beta \gamma}(\psi)
 \]
while setting $\psi=\mathrm{id}$ leads to
 \[ 
\Lambda_{\alpha \gamma} (\varphi) = \Lambda_{\alpha \beta}(\varphi) g_{\beta \gamma}.
 \]
The last equality shows that if the transition functions are given, then $\Lambda_{\alpha \beta}(\varphi)$ on 
$U_\beta \cap U_\alpha \cap \varphi^{-1}(U_\alpha)$ is uniquely determined by $\Lambda_{\alpha \alpha}(\varphi)$. 

Next, changing the fiber coordinates $v_\alpha = \mu_\alpha u_\alpha$ for 
$\mu_\alpha \in \mathcal{O}^\times(U_\alpha)$, 
 \[
\hat \varphi^*(\mu_\alpha) \Lambda_{\alpha \beta}(\varphi) u_\beta= \hat \varphi^*(\mu_\alpha u_\alpha)=
\hat \varphi^*(v_\alpha)= \tilde \Lambda_{\alpha \beta}(\varphi) v_\beta= 
\tilde \Lambda_{\alpha \beta} (\varphi) \mu_\beta u_\beta,
 \]
results in the equivalence relation
 \[ 
\Lambda_{\alpha \beta}(\varphi) \sim \frac{\varphi^*(\mu_\alpha)}{\mu_\beta} \Lambda_{\alpha \beta}(\varphi).
 \]
Introducing the differentials 
 \begin{align*}
(D^0 \mu (\varphi))_{\alpha \beta} &= \frac{\varphi^*(\mu_\alpha)}{\mu_\beta}, \\
(D^1 \Lambda(\varphi, \psi))_{\alpha \beta \gamma} &= \frac{\Lambda_{\alpha \gamma}(\varphi \circ \psi)}{\psi^*(\Lambda_{\alpha \beta}(\varphi)) \Lambda_{\beta \gamma} (\psi)},
 \end{align*}
we see that $\Lambda=\{\Lambda_{\alpha \beta}\}$ defines a $G$-equivariant line bundle over $M$ 
if and only if $D^1 \Lambda=0$. Moreover, the $G$-equivariant line bundles defined by $\Lambda$ and 
$\tilde \Lambda$ are equivalent if and only if $\tilde \Lambda= \Lambda \cdot D^0 \mu$ for some $\mu$. 
We define the action group cohomology for a given cover $\mathcal{U}$ 
 \[
\HH^1_{\mathcal{U}}(G,\mathcal{O}^\times)= \frac{\ker(D^1)}{\mathrm{im}(D^0)}
 \]
and in general we use the direct limit of this cohomology,
$\HH^1(G,\mathcal{O}^\times):=\varinjlim \HH^1_{\mathcal{U}}(G,\mathcal{O}^\times)$.

 \begin{prop}
The group $\mathrm{Pic}_G(M)$ of $G$-equivariant line bundles is isomorphic to $H^1(G,\mathcal{O}^\times)$.
 \end{prop}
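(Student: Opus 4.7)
The plan is essentially to observe that the construction preceding the proposition already produces the desired bijection, and to organize it as a proof by verifying the three standard ingredients: well-definedness, the group structure, and a two-sided inverse.

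First, I would define the forward map $\mathrm{Pic}_G(M) \to \HH^1(G,\mathcal{O}^\times)$. Given a $G$-equivariant line bundle $(\pi\colon L\to M,\hat\rho)$, choose a trivializing cover $\mathcal{U}=\{U_\alpha\}$ and fiber coordinates $u_\alpha$, and set $\Lambda_{\alpha\beta}(\varphi)$ by \eqref{eq:Lambda}. The relation \eqref{eq:Lambdacocycle} is exactly $D^1\Lambda=0$, and this identity is just the chain rule for the pullback action combined with $\hat{\varphi\circ\psi}=\hat\varphi\circ\hat\psi$. Rescaling $u_\alpha=\mu_\alpha v_\alpha$ changes $\Lambda$ by $D^0\mu$, so the class $[\Lambda]\in \HH^1_{\mathcal{U}}(G,\mathcal{O}^\times)$ is independent of fiber coordinates; passage to a common refinement of two trivializations shows independence of the cover itself and hence gives a well-defined element of the direct limit $\HH^1(G,\mathcal{O}^\times)$. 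An isomorphism of $G$-equivariant line bundles is exactly a collection of $\mu_\alpha\in\mathcal{O}^\times(U_\alpha)$ intertwining the cocycles, so equivalence classes map injectively to cohomology classes.

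For the group structure, recall that on $\mathrm{Pic}_G(M)$ the operation is tensor product. If $(L,\hat\rho)$ and $(L',\hat\rho')$ correspond to cocycles $\Lambda$ and $\Lambda'$, then in the fiber coordinate $u_\alpha\otimes u'_\alpha$ on $L\otimes L'$ we have $(\hat\varphi\otimes\hat\varphi')^*(u_\alpha\otimes u'_\alpha)=\Lambda_{\alpha\beta}(\varphi)\Lambda'_{\alpha\beta}(\varphi)\, u_\beta\otimes u'_\beta$, which is the multiplicative group law on cocycles; the trivial bundle with trivial lift gives the identity, and the dual bundle gives the inverse.

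To finish I need surjectivity, which is the main constructive step. Given $\Lambda=\{\Lambda_{\alpha\beta}(\varphi)\}$ with $D^1\Lambda=0$, define $g_{\alpha\beta}:=\Lambda_{\alpha\beta}(\mathrm{id})$; specializing the cocycle identity to $\varphi=\psi=\mathrm{id}$ shows $g_{\alpha\beta}g_{\beta\gamma}=g_{\alpha\gamma}$, so these are transition functions of a holomorphic line bundle $\pi\colon L\to M$. On each $\pi^{-1}(U_\alpha)\simeq U_\alpha\times\C$ with coordinate $u_\alpha$, define $\hat\varphi$ through $\hat\varphi^*(u_\alpha)=\Lambda_{\alpha\beta}(\varphi)u_\beta$; the consistency of this formula under change of chart index is \eqref{eq:Lambdacocycle} specialized to $\psi=\mathrm{id}$, and the fact that $\hat\varphi\circ\hat\psi=\hat{\varphi\circ\psi}$ is precisely \eqref{eq:Lambdacocycle} in full generality. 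Finally $\hat{\mathrm{id}}=\mathrm{id}_L$ follows from $\Lambda_{\alpha\alpha}(\mathrm{id})=g_{\alpha\alpha}=1$ after, if necessary, rescaling by a representative in the same cohomology class. The hardest point to state carefully is this last verification together with the independence under refinement of $\mathcal{U}$: one must check that refinement is compatible with the $G$-action, i.e.\ that one can pass to a cover of the form $\{U_\alpha\cap\varphi^{-1}(U_\beta)\}$ when needed so that all formulas make sense; this is harmless because the direct limit absorbs such refinements, but it is the one place where being sloppy about domains of definition of $\Lambda_{\alpha\beta}(\varphi)$ would cause trouble.
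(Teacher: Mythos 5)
Your proposal is correct and follows essentially the same route as the paper, which leaves the proof implicit in the construction of $\Lambda_{\alpha\beta}$, the differentials $D^0,D^1$, and the direct limit immediately preceding the proposition; you simply make explicit the verifications (well-definedness, compatibility with tensor product, and the reconstruction of $(L,\hat\rho)$ from a cocycle) that the paper's discussion already contains. No gaps.
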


Let us note that we consider not abstract, but rather continuous (van Est) group cohomology, cf.\ \cite{F}. 
In fact, the above specifies cochains to be holomorphic.

 \begin{remark}
For a trivial line bundle we get the Lie group cohomology $\HH^1(G,\mathcal{O}^\times(M))$
of the Lie group $G$ with the values in the module $\mathcal{O}^\times(M)$.
On the other hand, with $\varphi$ and $\psi$ being $\op{id}_M$, the above definition gives the \v{C}ech cohomology 
$\Hcech^1(M,\mathcal{O}^\times)$ of $M$ with the values in the sheaf $\mathcal{O}^\times$.
Thus  $\mathrm{Pic}_G(M)$ interpolates between the two cohomologies.
 \end{remark}

Any Lie group action gives rise to a Lie algebra $\mathfrak g$ of vector fields. 
Consider a one-parameter group $\varphi_t \subset G$, and the corresponding vector field $X$. 
Denote the vector field on $L$ corresponding to  $\hat \varphi_t$ by $\hat X$. For small $t$ the set 
$U_\alpha \cap \varphi_t^{-1}(U_\alpha)$ is nonempty, and on this set we have
 \[ 
\hat \varphi_t^*(u_\alpha) = \Lambda_{\alpha \alpha}(\varphi_t) u_\alpha.
 \]
When $t$ approaches $0$, then $U_\alpha \cap \varphi_t^{-1}(U_\alpha)$ approaches $U_\alpha$, 
and the Lie derivative of $u_\alpha$ with respect to $\hat X$ on $U_\alpha$ is given by
 \[ 
L_{\hat X}(u_\alpha) = \frac{d}{dt} \Big|_{t=0} \Lambda_{\alpha \alpha}(\varphi_t) u_\alpha.
 \]
Comparing this to the lifts $\hat X=X +\lambda_{\alpha}(X) u_\alpha \partial_{u_\alpha}$ 
discussed in Section \ref{sect:2.1} results in the relation
 \[ 
\lambda_\alpha(X) =  \frac{d}{dt} \Big|_{t=0} \Lambda_{\alpha \alpha}(\varphi_t).
 \]
Thus a $G$-equivariant line bundle on $M$ yields a $\g$-equivariant line bundle for $\g=\op{Lie}(G)$. 
However, the map
 \[ 
\mathrm{Pic}_G(M) \to \mathrm{Pic}_\g(M)
 \]
in general is neither injective nor surjective. 
Non-injectivity is illustrated by an action of a discrete group, like $\mathbb Z_m:z\mapsto z^m$ on $\C P^1$.
Non-surjectivity is demonstrated as follows.

 \begin{example}[Projective action revisited]\label{ExgG}
The Lie groups $SL(2,\C)$ and $PGL(2,\C)$ act on $\C P^1$ by M\"obius transformations.
In the open cover given by charts $U_0\simeq\C(x)$ and $U_\infty\simeq\C(y)$ the action is
 \[ 
\varphi=\left(\begin{matrix} a & b \\ c & d \end{matrix}\right):\quad
\varphi^*(x) = \frac{ax+b}{cx+d}, \qquad \varphi^*(y) = \frac{dy+c}{by+a}.
 \]
For $SL(2,\mathbb C)$ the lifts are given by 
 \[ 
\hat\varphi^*(u_0)=\frac{u_0}{(cx+d)^A}, \qquad \hat\varphi^*(u_\infty) = \frac{u_\infty}{(by+a)^A},
 \]
where $A\in \mathbb Z$, as in Example \ref{ex:sl2P1}. On the other hand, for  $PGL(2,\mathbb C)$ the lifts are given by
 \[ 
\hat\varphi^*(u_0)=\frac{(ad-bc)^{A/2} u_0}{(cx+d)^A}, \qquad \hat\varphi^*(u_\infty) = \frac{(ad-bc)^{A/2} u_\infty}{(by+a)^A},
 \]
which is well-defined if and only if $A=2m\in 2\mathbb Z$.
In other words, the line bundle 
$\mathcal{O}_{\mathbb CP^1}(1)$ is not $PGL(2,\mathbb C)$-equivariant, 
but $\mathcal{O}_{\mathbb CP^1}(2)$ is.

This example, borrowed from \cite[Ch.\ 1.3]{M}, works in any dimension $n$: the line bundle
$\mathcal{O}_{\C P^n}(k)$ is $PGL(n+1,\C)$-equivariant iff $k\in(n+1)\mathbb Z$, i.e., 
this group lifts only to the powers of the canonical bundle $K_{\C P^n}$. 
On the other hand, all bundles $\mathcal{O}_{\C P^n}(k)$ are $SL(n+1,\C)$-equivariant. 
(Note that the center of $SL(n+1,\C)$ is $\mathbb Z_{n+1}$ and $PGL(n+1,\C)=SL(n+1,\C)/\mathbb Z_{n+1}$.)
This difference can not be seen at the Lie algebra level, 
since the two Lie group actions give rise to the same Lie algebra of vector fields.
Summarizing we have:
 \[
\op{Pic}_{SL(n+1,\C)}(\C P^n)=  \op{Pic}_{\mathfrak{sl}(n+1,\C)}(\C P^n)=\mathbb Z= \op{Pic}(\mathbb CP^n)
\supset (n+1)\mathbb Z=\op{Pic}_{PGL(n+1,\C)}(\C P^n).
 \]
 \end{example}

 Note that in this example both groups $PGL(n+1,\C)$ and $SL(n+1,\C)$ are algebraic, so this example illustrates a general result in \cite[Cor.\,1.6]{M} on $G$-linearization of high powers $L^m$ of an algebraic line bundle $L$. Next we discuss a similar effect for invariant divisors. 
 
Recall that an algebraic Lie algebra is $\g=\op{Lie}(G)$ for an algebraic Lie group $G$.  If $M$ is an algebraic variety, we  call a Lie algebra $\g \subset \vf(M)$ algebraic if it is the Lie algebra of an algebraic action by an algebraic Lie group on $M$.   The following is a converse to Proposition \ref{prop:orbitdim}
in the algebraic context (there is a version of this statement for $\op{Pic}_G(M)$).

 \begin{theorem}\label{th:orbitdimR}
Let $(L,\hat \g) \in \mathrm{Pic}_\g(M)$ be a $\g$-equivariant line bundle over an algebraic variety $M$ for an algebraic Lie algebra $\g$ of vector fields. 
Assume that the lift $\hat \g$ is algebraic and transversal.
Then there exists an integer $m\in\mathbb Z_+$ such that $L^m\in \op{im}(\Phi_1\circ  j_\g)$, i.e.,
$L^m=[D]$ for some invariant divisor $D$ with weight $\lambda$, and $\hat \g=\g^{\lambda/m}$. 
 \end{theorem}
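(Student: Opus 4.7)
The plan is to reverse the logic of Proposition \ref{prop:orbitdim}: starting from transversality, produce a rational $\hat\g$-semi-invariant on $L$ that is homogeneous of some positive integer degree $m$ in the fiber coordinate, and read off its divisor on $M$. Let $\hat G$ be an algebraic group with $\op{Lie}(\hat G)=\hat\g$ acting algebraically on $L$ and covering an algebraic $G$-action on $M$ with $\op{Lie}(G)=\g$. Since $\hat G$ acts on $L$ by vector bundle automorphisms, it commutes with the fiberwise $\C^\times$-scaling $t\cdot u=tu$, and the two actions combine into an algebraic $\hat G\times \C^\times$-action on $L$.

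Next I would invoke Rosenlicht's theorem twice. Transversality forces generic $\hat G$-orbits on $L$ to have the same dimension as generic $G$-orbits on $M$, so $\op{trdeg}_\C \C(L)^{\hat G}=\op{trdeg}_\C \C(M)^{G}+1$. Since the $\C^\times$-factor acts along the fibers transversely to generic $\hat G$-orbits, one also obtains $\C(L)^{\hat G\times \C^\times}=\pi^{*}\C(M)^{G}$. Hence $\C^\times$ acts on the transcendence-degree-one extension $\C(L)^{\hat G}/\pi^*\C(M)^{G}$ with $\pi^*\C(M)^G$ as fixed subfield.

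The main step, and what I expect to be the principal obstacle, is to show that this extension is generated by a $\C^\times$-semi-invariant of positive integer weight $m$. Over a dense open subset $M^\circ\subset M$ on which the $G$-action has closed orbits, $L\setminus 0\to M$ is a principal $\C^\times$-bundle, and it is trivialisable in the $\C^\times$-equivariant sense by a Hilbert 90-type argument, using that the character lattice of $\C^\times$ is $\mathbb{Z}$. This yields an $\hat G$-invariant rational function $F\in \C(L)^{\hat G}$ which in local trivialisations has the form $F|_{\pi^{-1}(U_\alpha)}=\phi_\alpha u_\alpha^m$ for some $m\in\mathbb{Z}_+$ (replacing $F$ by $F^{-1}$ if needed) and some $\phi_\alpha\in\mathcal{M}^\times(U_\alpha)$. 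Compatibility of $F$ on overlaps forces $\phi_\alpha=g_{\alpha\beta}^{-m}\phi_\beta$, so the local data $f_\alpha:=\phi_\alpha^{-1}$ satisfy $f_\alpha/f_\beta=g_{\alpha\beta}^m$ and define a meromorphic section of $L^m$. Consequently $D:=\{f_\alpha\}\in\op{Div}(M)$ satisfies $[D]=L^m$.

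To finish, write the original lift as $\hat X|_{U_\alpha}=X+\mu_\alpha(X)u\p_u$, so that $\mu$ is the $\g$-weight of $\hat\g$. The invariance $\hat X(F)=0$ gives $X(\phi_\alpha)=-m\,\mu_\alpha(X)\,\phi_\alpha$, equivalently $X(f_\alpha)=m\,\mu_\alpha(X)\,f_\alpha$, so $D$ is a $\g$-invariant divisor with weight $\lambda:=m\mu$, while the original lift is $\hat\g=\g^\mu=\g^{\lambda/m}$, as required. The delicate point throughout is that pure transcendence-degree estimates would only produce a $\C^\times$-semi-invariant of some rational weight; turning this into an integer weight genuinely requires the algebraicity assumption on $\hat\g$ via the rationality of $\C^\times$-characters.
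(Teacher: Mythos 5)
Your proposal is correct in substance and reaches the same conclusion, but it extracts the homogeneous invariant by a genuinely different mechanism than the paper. The paper's proof also starts from Rosenlicht (transversality gives a rational absolute invariant $I$ on $L$ with $\p_u I\not\equiv0$), but then works purely infinitesimally: it expands $I$ in a Laurent series in the fiber variable $u$, uses that $u\p_u$ commutes with $\hat\g$ so that all $(u\p_u)^rI$ are again invariants, and exploits the simple spectrum of $u\p_u$ on monomials (together with rationality in $u$) to conclude that each Laurent term $h_k(x)u^k$ is separately invariant; the term of lowest nonzero degree $m$ is then the sought $I_\alpha=u_\alpha^m/f_\alpha$. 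You instead pass to the group level, form the commuting $\hat G\times\C^\times$-action, compare transcendence degrees of invariant fields via Rosenlicht, and realize $\C(L)^{\hat G}$ as the function field of a one-dimensional $\C^\times$-variety over $\pi^*\C(M)^G$, which is split by Hilbert 90. Your route buys a more conceptual explanation of where the integer $m$ comes from — it is the order of the generic stabilizer $\mu_m$ of the scaling action on the Rosenlicht quotient $L/\!/\hat G$, equivalently the index of the weight sublattice generated by $\C^\times$-semi-invariants in $\C(L)^{\hat G}$ — and it makes the chart-independence of $m$ automatic. The paper's route is more elementary (no group quotients, no torsors) and stays closer to the Lie-algebra formalism used throughout; its "lowest degree term" is the same object but its well-definedness across charts has to be asserted separately.

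One spot in your write-up is loose and should be repaired. The sentence asserting that "$L\setminus0\to M$ is a principal $\C^\times$-bundle ... trivialisable in the $\C^\times$-equivariant sense by Hilbert 90" does not, as written, produce a $\hat G$-\emph{invariant} function: a rational trivialization of $L\setminus0$ over $M$ only yields a fiber coordinate of weight one with no equivariance under $\hat G$. The torsor you actually need to trivialize lives over the quotient: the generic fiber of $(L\setminus0)/\!/\hat G\to M/\!/G$ is a torsor under $\C^\times/\mu_m\simeq\mathbb{G}_m$ over the field $\C(M)^G$, and Hilbert 90 applied there gives the generator $F$ with $t\cdot F=t^mF$; note that if generic $\hat\g$-orbits projected literally bijectively the stabilizer would be trivial and you would be forced to $m=1$, contradicting the example following the theorem, so it is essential that you only use the dimension count from transversality (as you in fact do). With that correction the remaining steps — homogeneity of degree $m$ forcing $F|_{\pi^{-1}(U_\alpha)}=\phi_\alpha u_\alpha^m$, the cocycle identity $f_\alpha/f_\beta=g_{\alpha\beta}^m$ for $f_\alpha=\phi_\alpha^{-1}$, and the weight computation $X(f_\alpha)=m\mu_\alpha(X)f_\alpha$ — are correct and match the paper's conclusion.
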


 \begin{proof}
Since $\g$ is transversal, it admits on $L$ an absolute invariant $I=I(x,u)$, with $x$ coordinate on $M$
and $u$ a fiber coordinate on $L$, such that $\p_u(I)\not\equiv0$. This complements absolute invariants $J=J(x)$
obtained by pullback from $M$. By Rosenlicht's theorem \cite{R2} the algebraicity of the action implies that 
the invariant $I$ can be chosen rational in proper (local) variables $x,u$ (on $U_\alpha$ with algebraic overlaps).
Decompose $I$ into its Laurent series by the fiber variable $u$
 \begin{equation}\label{LS}
I=\sum_{k=-N}^\infty h_k(x)u^k.
 \end{equation}
Since $[u\p_u,\hat\g]=0$ we get that $(u\p_u)^r(I)$ is an absolute invariant for every $r$.
The spectrum of the operator $u\p_u$ on generators $u^k$ is simple, and due to rationality the coefficients of $I$
are determined by a finite number of base functions $h(x)$. Thus every term in the series \eqref{LS}
is an absolute invariant. Choose such invariant of the lowest (in absolute value) degree by $u$.
This degree $m$ does not depend on local coordinate chart $U_\alpha,\alpha\in A$, we are using, and we get:
 \[
I_\alpha=\frac{u_\alpha^m}{f_\alpha(x)}\quad\ \Longrightarrow\ \quad
1=\frac{I_\alpha}{I_\beta}=\frac{u_\alpha^m/f_\alpha(x)}{u_\beta^m/f_\beta(x)}
=g^m_{\alpha\beta}\frac{f_\beta}{f_\alpha}\quad\text{ on }\quad U_\alpha\cap U_\beta.
 \]
The collection of functions $\{f_\alpha\in\mathcal{O}(U_\alpha):\alpha\in A\}$ defines a $\g$-invariant divisor $D$
with weight $\lambda_\alpha(X)=X(\log f_\alpha)$, $X\in\g$, and the corresponding line bundle $[D]$
has transition functions
 \[
\tilde{g}_{\alpha\beta}= \frac{f_\alpha}{f_\beta}=g_{\alpha\beta}^m.
 \]
Thus $[D]=L^m$ and the claim follows.
 \end{proof}
 

 \begin{example}
Consider the Lie algebra $\g=\langle x\p_x\rangle$ on $\C$ and its lift 
$\hat\g=\langle x\p_x+C u\p_u\rangle$ on the trivial line bundle $\C\times\C$.
It is algebraic if $C=\frac{p}{q}\in\mathbb Q$ with absolute invariant $I=\frac{u^m}{x^{Cm}}$
being algebraic for minimal $m=q$, i.e., $I=\frac{u^q}{x^p}$.
Such a situation occurs for differential invariants of curves in Euclidean plane with respect to the motion group, 
namely for the ``square of the curvature'', see the end of Introduction in \cite{KL}.  The $\g$-equivariant line bundle $(\mathbb C \times \mathbb C, \hat \g)$ is in $\mathrm{im}(j_\g)$ if and only if $C \in \mathbb Z$.
 \end{example}

\section{Invariant polynomial divisors on algebraic bundles}\label{sect:bundles}

In this section we will consider Lie algebras of vector fields on bundles that have additional structure on the fibers, 
and where it makes sense to consider divisors that are polynomial in the fiber coordinates. More precisely, 
we will focus on affine bundles in Section \ref{sect:aff} and on jet bundles in Section \ref{sect:jets}. 
In the remaining three subsections, we will apply the obtained results to examples involving Lie algebras 
of vector fields on jet spaces.

\subsection{Lie algebra action on affine bundles}\label{sect:aff}

Let $\pi \colon E \to M$ be an affine bundle (of rank $r\ge1$), and let $\hat \g \subset \vf_{\mathrm{proj}}(E)$ 
be a Lie algebra of projectable vector fields on $E$ that preserves the affine structure in the fibers. 
In this section we will focus on $\hat \g$-invariant divisors whose restriction to fibers are polynomials. We define for $U \subset M$
 \[ 
\mathfrak{P}(U) = \{f \in \mathcal{O}(\pi^{-1}(U)) \mid  f|_{\pi^{-1}(p)} 
\text{ is a polynomial for every } p \in U\},
 \]
where polynomiality is checked in affine coordinates on $E$. 
This space of functions is preserved by automorphisms: 
If $\varphi \colon E \to E$ is an automorphism of affine bundles and 
$f \in \mathfrak{P}(U)$, then $\varphi^*f \in \mathfrak{P}(\varphi^{-1}(U))$.

Assume that $\mathcal{U}=\{U_\alpha\}$ is an open cover for $M$ such that 
$\pi^{-1}(U_\alpha)\simeq U_\alpha\times\C^r$ for any affine bundle $\pi$. Then $\{\pi^{-1}(U_\alpha)\}$ 
is open cover for the total space $E$.
 
 \begin{definition}
A divisor $D=\{f_\alpha\}$ on an affine bundle $\pi\colon E \to M $ is called polynomial if its defining functions 
$f_\alpha \in \mathcal{O}( \pi^{-1}(U_\alpha))$ can be chosen to be in $\mathfrak{P}(U_\alpha)$.
 \end{definition}

In local coordinates $x^i, u^j$ on $\pi^{-1}(U_\alpha) \simeq U_\alpha \times \mathbb C^r$, 
$f_\alpha$ takes the form
 \[ 
f_\alpha = \sum_{|\sigma|\leq s} F_{\sigma}(x) u^{\sigma}, \qquad F_\sigma \in \mathcal{O}(U_\alpha).
 \]
Here $u^\sigma=\prod(u^i)^{m_i}$ for the multi-index $\sigma=(m_1\dots m_r)$.
The defining functions of a polynomial divisor $D$ satisfy $g_{\alpha \beta} = f_\alpha/f_\beta \in \mathcal{O}^\times(\pi^{-1}(U_\alpha) \cap \pi^{-1}(U_\beta))$, where both the numerator and denominator are polynomials 
in $u^1, \dots, u^r$. It follows that the polynomials must cancel each other out, which implies that the transition 
functions $g_{\alpha \beta} = f_\alpha/f_\beta$ are the pullback of functions 
$\tilde g_{\alpha \beta}\in\mathcal{O}^\times(U_\alpha \cap U_\beta)$. Thus we obtain the following proposition.

 \begin{prop} \label{prop:13}
Let $D=\{f_\alpha\}$ be a polynomial divisor on the affine bundle $\pi \colon E \to M$. 
Then $[D] = \pi^*L$ for some line bundle $L \to M$.
 \end{prop}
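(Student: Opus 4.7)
The plan is to exploit the polynomiality of the defining functions to show that each transition function $g_{\alpha\beta} = f_\alpha/f_\beta$ of $[D]$ is constant along the fibers of $\pi$, and hence is the pullback of a function on the base; the line bundle $L$ is then reconstructed from the descended cocycle.

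First I would fix a point $p \in U_\alpha \cap U_\beta$ and pass to the fiber $\pi^{-1}(p) \simeq \C^r$. Because $f_\alpha \in \mathfrak{P}(U_\alpha)$ and $f_\beta \in \mathfrak{P}(U_\beta)$, the restrictions $P := f_\alpha|_{\pi^{-1}(p)}$ and $Q := f_\beta|_{\pi^{-1}(p)}$ lie in $\C[u^1,\dots,u^r]$, while $P/Q$ is the restriction of $g_{\alpha\beta} \in \mathcal{O}^\times(\pi^{-1}(U_\alpha \cap U_\beta))$ and is therefore holomorphic and nowhere zero on $\C^r$. The central step is the algebraic lemma: a rational function on $\C^r$ with neither poles nor zeros is a nonzero constant. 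Writing $P/Q = \tilde P/\tilde Q$ in lowest terms in $\C[u^1,\dots,u^r]$, holomorphy forces $\tilde Q$ to have empty zero locus, which by the Nullstellensatz (or by restricting to a generic affine line and applying the fundamental theorem of algebra) makes $\tilde Q$ a nonzero constant; then $\tilde P$ is a polynomial without zeros, hence also constant.

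Consequently $g_{\alpha\beta}$ is constant on every fiber above $U_\alpha \cap U_\beta$, and being holomorphic on the total space it descends to a unique $\tilde g_{\alpha\beta} \in \mathcal{O}^\times(U_\alpha \cap U_\beta)$ with $g_{\alpha\beta} = \pi^*\tilde g_{\alpha\beta}$. The \v{C}ech cocycle identity $g_{\alpha\beta} g_{\beta\gamma} = g_{\alpha\gamma}$ then passes immediately to $\{\tilde g_{\alpha\beta}\}$, which defines a line bundle $L \to M$ whose pullback has transition functions agreeing with those of $[D]$, giving $[D] = \pi^*L$.

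The main obstacle is exactly the algebraic lemma about nonvanishing rational functions on $\C^r$; the descent of the cocycle and verification of $[D] = \pi^*L$ are then purely formal. A minor subtlety is that $f_\beta$ might vanish identically on a proper analytic subset of base points, so that the pointwise restriction argument breaks down there; but since $g_{\alpha\beta}$ is given a priori as a holomorphic and nonvanishing function on the whole of $\pi^{-1}(U_\alpha \cap U_\beta)$, the identity $g_{\alpha\beta} = \pi^*\tilde g_{\alpha\beta}$ extends across this subset by the Riemann removable-singularity theorem, so no real difficulty arises.
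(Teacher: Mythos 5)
Your argument is correct and follows essentially the same route as the paper: restrict the nonvanishing holomorphic transition function $g_{\alpha\beta}=f_\alpha/f_\beta$ to a fiber, observe that a zero- and pole-free ratio of polynomials on $\C^r$ is constant, and descend the cocycle to the base. The paper compresses the key algebraic step into the phrase ``the polynomials must cancel each other out,'' which you usefully make explicit (lowest terms plus the Nullstellensatz), and your remark about fibers where $f_\beta$ vanishes identically handles a point the paper leaves implicit.
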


In the above argument, it is clear that the degree $s$ can be taken to be the same for each $\alpha$. We call the smallest such $s$ the degree of the polynomial divisor $D$.

Next we let $\hat \g \subset \vf_{\mathrm{proj}}(E)$ be a projectable Lie algebra of vector fields on $E$ 
preserving the affine structure on fibers, and consider $\hat \g$-invariant polynomial divisors.

 \begin{prop} \label{prop:aff}
Let $\pi \colon E \to M$ be a an affine bundle and let $\hat \g \subset \vf_{\mathrm{proj}}(E)$ be a projectable 
Lie algebra of vector fields on $E$ preserving the affine structure on fibers; $\g=d\pi(\hat \g)$. 
If $D$ is a $\hat \g$-invariant polynomial divisor on $E$, then $[D]=\pi^* L$ for some 
$\g$-equivariant line bundle $L \to M$.
 \end{prop}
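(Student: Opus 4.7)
The plan is to bootstrap from Proposition \ref{prop:13}: that result already produces a holomorphic line bundle $L\to M$ and transition functions $\tilde g_{\alpha\beta}\in\mathcal{O}^\times(U_\alpha\cap U_\beta)$ with $f_\alpha/f_\beta=\pi^*\tilde g_{\alpha\beta}$. What remains is to produce a weight $\lambda=\{\lambda_\alpha\}\in\prod_\alpha\g^*\otimes\mathcal{O}(U_\alpha)$ so that $(\{\tilde g_{\alpha\beta}\},\lambda)\in\ker(\partial^1)$ in the double complex $C^{\bullet,\bullet}(\g,\mathcal{U})$, making $L$ a $\g$-equivariant line bundle. Concretely, if $\hat\lambda_\alpha\in\hat\g^*\otimes\mathcal{O}(\pi^{-1}(U_\alpha))$ is the weight of $D$ as a $\hat\g$-invariant divisor, the plan is to show that $\hat\lambda_\alpha(\hat X)$ is constant on fibers of $\pi$, i.e.\ equals $\pi^*\lambda_\alpha(X)$ for $X=d\pi(\hat X)$.

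First I would unpack the affine-preservation hypothesis. In affine fiber coordinates $u^1,\dots,u^r$ on $\pi^{-1}(U_\alpha)\simeq U_\alpha\times\C^r$, every $\hat X\in\hat\g$ takes the form
\[
\hat X=X^i(x)\p_{x^i}+\bigl(A^j_k(x)u^k+B^j(x)\bigr)\p_{u^j},
\]
with $X=d\pi(\hat X)=X^i\p_{x^i}$, because preservation of the affine structure forces the $u$-dependence of the vertical component to be at most linear. Each summand sends a polynomial of degree $\leq s$ in $u$ to one of degree $\leq s$. Hence, writing $f_\alpha=\sum_{|\sigma|\leq s}F_\sigma(x)u^\sigma$ in its minimal polynomial representation (so that the $u$-degree is exactly $s$), the function $\hat X(f_\alpha)$ is again polynomial in $u$ of degree $\leq s$.

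The central step is the following rationality/degree argument showing that $\hat\lambda_\alpha(\hat X)=\hat X(f_\alpha)/f_\alpha$ is independent of $u$. For fixed $x\in U_\alpha$ at which $f_\alpha(x,\cdot)$ has $u$-degree exactly $s$, and for a generic direction $v\in\C^r$, restrict to the line $t\mapsto(x,tv)$: then $\hat X(f_\alpha)(x,tv)$ is a polynomial in $t$ of degree $\leq s$ and $f_\alpha(x,tv)$ is a polynomial in $t$ of degree exactly $s$. Their ratio is rational in $t$ but must be holomorphic on all of $\C$ (since $\hat\lambda_\alpha(\hat X)$ is holomorphic on $\pi^{-1}(U_\alpha)$), hence polynomial in $t$ of degree $\leq s-s=0$, i.e.\ constant in $t$. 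Letting $v$ and $x$ vary, continuity gives that $\hat\lambda_\alpha(\hat X)$ is constant along fibers, so $\hat\lambda_\alpha(\hat X)=\pi^*\lambda_\alpha(X)$ for a unique $\lambda_\alpha(X)\in\mathcal{O}(U_\alpha)$. Via the Lie algebra isomorphism $d\pi\colon\hat\g\to\g$, this yields $\lambda_\alpha\in\g^*\otimes\mathcal{O}(U_\alpha)$.

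To conclude, I would transport the identities $d^{1,0}\hat\lambda_\alpha=0$ and $\hat\lambda_\alpha-\hat\lambda_\beta=d^{0,1}g_{\alpha\beta}$ (valid by Proposition \ref{prop:tobundle} applied to $\hat\g$ on $E$) through the pullback $\pi^*$; injectivity of $\pi^*$ on holomorphic functions then produces $d^{1,0}\lambda_\alpha=0$ and $\lambda_\alpha(X)-\lambda_\beta(X)=X(\log\tilde g_{\alpha\beta})$ on $U_\alpha\cap U_\beta$, so $(\{\tilde g_{\alpha\beta}\},\{\lambda_\alpha\})\in\ker(\partial^1)$ defines the desired $\g$-equivariant structure on $L$ and $[D]=\pi^*L$ in $\op{Pic}_{\hat\g}(E)$. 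The main obstacle is the degree step in the third paragraph: one must rule out that $\hat\lambda_\alpha(\hat X)$ has any genuine fiber dependence, and the one-variable rational-function reduction (combined with the affine form of $\hat X$) is precisely what forces it to vanish.
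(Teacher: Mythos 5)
Your proposal is correct and follows essentially the same route as the paper: write $\hat X$ in its affine form, observe that $\hat X(f_\alpha)$ is again polynomial of degree $\leq s$ in the fiber variables, and conclude that the everywhere-holomorphic ratio $\hat X(f_\alpha)/f_\alpha$ must be fiberwise constant, hence a pullback from $U_\alpha$. Your restriction-to-lines argument just makes explicit the degree bookkeeping that the paper leaves implicit, so no substantive difference.
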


 \begin{proof}
What remains to be proven is that the bundle $\pi \colon L \to M$ with transition functions $\tilde g_{\alpha \beta}$ 
admits a $\g$-lift. In local coordinates $x^i, u^j$ on $\pi^{-1}(U_\alpha) \simeq U_\alpha \times \mathbb C^r$, 
each $X \in \hat \g$ takes the form $X = a^i(x) \partial_{x^i} + (b_0^j(x)+b_l^j(x) u^l) \partial_{u^j}$. 
Consider an invariant divisor $D$ given by $f_\alpha = \sum_{|\sigma| \leq s} F_{\sigma}(x) u^\sigma$, 
a polynomial in $u^1, \dots, u^r$ of degree $s$. We have
 \[ 
X(f_\alpha) = \lambda_\alpha(X) f_\alpha. 
 \]
Looking at the coordinate form of $X$, it is clear that $X(f_\alpha)$ is a polynomial in $u^1, \dots, u^r$ and, 
furthermore, that its degree is $\leq s$. Thus $\lambda_\alpha(X)=X(f_\alpha)/f_\alpha$ is a rational function in 
$u^1, \dots, u^r$, and it is defined everywhere on $U_\alpha \times \mathbb C^r$ only if $\lambda_\alpha(X)$ 
is the pullback of a function $ \tilde \lambda_\alpha(X) \in \mathcal{O}(U_\alpha)$. Thus the $\hat \g$-equivariant 
line bundle over $E$ defined by $(\{g_{\alpha \beta}\}, \{\lambda_\alpha\})$ is the pullback of the 
$\g$-equivariant line bundle over $M$ defined by $(\{\tilde g_{\alpha \beta}\}, \{\tilde \lambda_\alpha \})$.
 \end{proof}

Proposition \ref{prop:aff} (which was reformulated in Theorem \ref{Th3}) is relevant, for instance, for investigation of relative invariants of tensor fields  (and other geometric objects like affine connections) on a manifold $M$ under the action of some Lie algebra $\g \subset \vf(M)$.

 \begin{example}
Consider the bundle $E=S^2 T^* M \to M$ whose sections are symmetric 2-forms on $M$, and let $\g$ be the 
Lie algebra of holomorphic vector fields on $M$. There is a canonical lift $\hat \g \subset \vf(E)$ of $\g$. 
Let $x^1, \dots, x^n$ be coordinates on $M$ and $u_{11}, u_{12}, \dots, u_{nn}$ be the additional induced 
coordinates on $E$.  The function $\det([u_{ij}])$, with $u_{ji}=u_{ij}$ for $i<j$, is the local expression for 
a $\g$-invariant divisor, and its (local) weight is $-2 \mathrm{div}_{dx^1 \wedge \cdots \wedge dx^n}$. 
Globally, the line bundle given by this $\g$-invariant divisor is the pullback of the line bundle 
 $(\Lambda^n T^*M)^{\otimes 2} = (K_M)^{\otimes 2}$ over $M$.
 \end{example}

Let us make a brief remark about invariant rational divisors. Each such is a ratio of two invariant polynomial divisors. The weights of invariant rational divisors form a lattice generated by weights of invariant polynomial divisors. In other words, we have the following relation: 
\begin{equation} \label{eq:divrational}
 \mathrm{Span}_{\mathbb Z} \left(\mathrm{Div}_\g^{\mathrm{pol}}(M) \right) =\mathrm{Div}_\g^{\mathrm{rat}}(M).
 \end{equation}

 \subsection{Lie algebra action on jet bundles}\label{sect:jets}

Now we consider polynomial divisors on jet bundles. Most of the arguments here closely resemble those 
in Section \ref{sect:aff}, but some additional care must be taken. Our introduction to jets will be very brief, 
and we refer to \cite{KL2, KV, O} for a more comprehensive treatment.  

Let $J^k(E,m)$ denote the space of $k$-jets of codimension-$m$ submanifolds of $E$, and $J^k \pi$ the space of 
$k$-jets of sections of the fiber bundle $\pi$. In statements that are true for both $J^k(E,m)$ and $J^k \pi$, 
we will use the notation $\J^k$ which can always be replaced with either of the two (an exception to this convention occurs only in Section \ref{sect:ExC}). There are natural bundle 
structures $\pi_{k,l}\colon \J^k \to \J^l$ for $0 \leq l \leq k$, and $\pi_k\colon J^k \pi \to M$.

Coordinates on  $J^k \pi$ and $J^k(E,m)$  are induced from coordinates on the total space of $\pi$ or $E$, 
respectively. Given a bundle $\pi \colon E \to M$, and an open cover  $\{U_\alpha\}$ of coordinate charts of $E$, the collection $\{\pi_{k,0}^{-1}(U_\alpha)\}$ is 
an open cover of $J^k \pi$. The split coordinates $x^1,\dots x^n, u^1, \dots, u^m$ on $U_\alpha$ 
induce additional canonical coordinates $u_\sigma^j$, $|\sigma|\leq k$ where $\sigma$ is a multi-index, on $\pi_{k,0}^{-1}(U_\alpha)$. 

To get an open cover of $J^k(E,m)$, we let $\{U_\alpha\}$ be an open cover  of coordinate charts of $E$ that trivializes the bundle 
$J^1(E,m) \to E$. On each $U_\alpha$,  for a given set of $m+n$ coordinates, we choose a splitting $x^1,\dots, x^n, u^1, \dots, u^m$, 
and we denote the corresponding coordinate chart on $J^1(E,m)$ by $U_\alpha^{i_1 \cdots i_m}$ with 
$1 \leq i_1 < \cdots < i_m \leq \dim E$. For each way of splitting there is one chart.  
The split coordinates on $U_\alpha$ induce additional canonical coordinates $u_\sigma^i$ ($|\sigma|\leq k$)
on $\pi_{k,1}^{-1}(U_\alpha^{i_1 \cdots i_m})$ for $k \geq 1$. 
The collection $\{\pi_{k,1}^{-1}(U_\alpha^{i_1 \cdots i_m})\}$ is an open cover of $J^k(E,m)$.
We define for $U \subset \J^i$
 \[
\mathfrak{P}_i(U) = \{f \in \mathcal{O}(\pi_{\infty,i}^{-1}(U)) \mid  f|_{\pi_{\infty,i}^{-1}(p)} 
\text{ is a polynomial for every } p \in U\}.
 \]
Polynomiality is defined with respect to the canonical coordinates described above. 
For example, in the case of $J^k(E,m)$, then $f \in \mathfrak{P}_j(U)$ if and only if we have, for each $\alpha$,
 \[
f\Bigr|_{\pi_{\infty,j}^{-1}\left(U \right) \cap \pi_{\infty,1}^{-1}\left(U_\alpha^{i_1 \cdots im}\right)}
 = \sum_{j \leq |\sigma|\leq k, |\tau| \leq r} F^\sigma_\tau u_\sigma^\tau
 \] 
for some $k,r \in \mathbb Z_{\geq 0}$ and a collection $\{F^\sigma_\tau\}$ of functions on $J^j(E,m)$.

If $\varphi \colon E \to E$ is a point transformation and $f \in \mathfrak{P}_i(U)$, then 
$(\varphi^{(\infty)})^* f \in \mathfrak{P}_i(\varphi^{-1}(U))$ for $i \geq 1$. Similarly, if $\pi\colon E \to M$ 
is a fiber bundle,  $\varphi \colon E \to E$ is a fiber-preserving biholomorphism, and $f \in \mathfrak{P}_i(U)$, 
then $(\varphi^{(\infty)})^* f \in \mathfrak{P}_i(\varphi^{-1}(U))$ for $i \geq 0$. In particular, 
point transformations preserve $\mathfrak{P}_1$ while fiber-preserving transformations do the same for 
$\mathfrak{P}_0$. Based on this, we introduce the following notion.

 \begin{definition}
A divisor on $J^k(E,m)$ is called polynomial if its defining functions $f_\alpha^{i_1 \cdots i_m} \in \mathcal{O}(\pi^{-1}_{k,1}
(U_\alpha^{i_1 \cdots i_m}))$ can be chosen to be in $\mathfrak{P}_1(U_\alpha^{i_1 \cdots i_m})$. If $\pi \colon E \to M$ is a fiber bundle, a divisor on 
$J^k \pi$ is called polynomial if its defining functions $f_\alpha \in \mathcal{O}(\pi^{-1}_{k,0}(U_\alpha))$ 
can be chosen to be in $\mathfrak{P}_0(U_\alpha)$.
 \end{definition}

 \begin{prop} \label{prop:15}
{\rm (1)} Let $\pi \colon E \to M$ be a fiber bundle, and let $D=\{f_\alpha\}$ be a polynomial divisor on $J^k \pi$. 
Then $[D] = \pi_{k,0}^*L$ for some line bundle $L \to E=J^0 \pi$.\\
{\rm (2)} Let $E$ be a manifold and $D = \{ f_{\alpha}^{i_1 \cdots i_m} \}$ be a polynomial divisor on $J^k(E,m)$. 
Then $[D] = \pi_{k,1}^* L$ for some line bundle $L \to J^1(E,m)$.
 \end{prop}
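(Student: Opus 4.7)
The plan is to extend the argument of Proposition \ref{prop:13} to the jet-bundle setting, exploiting the fact that in the canonical split coordinates the projections $\pi_{k,0}\colon J^k\pi \to E$ in part (1) and $\pi_{k,1}\colon J^k(E,m) \to J^1(E,m)$ in part (2) have fibers isomorphic to affine spaces $\mathbb{C}^N$, and that the polynomial divisor hypothesis says precisely that the defining functions $f_\alpha$ are polynomial in these fiber coordinates. Concretely, in part (1) the trivializing cover $\{\pi_{k,0}^{-1}(U_\alpha)\}$ has fiber coordinates $\{u^j_\sigma : 1\leq |\sigma|\leq k\}$, and the hypothesis $f_\alpha \in \mathfrak{P}_0(U_\alpha)$ means that $f_\alpha$ is a polynomial in these variables with coefficients in $\mathcal{O}(U_\alpha)$. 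In part (2), the analogous statement holds with fiber coordinates $\{u^j_\sigma : 2\leq |\sigma|\leq k\}$ above each chart $U_\alpha^{i_1\cdots i_m}$ of $J^1(E,m)$.

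With this setup the argument proceeds fiberwise. On an overlap, $g_{\alpha\beta} = f_\alpha/f_\beta$ is holomorphic and nowhere vanishing by the definition of a divisor. Both numerator and denominator are polynomial in the fiber coordinates with coefficients in the structure sheaf of the base, so restricting to a single fiber produces a rational function on $\mathbb{C}^N$ with no zeros or poles, which must be a nonzero constant. Letting the base point vary then gives $f_\alpha = c \cdot f_\beta$ for some $c \in \mathcal{O}^\times$ of the overlap in the base, i.e.\ $g_{\alpha\beta} = \pi_{k,0}^* \tilde g_{\alpha\beta}$ (resp.\ $\pi_{k,1}^* \tilde g_{\alpha\beta}$) for some $\tilde g_{\alpha\beta}$ on the base. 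The cocycle $\{\tilde g_{\alpha\beta}\}$ then defines the desired line bundle $L$ on $E$ (resp.\ on $J^1(E,m)$) with $[D] = \pi_{k,0}^* L$ (resp.\ $[D] = \pi_{k,1}^* L$).

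The step that I expect to require the most care is the covering bookkeeping in part (2): the cover of $J^k(E,m)$ is indexed by pairs $(\alpha,(i_1,\dots,i_m))$, with potentially several charts $U_\alpha^{i_1\cdots i_m}$ lying over the same coordinate chart of $E$ but corresponding to different choices of which $m$ of the $m+n$ coordinates play the role of ``dependent variables''. To glue the resulting cocycle $\{\tilde g_{\alpha\beta}\}$ into a bona fide line bundle on $J^1(E,m)$, one must handle transitions between such charts, where the change-of-split is a genuine point transformation of $E$. Here I would invoke the fact, recorded just before the proposition, that point transformations preserve $\mathfrak{P}_1$: the prolongation of a point transformation preserves polynomiality in the jet variables of order $\geq 2$. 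This ensures that the fiberwise polynomial-ratio argument applies uniformly on all overlaps, completing the proof.
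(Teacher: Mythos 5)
Your proposal is correct and follows essentially the same route as the paper's proof: the paper argues that since $f_\alpha/f_\beta$ is holomorphic and nonvanishing on the overlap, ``the polynomial parts are required to cancel,'' so the transition functions are pullbacks from the base, and you simply make this cancellation explicit by restricting to a fiber $\mathbb{C}^N$ and noting that a nonvanishing entire rational function there is constant. Your extra care about overlaps between different splittings $U_\alpha^{i_1\cdots i_m}$ is a sensible elaboration of a point the paper leaves implicit (handled by the remark that prolonged transformations preserve $\mathfrak{P}_1$), and does not change the argument.
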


 \begin{proof}
The proofs of (1) and (2) are very similar, so we prove only (2). 
Since $f_\alpha^{i_1 \cdots i_m}/f_\beta^{j_1 \cdots j_m}$ are elements in 
$\mathcal{O}^\times(\pi_{k,1}^{-1} (U_\alpha^{i_1 \cdots i_m} \cap U_\beta^{j_1 \cdots j_m}))$, 
the polynomial parts are required to cancel. Thus  the transition functions 
$f_\alpha^{i_1 \cdots i_m}/f_\beta^{j_1 \cdots j_m}$ are the pullback of elements in 
$\mathcal{O}^\times (U_\alpha^{i_1 \cdots i_m} \cap U_\beta^{j_1 \cdots j_m})$, 
which are the transition functions of a line bundle over $J^1(E,m)$.
 \end{proof}

From the proof it follows that the order and degree of the polynomials $f_\alpha^{i_1 \cdots i_m}$ and $f_\beta^{j_1 \cdots j_m}$ agree. Therefore, the order and degree are also well-defined notions for $D = \{f_\alpha^{i_1 \cdots i_m}\}$, and this is true also for divisors on $J^k \pi$. We define the weighted degree of the monomial $c(x,y,y_i) y_{\sigma_1}^{j_1} \cdots y_{\sigma_s}^{j_s} \in \mathfrak{P}_1(U_\alpha^{i_1 \cdots i_m})$ (with $|\sigma_l| \geq 2$ for each $l$) to be $\sum_{l=1}^s |\sigma_l|$, and the weighted degree of a sum of such to be the maximal weighted degree of its monomial parts. The weighted degree can be defined for a divisor in the same way that order and degree were defined above.  (In the case when $E$ is a bundle, the weighted degree also counts the first-order jet variables $y_i^j$.)

Next, consider a Lie algebra of vector fields $\g\subset\vf(\J^0)$;
in the case $\J^0=J^0\pi$ assume also that $\g$ is $\pi$-projectable. 
The Lie algebra prolongs to a unique Lie algebra $\g^{(k)} \subset \vf(\J^k)$, see, for instance, \cite[Sec.\ 1.5]{KL2}. We are interested in polynomial $\g^{(k)}$-invariant divisors on $\J^k$.

 \begin{prop} \label{prop:polynomialonjets}
{\rm (1)} Let $\g$ be a Lie algebra of projectable vector fields on a fiber bundle $\pi\! : E\to M$ and 
let $D$ be a  $\g^{(k)}$-invariant polynomial divisor on $J^k \pi$. Then $[D]=\pi_{k,0}^*L$ for 
some $\g$-equivariant line bundle $L\to E$.\\
{\rm (2)} Let $\g$ be a Lie algebra of point vector fields on $J^0(E,m)$ and $D$ a  $\g^{(k)}$-invariant polynomial
divisor on $J^k(E,m)$. Then $[D]=\pi_{k,1}^*L$ for some $\g^{(1)}$-equivariant line bundle $L\to J^1(E,m)$.
 \end{prop}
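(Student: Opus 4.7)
The plan is to combine Proposition~\ref{prop:15} with an iterated application of Proposition~\ref{prop:aff} along the tower of affine bundles
\[\J^k\xrightarrow{\pi_{k,k-1}}\J^{k-1}\xrightarrow{\pi_{k-1,k-2}}\cdots\xrightarrow{\pi_{i_0+1,i_0}}\J^{i_0},\]
where I write $i_0=0$ in case~(1) and $i_0=1$ in case~(2). Each $\pi_{i+1,i}$ (with $i\geq i_0$, and in case~(2) with $i\geq 1$) is an affine bundle whose fiber coordinates are the jet variables $u^j_\sigma$ with $|\sigma|=i+1$, and the prolongation $\g^{(i+1)}$ is projectable along $\pi_{i+1,i}$ onto $\g^{(i)}$ while preserving the affine structure. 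These are precisely the hypotheses of Proposition~\ref{prop:aff}.

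The key observation enabling the induction is that the polynomial-divisor structure descends one level at a time. Write $f_\alpha=\sum_M c_{M,\alpha}\cdot M$, where $M$ runs over monomials in the top-order jet variables $u^j_\sigma$ with $|\sigma|=k$, and $c_{M,\alpha}$ is polynomial in the lower-order fiber variables $u^j_\tau$ ($i_0<|\tau|<k$) with coefficients on $\J^{i_0}$. On overlaps $f_\alpha/f_\beta\in\mathcal{O}^\times$ is holomorphic in the top-order variables over the whole affine fiber, so the rational-equals-polynomial argument of Proposition~\ref{prop:15} forces the top-order polynomial parts of $f_\alpha$ and $f_\beta$ to match up to a factor independent of $u^j_\sigma$ with $|\sigma|=k$. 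This identifies $[D]$ with the pullback of a line bundle on $\J^{k-1}$ represented by the residual collection $\{c_\alpha\}$, which is again a polynomial divisor (now on $\J^{k-1}$, polynomial in the remaining higher-order jet variables). Proposition~\ref{prop:aff} applied to $\pi_{k,k-1}$ then equips this descended bundle with a $\g^{(k-1)}$-equivariant structure whose weight is inherited from $\lambda_\alpha$ by the same cancellation.

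Iterating, at each level $i$ with $i_0<i\leq k$ one gets a $\g^{(i-1)}$-equivariant polynomial divisor on $\J^{i-1}$ by descending via $\pi_{i,i-1}$, until after $k-i_0$ steps one reaches a $\g^{(i_0)}$-equivariant line bundle $L\to\J^{i_0}$. The composed pullback identification $[D]=\pi_{k,i_0}^*L$ agrees with the line bundle identification already produced by Proposition~\ref{prop:15}, so $L$ is the sought equivariant line bundle.

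The principal obstacle is the verification, at each intermediate level, that polynomial cancellation in the top-order fiber coordinates really does leave both the transition functions and the $\g^{(i)}$-invariance intact on the next lower level. For case~(1) this is a direct bookkeeping exercise because the action of a projectable vector field on the affine fiber is affine-linear with coefficients pulled back from the base. For case~(2), the same bookkeeping works precisely because the induction terminates at $\J^1$, before the $u$-dependence of $a^l(x,u)$ in an arbitrary point transformation would force one to treat the first-order jet variables as polynomial variables rather than as coefficients, which is exactly why the statement produces only a $\g^{(1)}$-equivariant line bundle and not a $\g$-equivariant one.
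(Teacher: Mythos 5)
Your first step is sound, but it is essentially a restatement of Proposition~\ref{prop:15} combined with one application of the mechanism of Proposition~\ref{prop:aff} to the affine bundle $\pi_{k,k-1}$: the cancellation of polynomial parts shows the transition functions descend, and the affine action shows the weight $\lambda_\alpha(X)=X^{(k)}(f_\alpha)/f_\alpha$ is a pullback from $\J^{k-1}$. The gap is in the iteration. What you hold after this step is a $\g^{(k-1)}$-equivariant \emph{line bundle} on $\J^{k-1}$, not a $\g^{(k-1)}$-invariant polynomial \emph{divisor} on $\J^{k-1}$, and Proposition~\ref{prop:aff} takes a divisor as input. Your candidate, ``the residual collection $\{c_\alpha\}$,'' is not well defined: the coefficients $c_{M,\alpha}$ of a fixed monomial $M$ in the top-order variables are mixed among themselves under the affine coordinate changes of $\pi_{k,k-1}$ (whose linear and translation parts depend on the lower-order jets), so they do not satisfy $c_{M,\alpha}=g_{\alpha\beta}\,c_{M,\beta}$ on overlaps, may vanish identically in some charts, and carry no evident $\g^{(k-1)}$-invariance. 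More fundamentally, the descended equivariant line bundle need not lie in the image of $j_{\g^{(k-1)}}$ at all --- this is precisely the failure of surjectivity discussed around Theorem~\ref{Th2} and Example~\ref{ex:transversality} --- so in general no representative invariant divisor on $\J^{k-1}$ exists, and the induction stalls after the first step.

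The paper's proof avoids the induction entirely. The transition functions descend to $\J^{i_0}$ for all orders at once by Proposition~\ref{prop:15}. For the weight, the prolongation formula \eqref{eq:ProlongationFormula} bounds the \emph{weighted} degree of $X^{(k)}(f_\alpha)$ by $d+1$, where $d$ is the weighted degree of $f_\alpha$; since $\lambda_\alpha(X)=X^{(k)}(f_\alpha)/f_\alpha$ is holomorphic on the whole chart it is a polynomial of weighted degree $\leq 1$, hence contains no jet variable of order $\geq 2$ and is a pullback from $\J^1$ (weighted degree $0$ and a pullback from $\J^0$ in the fibered case). Note that the order-by-order degree count your induction would produce is strictly weaker: at level $i<k$ the coefficients $b^j_\sigma$ with $|\sigma|=k$ may raise the degree in the order-$i$ variables by one, so you only get that $\lambda_\alpha(X)$ is at most linear in each intermediate order's variables, not that it is independent of them. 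The weighted degree, which couples the orders, is the essential ingredient. To salvage an inductive structure you would need a descent lemma for equivariant line bundles with fiberwise-polynomial cocycle data rather than for divisors, which amounts to re-deriving the direct argument.
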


 \begin{proof}
On $J^k(E,m)$ we will use the split coordinate charts $\pi_{k,1}^{-1}(U_\alpha^{i_1 \cdots i_m})$. If $\pi\colon E\to M$ 
is a bundle, then the splitting is canonical and we use the charts $\pi^{-1}_{k,0}(U_\alpha)$ on $J^k \pi$.

The prolongation of a vector field $X=a^i \partial_{x^i} + b^j \partial_{y^j} \in \mathfrak g \subset \vf(E)$ is  
given by
 \begin{equation*}
X^{(k)} =a^i \partial_{x^i} + \sum_{0 \leq |\sigma|\leq k} b_\sigma^j \partial_{y_\sigma^j}
 \end{equation*}
where $b_{\sigma}^j$ are given recursively by (see \cite[Th. 3.4]{KV})
 \begin{equation}
b_{\sigma i}^j = D_{x^i}(b_\sigma^j)-y_{\sigma l}^j D_{x^i}(a^l).\label{eq:ProlongationFormula}
 \end{equation}
When $|\sigma|=d$ it is clear that $b^j_{\sigma}$ is a sum of monomials of the form 
$c(x,y) y^{j_1}_{\sigma_1} \cdots y^{j_s}_{\sigma_s}$ with $|\sigma_l| \leq d$ for each $l$ and 
$|\sigma_1|+\cdots + |\sigma_s| \leq d+1$. Thus the weighted degree of $b_\sigma^j$ is $\leq d+1$.

If $D=\{f_\alpha^{i_1 \cdots i_m}\}$ is a polynomial invariant divisor on $J^k(E,m)$ of weighted degree $d$, 
then for any $X \in \g$ the function $X^{(k)}(f_\alpha^{i_1 \cdots i_m})$ has weighted degree $\leq d+1$. 
The equality $X^{(k)}(f_\alpha^{i_1 \cdots i_m}) = \lambda_\alpha^{i_1 \cdots i_m}(X) f_\alpha^{i_1 \cdots i_m}$ implies that  $\lambda_\alpha^{i_1 \cdots i_m}(X) = X^{(k)}(f_\alpha^{i_1 \cdots i_m})/f_\alpha^{i_1 \cdots i_m}$ is holomorphic on $\pi_{k,1}^{-1}(U_\alpha^{i_1 \cdots i_m})$ if and only if the polynomial parts of the denominator is canceled out by the numerator. In this case, $\lambda_\alpha^{i_1 \cdots i_m}(X)$ is polynomial of weighted degree $\leq 1$, meaning that it is the pullback of a function $\tilde{\lambda}_\alpha^{i_1 \cdots i_m}(X) \in \mathcal{O}(U_\alpha^{i_1 \cdots i_m})$. 
 Thus we see that the $\g^{(k)}$-equivariant line bundle over $J^k(E,m)$ 
defined by the pair $(\{f_\alpha^{i_1 \cdots i_m}/f_\beta^{j_1 \cdots j_m}\}, \{\lambda_\alpha^{i_1 \cdots i_m}\})$ 
is the pullback of a $\g^{(1)}$-equivariant line bundle over $J^1(E,m)$.

If $E \to M$ is a fiber bundle, then we get a similar argument, but now $\lambda_\alpha(X)$ has weighted degree $0$, and  is therefore the pullback 
of a function in $\mathcal{O}(U_\alpha)$.
 \end{proof}

This proposition, whose second part was reformulated in Theorem \ref{Th4}, tells us that invariant polynomial divisors on $\J^k$ are sections of pullbacks of equivariant line bundles over 
$\J^1$ or $\J^0$. In particular, they are controlled by $\HH^1(\mathrm{Tot}^\bullet(C))$, where 
$C^{p,q}=C^{p,q}(\g^{(1)},\{U_\alpha^{i_1\cdots i_m}\})$ or $C^{p,q}=C^{p,q}(\g,\{U_\alpha\})$ or,  more precisely, by $\op{Pic}_{\g^{(r)}}(\J^r)$ for $r=1,0$ respectively.
It is remarkable that this fact is independent of the order $k$ (one should compare to the statement 
of the Lie-B\"acklund theorem \cite{KV,O}, although the proofs are different).

If the bundle $\pi$ has, in addition, an affine structure, then we can consider divisors with local defining 
functions $f_\alpha$ in
 \[
\mathfrak{P}_{-1}(U_\alpha) = \{f \in \mathcal{O}(\pi_{\infty}^{-1}(U_\alpha)) \mid  
f|_{\pi_{\infty}^{-1}(p)} \text{ is a polynomial for every } p \in U_\alpha\},
 \]
i.e., divisors that are polynomial on fibers of $\pi_k \colon J^k \pi \to M$. These are preserved under 
($k^{\text{th}}$-prolongation of) morphisms of affine bundles, and we will refer to them as ``polynomial divisors'' 
in this context. For such a divisor $D$, we have $[D]=\pi_k^* L$ for some line bundle $L \to M$. We can apply 
the same ideas as above to obtain the following result, which we leave without proof.

 \begin{prop} \label{prop:affjet}
Let $\g$ be a Lie algebra of projectable vector fields on an affine bundle $\pi \colon E \to M$ that preserves 
the affine structure, and $D$ be a $\g^{(k)}$-invariant polynomial divisor on $J^k \pi$. Then $[D]=\pi_k^*L$ 
for some $d\pi(\g)$-equivariant line bundle $L \to M$.
 \end{prop}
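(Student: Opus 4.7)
The plan is to combine the strategies used for Propositions \ref{prop:aff} and \ref{prop:polynomialonjets}, with the extra ingredient that the affine structure on the bundle $\pi \colon E \to M$ extends naturally to the bundle $\pi_k \colon J^k \pi \to M$, and that this extended affine structure is preserved by the prolongation $\g^{(k)}$. So first I would verify this compatibility: a projectable vector field $X \in \g$ preserving the affine structure on fibers of $\pi$ has the local form
\[
X = a^i(x)\,\partial_{x^i} + \bigl(b_0^j(x) + b_l^j(x)\,u^l\bigr)\partial_{u^j},
\]
and I claim its prolongation $X^{(k)}$ is affine in the full collection of fiber coordinates $\{u^j_\sigma\}_{|\sigma|\leq k}$ of $\pi_k$. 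This follows by induction on $|\sigma|$ using the recursion \eqref{eq:ProlongationFormula}: the total derivative of an affine expression in the $u^j_\sigma$ with $x$-dependent coefficients remains affine because $D_{x^i}$ increases the $\sigma$-index of each $u^j_\tau$-factor by one but does not multiply factors together, and the correction term $u^j_{\sigma l}\,D_{x^i}(a^l)$ is manifestly linear in the new jet variables.

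Next I would apply Proposition~\ref{prop:13} (viewing $\pi_k \colon J^k \pi \to M$ as an affine bundle) to the polynomial divisor $D = \{f_\alpha\}$: since the transition functions $g_{\alpha\beta} = f_\alpha/f_\beta$ must lie in $\mathcal{O}^\times(\pi_k^{-1}(U_\alpha \cap U_\beta))$ with both numerator and denominator polynomial in the $u^j_\sigma$, the polynomial parts must cancel, and hence $g_{\alpha\beta}$ is the pullback of some $\tilde g_{\alpha\beta} \in \mathcal{O}^\times(U_\alpha \cap U_\beta)$. This yields a line bundle $L \to M$ with $[D] = \pi_k^* L$.

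The remaining step is to upgrade $L$ to a $d\pi(\g)$-equivariant line bundle, which is the heart of the argument and directly parallels the proof of Proposition~\ref{prop:polynomialonjets}. For any $X \in \g$, the defining equation $X^{(k)}(f_\alpha) = \lambda_\alpha(X)\,f_\alpha$, combined with the affinity of $X^{(k)}$ established above, shows that $X^{(k)}(f_\alpha)$ is a polynomial in $u^j_\sigma$ of degree at most $\deg f_\alpha$. Hence $\lambda_\alpha(X) = X^{(k)}(f_\alpha)/f_\alpha$, which a priori is rational in the jet variables, must be of polynomial degree zero in the fiber coordinates of $\pi_k$, i.e.\ the pullback of a function $\tilde\lambda_\alpha(X) \in \mathcal{O}(U_\alpha)$. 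The pair $(\{\tilde g_{\alpha\beta}\}, \{\tilde\lambda_\alpha\})$ then satisfies $\partial^1 = 0$ on $M$ with respect to $d\pi(\g)$ because its pullback does so on $J^k\pi$, and we conclude $[D] = \pi_k^* L$ with $L$ a $d\pi(\g)$-equivariant line bundle.

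The main obstacle is the affinity claim for $X^{(k)}$; everything else is a direct transcription of the arguments for Propositions~\ref{prop:aff} and \ref{prop:polynomialonjets}. Once the inductive verification of affinity is in hand, the degree-counting argument that forces $\lambda_\alpha(X)$ to be a pullback from $M$ (rather than merely from $J^1\pi$ or $E$ as in the weaker polynomial settings) goes through without further difficulty.
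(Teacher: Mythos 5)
Your proposal is correct and follows exactly the route the paper intends (the paper states Proposition \ref{prop:affjet} without proof, saying it follows from ``the same ideas''): the one genuinely new ingredient is your inductive verification via \eqref{eq:ProlongationFormula} that $X^{(k)}$ remains affine in all fiber coordinates $u^j_\sigma$ of $\pi_k\colon J^k\pi\to M$ (using projectability so that $D_{x^i}(a^l)$ depends only on $x$), after which the situation reduces to Propositions \ref{prop:13} and \ref{prop:aff} applied to the affine bundle $\pi_k$. The degree-counting step forcing $\lambda_\alpha(X)$ to be a pullback from $M$ is carried out correctly, so no gaps remain.
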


 \begin{example}[Riemannian geometry]
Let $\pi\colon S^2_+T^*M\to M$ denote the bundle of nondegenerate symmetric 2-forms on $M$ and  $\g$ the Lie algebra of holomorphic vector fields on $M$, which induces a Lie algebra $\g^{(k)}$ of vector fields on $J^k \pi$ for $k=0,1,\dots$. If $D$ a polynomial $\g^{(k)}$-invariant divisor on $J^k \pi$, then $[D]$ is the pullback of a line bundle $L\to M$. 
For example, if $D$ is the divisor on $J^2 \pi$ that is given locally by the numerator of the scalar curvature of the metric, then 
 $[D]=\pi_{2}^*(\Lambda^nT^*M)^{\otimes4}$.
 \end{example}
 
 Computations of invariants in jets often result in rational relative differential invariants, which are related to polynomial differential invariants via a jet analogue of formula \eqref{eq:divrational}. This will be demonstrated in the following examples.

\subsection{Example A: Three-dimensional Heisenberg algebra on the plane}

Consider the following Lie algebra of vector fields on the plane:
 \[ 
\mathfrak g=\langle \partial_x,\partial_y, y\partial_x\rangle \subset \vf(\mathbb C^2).
 \]
It has the structure relations of the Heisenberg algebra and it prolongs naturally to the Lie algebra 
$\g^{(1)}$ of vector fields on  $J^1(\mathbb C^2,1)$. Choosing $y$ as the dependent variable gives
 \[ 
\mathfrak g^{(1)}|_{U_1} = \langle \partial_x, \partial_y, y \partial_x-y_1^2 \partial_{y_1} \rangle,
 \]
where $U_1 \subset J^1(\mathbb C^2,1)$ denotes the open chart determined by our choice of dependent variable on $\mathbb C^2$. Taking instead $x$ as the dependent variable results in a different chart $U_2 \subset J^1(\mathbb C^2,1)$ where the prolongation of $\mathfrak g$ takes the form
 \[ 
\mathfrak g^{(1)}|_{U_2} = \langle \partial_x, \partial_y, y\partial_x+\partial_{x_1} \rangle.
 \]
These two charts cover $J^1(\mathbb C^2,1)=U_1 \cup U_2$. 
On overlap $U_1 \cap U_2$ we get $(x,y,y_1)\equiv(x,y,1/x_1)$. 
In each of the two charts we compute the Chevalley-Eilenberg cohomology:
 \[ 
\HH^1( \mathfrak g^{(1)}, \mathcal{O}(U_1)) = \mathbb C^2, \qquad 
\HH^1( \mathfrak g^{(1)}, \mathcal{O}(U_2)) = 0.
 \]
A representative $\lambda_1$ of a general element  in $\HH^1(\g^{(1)},\mathcal{O}(U_1))$ takes the form
 \[ 
\lambda_1(\partial_x)=0, \qquad \lambda_1(\partial_y)=0, \qquad 
\lambda_1 (y \partial_x-y_1^2 \partial_{y_1}) = A + B y_1, \qquad A,B \in \mathbb C.
 \]
The compatibility condition $\lambda_1(X) - \lambda_2(X) = X(g_{12})/g_{12}$, $\forall X \in \g^{(1)}$ gives 
the general transition function $g_{12} = C y_1^{-B} e^{A/y_1}$. This function is holomorphic on $U_1 \cap U_2$ 
if and only if $B \in \mathbb Z$. Changing the representative $(g,\lambda)\in C^{0,1}\times C^{1,0}$ by 
the coboundary $\partial^0\mu$ where $\mu_1=1, \mu_2=C e^{A x_1}$, we get $g_{12} = y_1^{-B}$ and
 \[ 
\lambda_1 = (0,0,A+By_1), \qquad \lambda_2 = (0,0,A).
 \]
Thus $\op{Pic}_{\g^{(1)}}(\J^1)=\C\times\mathbb Z\to\op{Pic}(\J^1)=\mathbb Z$ is epimorphic.

We identify a generating set $(I,\nabla)$ of absolute differential invariants in charts as follows:
 \[
\Bigl(-\frac{y_2}{y_1^3},\frac{1}{y_1}D_x\Bigr)\ \text{ on }\ U_1\quad\longleftrightarrow\quad 
\bigl(x_2,D_y\bigr)\ \text{ on }\ U_2.
 \]

The invariant divisors on $J^1(\mathbb C^2,1)$ are generated by $f_1=y_1, f_2=1$ of weight $(A,B)=(0,-1)$. 
Note that the invariant ODE $y_1=0$ is not visible from the local computations on $U_2$.  
Indeed, its solutions are $y=\mathrm{const}$ for the independent variable $y$, which are not graphs $x=h(y)$.

General $\g^{(2)}$-invariant divisors on $J^2(\mathbb C^2,1)$ are generated by $f=\{f_1,f_2\}$ and 
the absolute invariant $I$. In particular, the irreducible invariant submanifolds of codimension 1 in $\J^2$ 
are given by the divisors $\tilde f=\{\tilde{f}_1,\tilde{f}_2\}=\{y_2-C y_1^3,x_2+C\}$ of weight  $(A,B)=(0,-3)$,  parametrized by $C \in \mathbb C$.

Note that the non-zero parameter $A$ above is not realizable by an invariant divisor (on $\J^1$ such are $y_1^{-B}$).
Higher prolongations give no new weights of polynomial divisors  and we conclude, with the help of Proposition \ref{prop:polynomialonjets},
 \[
\mathbb Z=  j_{\g^{(\infty)}}\op{Div}_{\g^{(\infty)}}^{\op{rat}}(\J^\infty)\subset
\op{Pic}_{\g^{(1)}}(\J^1)=\C\times\mathbb Z.
 \]

\subsection{Example B: Invariant divisors of curves in the  projective plane} \label{sect:projectivecurves}

Consider the Lie algebra $\mathfrak{sl}(3,\C)\subset\vf(\C P^2)$ of projective vector fields. 
Differential invariants of curves in  the projective plane were studied already in 1878 by Halphen in his PhD thesis 
\cite{H} (see also the recent treatment \cite{Projective} in the real case). In this section we demonstrate how 
the framework developed in this paper sheds new light on those classical invariants.

The manifold $\mathbb CP^2$ is covered by the three charts $U_i=\mathbb CP^2 \setminus \{z_i =0\}$, $i=1,2,3$, 
where $[z_1:z_2:z_3]$ are homogeneous coordinates. Let us start by focusing on $U_3$ with coordinates 
$x=z_1/z_3, y=z_2/z_3$. In these local coordinates we have
 \begin{align*}
\mathfrak{sl}(3,\mathbb C)|_{U_3} = \langle \partial_x, \partial_y, y \partial_x, x\partial_y, x \partial_x-y \partial_y, x \partial_x+y\partial_y, x^2 \partial_x+xy\partial_y, xy\partial_x+y^2\partial_y\rangle.
  \end{align*}

\subsubsection{Equivariant line bundles} 

The cohomology group $\HH^1(\mathfrak{sl}(3,\C),\mathcal{O}(U_3))=\C$ was computed in 
\cite[Table 3]{GKO}, and  also in \cite{S}. Our global approach shows that
 \[
\mathrm{Pic}_{\mathfrak{sl}(3,\mathbb C)}(\mathbb CP^2) \simeq \mathrm{Pic}(\mathbb CP^2) = 
\left\{\mathcal{O}_{\mathbb CP^2}(k) \mid k \in \mathbb Z \right\}\simeq\mathbb Z.
 \]
Skipping the details of this computation, we instead focus on the corresponding computation in $J^1(\C P^2,1)$.
Choosing $y$ as the ``dependent'' variable we get an open coordinate chart  $U_{3}^y \subset J^1(U_3,1)$ 
in which the prolonged vector fields take the form
 \begin{equation}\begin{gathered}\label{sl3pr}
\hskip-7pt X_1=\partial_x, \ \ X_2=\partial_y, \ \ X_3=y \partial_x-y_1^2 \partial_{y_1}, \ \
 X_4 = x \partial_y+\partial_{y_1}, \ \ X_5 = x \partial_x-y\partial_y-2 y_1 \partial_{y_1}, \hskip-8pt \\
\hskip-7pt X_6 = x \partial_x+y\partial_y, \ \ X_7 = x^2 \partial_x+xy \partial_y+(y-xy_1)\partial_{y_1}, \ \
 X_8 = xy\partial_x+y^2 \partial_y+(y-xy_1)y_1 \partial_{y_1}. \hskip-7pt
 \end{gathered}\end{equation}
Let us start by computing $\HH^1(\mathfrak{sl}(3,\C)^{(1)},\mathcal{O} (U_3^y))$. 
For a general cocycle $\lambda_3^y$, we define 
 \[
a_i(x,y,y_1) := \lambda_3^y(X_i)\in \mathcal{O}(U_3^y).
 \] 
By subtracting a coboundary we can set $a_1= 0$. The cocycle condition involving $X_1$ and $X_2$ implies that 
$\partial_x(a_2)=0$, and by subtracting a coboundary (now $x$-independent) we set $a_2 = 0$. 
The eight cocycle conditions
 \[ 
X_i(a_j)-X_j(a_i)-\lambda([X_i,X_j])=0, \qquad 1\leq i\leq2,\ 3\leq j\leq6
 \]
reduce to $X_i(a_j)=0$ and imply that $a_3, a_4, a_5, a_6$ are independent of $x$ and $y$.

By subtracting a coboundary (independent of $x$ and $y$) we set $a_4(y_1) = 0$.
Then for the PDE system defined by the remaining cocycle conditions, we get the general holomorphic solution:
 \begin{gather*}
a_1 = 0, \quad a_2 = 0, \quad a_3= A_2 y_1, \quad a_4= 0, \quad a_5=A_2, \quad a_6=A_1, \\ 
a_7 = \frac{3A_1+A_2}{2} x, \quad a_8 = A_2 x y_1+\frac{3A_1-A_2}{2} y,
 \end{gather*}
from which we see that $\HH^1(\mathfrak{sl}_3^{(1)}, \mathcal{O}(U_3^y))=\mathbb C^2$.

A similar computation can be done in the open coordinate chart $U_3^x \subset J^1(U_3,1)$, where 
$x$ is the dependent variable. In these coordinates, related to the previous by $x_1=1/y_1$ on overlap 
$U_3^y \cap U_3^x$, the prolonged vector fields take the form
 \begin{gather*}
X_1=\partial_x, \quad   X_2=\partial_y, \quad   X_3=y \partial_x+ \partial_{x_1}, \quad   X_4 = x \partial_y-x_2^2\partial_{x_1}, \quad   X_5 = x \partial_x-y\partial_y+2 x_1 \partial_{x_1}, \\
X_6 = x \partial_x+y\partial_y, \quad \tilde X_7 = x^2 \partial_x+xy \partial_y+(x-yx_1)x_1 \partial_{x_1}, \quad   X_8 = xy\partial_x+y^2 \partial_y+(x-yx_1)\partial_{x_1}.
 \end{gather*}
Defining $b_i(y,x,x_1):=  \lambda_3^x(X_i) \in \mathcal{O}(U_3^x)$, and repeating the computations above, a general representative of an element in $\HH^1(\mathfrak{sl}_3^{(1)},\mathcal{O}(U_3^x))$ is given by
 \begin{gather*}
 b_1 = 0, \quad b_2 = 0, \quad b_3=0, \quad b_4=\tilde A_2 x_1, \quad b_5=-\tilde A_2, \quad b_6=\tilde A_1, \\ b_7 = \tilde A_2 y x_1+\frac{3\tilde A_1-\tilde A_2}{2} x, \quad b_8 = \frac{3 \tilde A_1+\tilde A_2}{2} y,
 \end{gather*}
implying $\HH^1(\mathfrak{sl}(3,\mathbb C)^{(1)}, \mathcal{O}(U_3^x))=\mathbb C^2$.

The compatibility condition $\lambda_3^y(X)-\lambda_3^x(X)= X(g_{33}^{yx})/g_{33}^{yx}$ implies that 
$\tilde A_1=A_1$ and $\tilde A_2=A_2$. In this case the transition function has the form 
$g_{33}^{yx} = Cy_1^{-A_2}$, and it is holomorphic if and only if $A_2 \in \mathbb Z$.  
The constant $C$ can be set equal to $1$ via a suitable $\mathfrak{sl}(3,\mathbb C)^{(1)}$-invariant 
$\delta^{0,0}$-coboundary. Thus we conclude 
$\op{Pic}_{\mathfrak{sl}(3,\C)^{(1)}}(J^1(U_3,1))=\C\times\mathbb Z$.

Next, we perform similar computations on the remaining charts $U_1^x, U_1^y, U_2^x, U_2^y$ of $J^1(\C P^2,1)$. 
In $U_2\subset\C P^2$ we have coordinates $(\tilde x, \tilde y)=(z_1/z_2, z_3/z_2)$. Choosing $\tilde y$ 
as dependent variable results in coordinates $(\tilde x,\tilde y,\tilde y_1)$ on $U_2^y\subset J^1(\C P^2,1)$.  
On $U_3^y \cap U_{2}^y$ we have $x=\tilde x/\tilde y$, $y=1/\tilde y$ and 
$y_1=\tilde y_1/(\tilde x\tilde y_1-\tilde y)$. In these coordinates, the generators of 
$\mathfrak{sl}(3,\mathbb C)^{(1)}$ are:
 \begin{gather*}
X_1 = \tilde y \partial_{\tilde x} - \tilde y_1^2 \partial_{\tilde y_1}, \quad 
X_2 = - \tilde x \tilde y \partial_{\tilde x} - \tilde y^2 \partial_{\tilde y} + (\tilde x \tilde y_1-\tilde y) \tilde y_1 \partial_{\tilde y_1}, \quad 
X_3 = \partial_{\tilde x}, \\  
X_4 = - \tilde x^2 \partial_{\tilde x} - \tilde x \tilde y \partial_{\tilde y}+ (\tilde x \tilde y_1 - \tilde y) \partial_{\tilde y_1}, \quad 
X_5= 2 \tilde x \partial_{\tilde x}+\tilde y \partial_{\tilde y}-\tilde y_1 \partial_{\tilde y_1}, \\ 
X_6 = - \tilde y \partial_{\tilde y} - \tilde y_1 \partial_{\tilde y_1}, \quad 
X_7 = -\tilde x \partial_{\tilde y} - \partial_{\tilde y_1}, \quad X_8 = - \partial_{\tilde y}.
 \end{gather*}
Defining $c_i(\tilde x, \tilde y, \tilde y_1) := \lambda_{2}^y(X_i) \in \mathcal{O}(U_{2}^y)$ 
yields a general element in $\HH^1(\mathfrak{sl}(3,\mathbb C)^{(1)}, \mathcal{O}(U_{2}^y))$:
 \begin{gather*}
c_1 = \frac{3 B_1 + B_2}{2} \tilde y_1,\quad 
c_2 =  - \frac{3 B_1+B_2}{2} \tilde x  \tilde y_1 + \frac{3 B_1-B_2}{2} \tilde y, \quad c_3= 0, \\  
c_4= -B_2 \tilde x, \quad c_5= B_2, \quad c_6= B_1, \quad c_7= 0, \quad c_8= 0.
 \end{gather*}
The compatibility condition $\lambda_{3}^y(X) - \lambda_{2}^y(X) = X(g_{32}^{yy})/g_{32}^{yy}$ implies that 
$B_1=(A_2-A_1)/2$ and $B_2=(3A_1+A_2)/2$. The transition function on $U_3^y \cap U_{2}^y$ is given by 
$g_{32}^{yy} = \tilde C \tilde y^{-B_2} (\tilde x \tilde y_1 - \tilde y)^{A_2}$. It is holomorphic if and only if 
$A_2,B_2 \in \mathbb Z$. To sum up, we have 
 \begin{equation}\label{A2A}
A_2\in\mathbb Z\ \text{ and }\ (3A_1+A_2)/2\in\mathbb Z.
 \end{equation}
By doing a similar analysis on the intersection of the remaining charts, one gets 
 \begin{equation}\label{sl3CCP2}
\mathrm{Pic}_{\mathfrak{sl}(3,\mathbb C)^{(1)}} (J^1(\mathbb CP^2,1)) = \mathbb Z^2.
 \end{equation}
Furthermore, the map $\op{Pic}_{\mathfrak{sl}(3,\C)^{(1)}} (J^1(\C P^2,1))\to\op{Pic}(J^1(\C P^2,1))$ 
is injective, since we have  $\Hmod^1(\mathfrak{sl}(3,\C)^{(1)},J^1(\C P^2,1))=0$.

Let us compare this to known bundles over $J^1(\mathbb CP^2,1)$, starting with canonical bundles. The line 
bundle $\Lambda^3 T^* J^1(\mathbb CP^2,1) \to J^1( \mathbb CP^2,1)$ corresponds to $(A_1,A_2)=(-2,2)$, 
while the pullback of the line bundle $\Lambda^2T^*\C P^2\to\C P^2$ via $\pi_{1,0}\colon J^1(\C P^2,1)\to\C P^2$ 
corresponds to $(A_1,A_2)=(-2,0)$. This is easy to check by computing divergences of $X_1,\dots,X_8$ with respect to 
the volume forms  $\Omega_0= dx \wedge dy$ and $\Omega_1 = dx \wedge dy \wedge dy_1$ on 
$U_3 \subset \mathbb CP^2$ and $U_3^y \subset J^1(\mathbb CP^2,1)$, respectively: $\mathrm{div}_{\Omega_0}$ corresponds to $(A_1,A_2)=(2,0)$ and $\mathrm{div}_{\Omega_1}$ corresponds to $(A_1,A_2)=(2,-2)$.
(Note that divergences with respect to different volume forms differ (locally) by a coboundary in the modified Chevalley-Eilenberg complex.)

Furthermore, the pullback of the line bundle $\mathcal{O}_{\mathbb CP^2}(1) \to \mathbb CP^2$ corresponds 
to $(A_1,A_2)=(2/3,0)$ because of the relation between the canonical and tautological bundles over $\C P^2$ 
(see Remark \ref{rk:tautological}). The vertical bundle $VJ^1(\C P^2,1) \subset TJ^1(\C P^2,1)$ 
corresponds to $(A_1,A_2)=(0,-2)$, while the subbundle  $\langle\omega\rangle\subset T^*J^1(\C P^2,1)$ 
defined by the contact form $\omega\in\Gamma(T^*J^1(\C P^2,1))$ corresponds to $(A_1,A_2)=(-1,1)$. 
The subset $(A_1,A_2)\subset\C^2$ satisfying \eqref{A2A} is generated by the elements $(2/3,0)$ and $(-1,1)$. 
This leads to the following concrete description:
 
 \begin{prop} \label{prop:picsl3}
Consider the standard realization of $\mathfrak{sl}(3,\C)\subset\vf(\C P^2)$, and its prolongation 
$\mathfrak{sl}(3,\C)^{(1)}\subset\vf(J^1(\C P^2,1))$. The equivariant Picard group \eqref{sl3CCP2} is
 \[
\mathrm{Pic}_{\mathfrak{sl}(3,\mathbb C)^{(1)}} (J^1(\mathbb CP^2,1))= 
\left\{\langle\omega\rangle^{\otimes k_1}\otimes \pi_{1,0}^*\mathcal{O}_{\C P^2}(k_0)  
\mid k_0, k_1\in\mathbb Z\right\}\simeq\mathbb Z^2.
 \]
The integer parameters are related to the above weights like this: $A_1=-k_1+\frac23k_0, A_2=k_1$.
 \end{prop}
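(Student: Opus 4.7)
The plan is to assemble the ingredients already laid out in the preceding discussion of the excerpt. First, I would finish the compatibility analysis across the pairs of charts in the cover $\{U_i^x,U_i^y\}_{i=1,2,3}$ of $J^1(\mathbb CP^2,1)$. The excerpt works out $U_3^y\cap U_2^y$ explicitly and extracts the two divisibility conditions $A_2\in\mathbb Z$ and $(3A_1+A_2)/2\in\mathbb Z$. By the $S_3$-symmetry of the projective action on $\mathbb CP^2$ (permuting the homogeneous coordinates $z_1,z_2,z_3$) together with the freedom of swapping dependent and independent variable on the jet bundle, every other overlap gives, after relabeling of cocycle parameters, the same two conditions, so no new constraints appear. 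Combined with $\Hmod^1(\mathfrak{sl}(3,\mathbb C)^{(1)},\mathcal{O}(J^1(\mathbb CP^2,1)))=0$ and Corollary~\ref{cor:inj}, this identifies $\mathrm{Pic}_{\mathfrak{sl}(3,\mathbb C)^{(1)}}(J^1(\mathbb CP^2,1))$ with the lattice
\[
L=\bigl\{(A_1,A_2)\in\mathbb C^2 : A_2\in\mathbb Z,\ (3A_1+A_2)/2\in\mathbb Z\bigr\}.
\]

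Second, I would describe $L$ as free abelian of rank two with basis $\{(\tfrac{2}{3},0),(-1,1)\}$. Given $(A_1,A_2)\in L$, setting $k_1:=A_2$ and $k_0:=A_2+(3A_1+A_2)/2$ yields integers, and the inversion $A_1=\tfrac{2}{3}k_0-k_1$, $A_2=k_1$ produces precisely the numerical relations stated in the proposition, together with the unique decomposition $(A_1,A_2)=k_0(\tfrac{2}{3},0)+k_1(-1,1)$. It then remains to match these two basis vectors with the two geometric bundles appearing in the statement: $(\tfrac{2}{3},0)$ is the weight of $\pi_{1,0}^*\mathcal{O}_{\mathbb CP^2}(1)$ already read off in the excerpt from the identity $\mathcal{O}_{\mathbb CP^2}(-3)=K_{\mathbb CP^2}$ and the divergences of the generators on $U_3$ with respect to $dx\wedge dy$, while $(-1,1)$ is the weight of $\langle\omega\rangle$, obtained by a direct computation of the scalar $\mu_i$ in $L_{X_i}\omega=\mu_i\omega$ for $\omega=dy-y_1\,dx$ on $U_3^y$ against each prolonged generator in \eqref{sl3pr} and reading off the components of the resulting cocycle in the normal form used for the $\HH^1(\mathfrak{sl}(3,\mathbb C)^{(1)},\mathcal{O}(U_3^y))$-computation.

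Finally, the homomorphism $\mathbb Z^2\to\mathrm{Pic}_{\mathfrak{sl}(3,\mathbb C)^{(1)}}(J^1(\mathbb CP^2,1))$ given by $(k_0,k_1)\mapsto \pi_{1,0}^*\mathcal{O}_{\mathbb CP^2}(k_0)\otimes\langle\omega\rangle^{\otimes k_1}$ sends the standard basis of $\mathbb Z^2$ to the $\mathbb Z$-basis of $L$ produced above, hence is an isomorphism, which proves the proposition. The only substantial labor is the overlap bookkeeping in Step~1, but by the $S_3$-symmetry this effectively reduces to the single computation already carried out in the excerpt; the remaining steps are linear algebra and short Lie-derivative checks in a single chart.
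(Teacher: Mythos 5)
Your proposal is correct and follows essentially the same route as the paper: derive the lattice conditions $A_2\in\mathbb Z$, $(3A_1+A_2)/2\in\mathbb Z$ from the chart overlaps, observe that this lattice is freely generated by $(\tfrac23,0)$ and $(-1,1)$, and identify these generators with the weights of $\pi_{1,0}^*\mathcal{O}_{\C P^2}(1)$ and $\langle\omega\rangle$. The only additions are bookkeeping the paper leaves implicit (the $S_3$-symmetry reduction of the remaining overlaps and the explicit Lie-derivative check for $\langle\omega\rangle$), which are fine; note also that the existence of the two globally defined equivariant bundles with these weights already shows no further overlap constraints can arise.
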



\subsubsection{Invariant divisors and absolute differential invariants}

Generators for the absolute differential invariants are well-known, see e.g.\ \cite[Table 5]{O}. 
The field of rational absolute differential invariants is generated by
 \[
\Bigl( I_7=\frac{R_7^3}{R_5^8}, \nabla=\frac{R_2 R_7}{R_5^3} D_x\Bigr)
 \]
where $R_2, R_5, R_7$ are expressed in the following way on $\pi_{7,1}^{-1}(U_3^y)$:
\begin{align*}
  R_2 &= y_2,  \\
  R_5 &= 9y_2^2y_5-45y_2 y_3 y_4+40 y_3^3, \\
  R_7 &= 18 y_2^4 (9 y_2^2 y_5 - 45 y_2 y_3 y_4 + 40 y_3^3) y_7 - 189 y_2^6 y_6^2
   + 126 y_2^4 (9 y_2 y_3 y_5 + 15 y_2 y_4^2 - 25 y_3^2 y_4) y_6\\ & -  189 y_2^4 (15 y_2 y_4
   + 4 y_3^2) y_5^2 + 210 y_2^2 y_3 (63 y_2^2 y_4^2    - 60 y_2 y_3^2 y_4+32 y_3^4) y_5 -  4725 y_2^4 y_4^4 \\
  & - 7875 y_2^3 y_3^2 y_4^3
   + 31500 y_2^2 y_3^4 y_4^2 - 33600 y_2 y_3^6 y_4 + 11200 y_3^8 .
\end{align*}
We use a different set of generators than \cite{O} in order to obtain rational invariants, which by \cite{KL} are 
sufficient to separate orbits in general position. Table 5 in \cite{O} also contains the Lie determinant $R_2 R_5^2$ 
on the locus of which the orbit dimension drops. The Lie algebra $\mathfrak{sl}(3,\mathbb C)^{(6)}$ acts 
simply transitively on the complement of $\{R_2 R_5=0\} \subset \pi_{6,1}^{-1}(U_3^y)$; note that 
$\dim J^6(\C P^2,1)=\dim\mathfrak{sl}(3,\mathbb C)$. 
A complete description of the orbit structure (over $\mathbb R$) can be found in \cite{Projective}.

 \begin{remark}
Proposition \ref{prop:orbitdim} gains the following insight. Computing orbit dimensions
of $\g=\mathfrak{sl}(3,\C)$ in $J^4(\C P^2,1)$ shows that an invariant divisor exist only if 
$A_2 = 3 A_1$, in which case it is $y_2^{-A_1}$, but this is meromorphic only if $A_1\in\mathbb Z$. 
For $k\geq 5$, the generic orbit dimension of $\g^{(k)}$ on $J^k(\C P^2,1)$ is the same as that of 
$(\g^{(k)})^{\lambda}$, independently of $\lambda$. The general invariant divisor is given by 
$R_2^{2A_1-A_2}  R_5^{(A_2-3 A_1)/6}$, however this function is meromorphic if and only if 
$(2A_1-A_2), (A_2-3A_1)/6 \in\mathbb Z$. Together with \eqref{A2A} this implies that weights 
$(A_1,A_2)$ belong to the lattice generated by $(3,-3)$ and $(2,0)$.
 \end{remark}

The polynomials $R_2, R_5, R_7$ are local expressions, defined on $\pi_{7,1}^{-1}(U_3^y)$, for invariant 
polynomial divisors. But they extend uniquely to polynomial divisors on $J^7(\mathbb CP^2,1)$.  
For $R_2$, $R_5$ and $R_7$, the weight $\lambda_3^y$ is given by $(A_1,A_2)=(-1,-3)$, $(A_1,A_2)=(-6,-12)$ 
and $(A_1,A_2)=(-16,-32)$, respectively.  In particular, $R_2$ and $R_5$ do not combine to a rational 
absolute differential invariant (weight $0$), which is consistent with the fact that $\g^{(6)}$ has an open orbit on 
$J^6(\mathbb CP^2,1)$. It is also clear that $R_2$ and $R_5$ are local generators for polynomial invariant 
divisors on $J^6(\mathbb CP^2,1)$ since they generate a 2-dimensional space of weights.

Combining weights of the invariant divisors, we obtain the above absolute invariant $I_7$
together with the following invariant meromorphic tensor fields:
 \[ 
\alpha_5 = \frac{R_5}{R_2^4} dx \wedge dy \in \Gamma(\pi_{5,0}^*\Lambda^2 T^* \mathbb CP^2), \qquad \alpha_7=\frac{R_7}{R_2^3 R_5^2} (dy-y_1 dx) \in \Gamma(\pi_{7,1}^{*}\langle \omega \rangle).
 \]
The inverse bivector $\alpha_5^{-1}=\frac{R_2^4}{R_5}D_x \wedge\partial_y$ contracted with $\alpha_7$ 
gives the invariant derivation $\nabla$ above.

 \begin{remark}
These tensor fields can be compared to those of Theorem 5.1 of \cite{Projective}. Their $R_2^{-3} R_5^{2/3} 
(dy-y_1 dx)$ is multi-valued over $\C$, but its cube is the rational invariant tensor $I_7^{-1}\alpha_7^3$.
 \end{remark}

 Note that  in general polynomial divisors $\op{Div}^{\text{pol}}_\g(M)$ determine a weight sub-monoid in $\op{Pic}_\g(M)$,
while rational divisors $\op{Div}^{\text{rat}}_\g(M)$, obtained as ratios of the former, determine a lattice.

 \begin{theorem}
The lattice generated by polynomial divisors for $\g=\mathfrak{sl}(3,\C)$ acting on $J^\infty(\C P^2,1)$
is a sublattice of order 3 in the equivariant Picard group on 1-jets:
 \[
\mathbb Z^2 \simeq j_{\g^{(\infty)}}\Bigl(\op{Div}^{\text{\rm rat}}_{\g^{(\infty)}}\bigl(J^\infty(\C P^2,1)\bigr)\Bigr)
\subsetneq \op{Pic}_{\g^{(1)}}\bigl(J^1(\C P^2,1)\bigr)\simeq \mathbb Z^2.
 \]
 \end{theorem}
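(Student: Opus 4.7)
The plan is to combine Propositions \ref{prop:polynomialonjets} and \ref{prop:picsl3} with the explicit invariants $R_2,R_5,R_7$ computed in this subsection. By Proposition \ref{prop:polynomialonjets}(2), every polynomial $\g^{(\infty)}$-invariant divisor on $J^\infty(\C P^2,1)$ is the $\pi_{\infty,1}$-pullback of a $\g^{(1)}$-equivariant line bundle on $J^1(\C P^2,1)$, so rational $\g^{(\infty)}$-invariant divisors land in $\op{Pic}_{\g^{(1)}}(J^1(\C P^2,1))$, which is $\mathbb Z^2$ by Proposition \ref{prop:picsl3}, parameterised by $(k_0,k_1)=(\tfrac{3}{2}(A_1+A_2),A_2)$. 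This places $j_{\g^{(\infty)}}\bigl(\op{Div}^{\text{\rm rat}}_{\g^{(\infty)}}(J^\infty(\C P^2,1))\bigr)$ inside the ambient $\mathbb Z^2$ on the right.

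First I would compute the weights of $R_2,R_5,R_7$ (call them $\alpha_2,\alpha_5,\alpha_7$). In the $(A_1,A_2)$-coordinates they equal $(-1,-3),(-6,-12),(-16,-32)$, and become $(-6,-3),(-27,-12),(-72,-32)$ in $(k_0,k_1)$-coordinates. Since $(-6,-12)$ and $(-16,-32)$ are both integer multiples of $(1,2)$ with $\gcd(6,16)=2$, the $\mathbb Z$-span of the three weights admits the basis $\{(-1,-3),(-2,-4)\}$ in $(A_1,A_2)$-coordinates, i.e.\ $\{(-6,-3),(-9,-4)\}$ in $(k_0,k_1)$-coordinates, of index
\begin{equation*}
\Bigl|\det\begin{pmatrix} -6 & -9\\ -3 & -4\end{pmatrix}\Bigr|=|24-27|=3
\end{equation*}
in $\mathbb Z^2$. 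This already proves that $R_2,R_5,R_7$ generate a rank-$2$ sublattice of index $\leq 3$, establishing the strict inclusion together with the left isomorphism in the theorem.

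What remains, and forms the main obstacle, is to show that \emph{no} polynomial $\g^{(\infty)}$-invariant divisor has weight outside $\langle\alpha_2,\alpha_5,\alpha_7\rangle_{\mathbb Z}$. I would exploit the orbit structure exhibited in the preceding subsection: the simple transitivity of $\g^{(6)}$ on the complement of $\{R_2R_5=0\}\subset J^6(\C P^2,1)$ implies that for $k\leq 6$ the only irreducible $\g^{(k)}$-invariant hypersurfaces in $J^k(\C P^2,1)$ are contained in $\{R_2=0\}\cup\{R_5=0\}$, so polynomial invariant divisors on $J^k$ for such $k$ are monomials in $R_2,R_5$; at $k=7$, where the generic orbit has codimension $1$ and the field of rational absolute invariants is generated by $I_7=R_7^3/R_5^8$, the new irreducible polynomial invariant divisors are $R_7$ and the components of $\{R_7^3-cR_5^8=0\}$, whose weights all lie in $\mathbb Z\alpha_5+\mathbb Z\alpha_7$; for $k\geq 8$ the further rational absolute invariants $\nabla^iI_7$ have weight zero and introduce no new weights. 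Taken together, every polynomial $\g^{(\infty)}$-invariant divisor factors, up to a rational absolute invariant, as $R_2^aR_5^bR_7^c$, pinning its weight inside the target sublattice.

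The technical heart is this last inductive step: rigorously justifying that the polynomial relative invariants on $J^k(\C P^2,1)$ are generated, modulo absolute invariants, by $R_2,R_5,R_7$. The natural route is induction on $k$, using that the total derivative $D_x$ sends a polynomial relative invariant of weight $\alpha$ on $J^{k-1}$ to one on $J^k$ with controllable weight shift, together with the observation that any hypothetical new irreducible factor whose weight fell outside $\langle\alpha_2,\alpha_5,\alpha_7\rangle_{\mathbb Z}$ would produce a $\g^{(k)}$-invariant hypersurface not accounted for by the orbit decomposition and by the generators $R_2,R_5,R_7$ of the ring of local polynomial invariants. Combining this verification with the index computation above gives the theorem.
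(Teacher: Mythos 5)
Your lower-bound computation is correct and coincides with the paper's: the weights of $R_2,R_5,R_7$ in the $(k_0,k_1)$-coordinates of Proposition \ref{prop:picsl3} are $(-6,-3)$, $(-27,-12)$, $(-72,-32)$, and the combinations $R_5/R_2^4$ and $R_7/(R_2^3R_5^2)$ (the tensors $\alpha_5,\alpha_7$, of weights $(-3,0)$ and $(0,1)$) show that these three weights generate exactly the sublattice $\{(k_0,k_1): k_0\in 3\mathbb Z\}$, of index $3$. Your determinant computation is a correct way to see this.

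The gap is in the upper bound, i.e., in excluding a polynomial $\g^{(\infty)}$-invariant divisor of weight with $k_0\notin 3\mathbb Z$. Your proposed induction — that every polynomial relative invariant factors, modulo absolute invariants, as $R_2^aR_5^bR_7^c$ — is not carried out, and its key step is circular: the orbit decomposition only tells you that an irreducible invariant hypersurface meeting the generic stratum is a component of a level set $\{P-cQ=0\}$ of a rational absolute invariant $P/Q$; it gives no control over the weights of the irreducible \emph{factors} of $P-cQ$. Already for $\{R_7^3-cR_5^8=0\}$ you would have to know that each irreducible component has weight in $\mathbb Z\alpha_5+\mathbb Z\alpha_7$, which is part of what is to be proved, and for $k\geq 8$ the numerators and denominators of $\nabla^iI_7$ introduce further irreducible relative invariants whose weights your argument does not constrain. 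The assertion that $R_2,R_5,R_7$ generate the ring of local polynomial relative invariants is nowhere established (and is not what Lie--Tresse-type results provide). The paper closes this gap by an entirely different, global argument: a $\g$-invariant divisor is automatically invariant under the connected group generated by the flows, which for $\mathfrak{sl}(3,\C)\subset\vf(\C P^2)$ is $PGL(3,\C)$ rather than $SL(3,\C)$; hence $[D]$ is a $PGL(3,\C)$-equivariant bundle, and by Example \ref{ExgG} the factor $\pi_{1,0}^*\mathcal{O}_{\C P^2}(k_0)$ admits a $PGL(3,\C)$-linearization only when $k_0\in 3\mathbb Z$ (the contact factor $\langle\omega\rangle^{k_1}$ is natural and imposes no condition). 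This group-level obstruction, invisible to the Lie algebra and to any local analysis on a chart, is the missing ingredient; without it, or a genuine classification of polynomial relative invariants, the strict inclusion is not established.
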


This is basically a summary of the computations. Indeed, from the tensor fields $\alpha_5$,  $\alpha_7$ 
we see that (pullbacks of) line bundles in $\op{Pic}_{\mathfrak{sl}(3,\C)^{(1)}}(J^1(\C P^2,1))$ are realized 
as $[D]$ for some rational $\mathfrak{sl}(3,\mathbb C)^{(7)}$-invariant divisor $D$ on $J^7(\mathbb CP^2,1)$ 
when $k_0/3,k_1 \in \mathbb Z$, where $k_0$ and $k_1$ are the parameters used in Proposition \ref{prop:picsl3}. 
To understand why $\mathcal{O}_{\mathbb CP^2}(1)$ is not realized in this way 
one must consider which Lie group is acting here. The Lie algebra $\g=\mathfrak{sl}(3,\C)$ on $\C P^2$
integrates to the Lie group $G=PGL(3,\C)$, and then results from Example \ref{ExgG} apply.

 \begin{remark}
Non-degenerate curves in $\C P^n$ up to projective transformations $\g=\mathfrak{sl}(n+1,\C)$ 
were studied by Wilczynski \cite{Wi}. He computed fundamental differential invariants 
via the correspondence with linear ordinary differential equations of order $n+1$.
Our results generalize to give two-dimensional lattice $\op{Pic}_{\g^{(1)}}\bigl(J^1(\C P^n,1)\bigr)$,
 which constrains the weights of relative differential invariants.
 \end{remark}

\subsection{Example C: Second-order ODEs modulo point transformations revisited}\label{sect:ExC}

Finally for $h\in\mathcal{O}(U)$, $U\subset J^1(\mathbb C P^2,1)$, we consider scalar second-order ODEs 
\begin{equation}
\{y_2=h(x,y,y_1)\}\subset J^2(\C P^2,1) \label{eq:ODE}
\end{equation}
together with the Lie algebra sheaf $\g=\vf(\J^0)$ of germs of holomorphic vector fields on $\J^0=\C P^2$. 
Here and throughout this section we use the notation $\J^s=J^s(\mathbb C P^2,1)$, while $J^k(\J^1)$ consists of $k$-jets 
of functions $h$ on $\J^1$. Our goal is to find generators for the  invariant divisors on $J^4(\J^1)$. 

Relative invariants were first found by A.\ Tresse in \cite{T} via Lie theory 
and then by E.\ Cartan via his theory of moving frames \cite{Ca}.
We apply our global framework to justify the (two-dimensional) weight lattice introduced in \cite{K1}
and generate relative invariants for this classical problem in a novel and conceptually transparent manner.

Any vector field on $\J^0$ prolongs uniquely to a vector field on $\J^2$. This action induces 
an (infinitesimal) transformation on the space of second-order ODEs. Choose local coordinates $x,y$ on $\C P^2$,
denote $p=y_1,u=y_2$ the induced coordinates on $\J^2$, then an ODE is a hypersurface $u=h(x,y,p)$
in $J^0(\J^1)=\J^2$. Redefining $\g$ to be the image (prolongation) of $\vf(\J^0)$ in $\J^2$,
its further prolongation, the Lie algebra $\mathfrak g^{(k)} \subset \vf(J^k(\J^1))$, 
is spanned by the vector fields of the form
 \begin{equation} \label{eq:odevf}
a D_x+b D_y+c D_p+ \sum_{|\sigma|\leq k} D_\sigma^{(k)}(\psi) \p_{u_\sigma}
 \end{equation}
where $a,b$ are functions of $x,y$, $c=(\partial_x+p\partial_y)\varphi$ for $\varphi = b-pa$,
$D_x$ is the operator of total derivative by $x$ and similar for $D_y,D_p$, while $D_\sigma$ is their
composition for multi-indices of variables (see \cite{K1}),
and the function $\psi$ is given by
 \[ 
\psi= (\partial_x+p\partial_y)^2\varphi+ u(\partial_y\varphi-2(
\partial_xa+p\partial_ya)-a u_x-b u_y-c u_p.
 \]

The Lie algebra $\mathfrak g^{(0)}=\g$ preserves the fibers of the affine bundle $J^0(\J^1)\to\J^1$   (and their affine structure). 
Thus, in order to compute invariant divisors that are polynomial on fibers of  $J^k(\J^1)\to\J^1$,
we exploit Proposition \ref{prop:affjet} and start with classification of $\g$-equivariant line bundles on $\J^1$.

\subsubsection{$\g$-equivariant line bundles}

In Example B we saw that the $\mathfrak{sl}(3,\C)^{(1)}$-equivariant line bundles on 
$\J^1$ were generated by the line bundles $\pi_{1,0}^*\mathcal{O}_{\mathbb CP^2}(1)$ 
and $\langle\omega\rangle\subset T^*\J^1$. 
Since $\mathfrak{sl}(3,\C)^{(1)}\subset\g$, we have a natural homomorphism
 \begin{equation}\label{eq:restrsl3}
\op{Pic}_\g(J^1(\C P^2,1))\to\op{Pic}_{\mathfrak{sl}(3,\C)^{(1)}}(J^1(\C P^2,1)).
 \end{equation}

 \begin{prop}\label{PpPp}
Homomorphism \eqref{eq:restrsl3} is an isomorphism.
 \end{prop}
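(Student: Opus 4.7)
For injectivity, I invoke Corollary \ref{cor:inj}(i). Since $\J^1=\mathbb P(T\C P^2)$ is compact, $\mathcal{O}(\J^1)=\C$ with trivial $\g^{(1)}$-action, so $\Hmod^1(\g^{(1)},\mathcal{O}(\J^1))$ reduces to $\HH^1(\Gamma(\g^{(1)}),\C)$ with trivial coefficients. The global sections $\Gamma(\g^{(1)})$ are precisely the prolongations of global holomorphic vector fields on $\C P^2$, i.e., $\mathfrak{sl}(3,\C)^{(1)}$; since $\mathfrak{sl}(3,\C)$ is simple, Whitehead's lemma gives $\HH^1(\mathfrak{sl}(3,\C)^{(1)},\C)=0$. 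Thus $\Phi_1\colon \op{Pic}_{\g^{(1)}}(\J^1)\to\op{Pic}(\J^1)$ is injective, and since \eqref{eq:restrsl3} intertwines this with the corresponding injective $\Phi_1$ on the target side (already observed in Example B), the restriction map itself is injective.

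For surjectivity, I produce $\g^{(1)}$-equivariant structures on the two generators $\pi_{1,0}^*\mathcal{O}_{\C P^2}(1)$ and $\langle\omega\rangle$ of the target from Proposition \ref{prop:picsl3}. The contact line bundle $\langle\omega\rangle=(T\J^1/C)^*$ is $\g^{(1)}$-equivariant by naturality: prolongations of point transformations preserve the contact distribution $C\subset T\J^1$ by definition, hence act on the quotient line bundle. For $\pi_{1,0}^*\mathcal{O}_{\C P^2}(1)$, I use a divergence cocycle: on a chart $U_\alpha$ of $\C P^2$ with volume form $\Omega_\alpha=dx\wedge dy$, define $\lambda_\alpha(X)=\tfrac{1}{3}\op{div}_{\Omega_\alpha}(X)$ for $X=a\partial_x+b\partial_y\in\vf(U_\alpha)$. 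This is a Chevalley--Eilenberg cocycle on the sheaf $\vf(\C P^2)$ (divergence being the classical cocycle representing $K_{\C P^2}$), and on $\mathfrak{sl}(3,\C)$ it restricts to the weight $(A_1,A_2)=(2/3,0)$ that classifies $\mathcal{O}_{\C P^2}(1)$ in Example B. Compatibility across charts reduces to the identity $\mathcal{O}_{\C P^2}(1)^{\otimes 3}\simeq K_{\C P^2}^{-1}$: the standard transition functions of $\mathcal{O}_{\C P^2}(1)$ are global cube roots of those of $K_{\C P^2}^{-1}$ (the Jacobians of coordinate changes), which yields the required compatibility $\lambda_\alpha-\lambda_\beta=X(\log g_{\alpha\beta})$. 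Pulling back along $\pi_{1,0}$ gives the $\g^{(1)}$-lift on $\J^1$, and tensor products with powers of $\langle\omega\rangle$ realize the entire lattice $\mathbb Z^2$.

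The main obstacle is the careful handling of the fractional coefficient $\tfrac{1}{3}$ in the cocycle: one must verify that the cube roots of the Jacobians implicitly required for compatibility are globally single-valued holomorphic functions matching the standard transition functions of $\mathcal{O}_{\C P^2}(1)$. This reduces to the classical monomial structure of transition functions on $\C P^n$: between standard charts the Jacobians are (up to sign) cubes of ratios of homogeneous coordinates, which are precisely the transition functions of $\mathcal{O}_{\C P^2}(1)$, so canonical cube roots exist. Once this is verified, combining injectivity and surjectivity yields the claimed isomorphism.
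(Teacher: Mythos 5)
Your surjectivity argument is correct and essentially identical to the paper's: naturality of the contact distribution handles $\langle\omega\rangle$, and the divergence cocycle $\lambda_\alpha(X)=\tfrac13\op{div}_{\Omega_\alpha}(X)$ together with the cube-root structure of the Jacobians between standard charts of $\C P^2$ handles $\pi_{1,0}^*\mathcal{O}_{\C P^2}(1)$; this is exactly how the paper produces the required $\g$-lifts of the two generators.

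The injectivity half, however, has a genuine gap. You reduce $\ker\Phi_1\simeq\Hmod^1(\g,\mathcal{O}(\J^1))$ to $\HH^1(\Gamma(\g),\C)=\HH^1(\mathfrak{sl}(3,\C),\C)$ on the grounds that $\mathcal{O}(\J^1)=\C$ and that the global sections of $\g$ are $\mathfrak{sl}(3,\C)^{(1)}$. But in Example C the algebra $\g$ is a Lie algebra \emph{sheaf}, and an element of $\Hmod^1(\g,\mathcal{O}(\J^1))$ (equivalently of $\varinjlim\HH^1_d(\HH^0_\delta(C^{\bullet,\bullet}))$) is a compatible collection of cocycles $\lambda_\alpha$ defined on all \emph{local} sections of $\g$ over $U_\alpha$ --- the infinite-dimensional algebra of all prolonged holomorphic vector fields on a chart --- with values in $\mathcal{O}(U_\alpha)$, which is far from being $\C$. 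Knowing that the cocycle takes constant values (hence vanishes, by perfectness) on the eight global fields says nothing a priori about its values on local fields such as $x^2\partial_y$ or $e^x\partial_x$. Closing exactly this gap is the substance of the paper's injectivity proof: after normalizing so that $\lambda$ vanishes on $\mathfrak{sl}(3,\C)^{(1)}$, one kills $\lambda$ on the quadratic fields $Y_1,\dots,Y_4$ using their commutators with the $X_i$, shows that all polynomial vector fields are bracket-generated from these, and then uses density of polynomial vector fields in $\vf(K)$ for compact $K\subset U_3^y$ to conclude $\lambda=0$ on every local section. Without some version of this generation-plus-density argument your vanishing claim is unsupported. (The surrounding logic --- that injectivity of $\Phi_1$ on the source, combined with $\Phi_1^{\mathfrak{sl}_3}\circ r=\Phi_1^{\g}$, forces injectivity of the restriction map $r$ --- is fine once the vanishing is actually established.)
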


 \begin{proof}
We first prove that \eqref{eq:restrsl3} is surjective. 
Clearly, the bundle $\mathcal{O}_{\mathbb CP^2}(-3) \simeq \pi_{1,0}^* \Lambda^2 T^* \mathbb CP^2$ 
admits a $\g$-lift, due to naturality of the cotangent bundle. The bundle $\langle\omega\rangle\subset T^*\J^1$ 
admits a $\g$-lift since the prolongation preserves the Cartan distribution $\op{Ann}(\omega)\subset T \J^1$. 
What remains to be seen is that $\mathcal{O}_{\C P^2}(1)$ admits a $\g$-lift. 
On $\mathcal{O}_{\C P^2}(1)$, the local weight $\lambda_3$ of a general vector field 
$X=a(x,y) \partial_x+b(x,y)\partial_y$ on $U_3$ (for example) is $\lambda_3(X)=(a_x + b_y)/3$, 
and it is not difficult to check that this extends to a compatible weight for each $X \in \g$.

Now we prove injectivity. Let $[(g,\lambda)]\in\op{Pic}_\g(\J^1)$  be in the kernel of \eqref{eq:restrsl3}. 
Then $[g]=0\in\op{Pic}(\J^1)$ and there exists a representative for $[\lambda]$ such that  
$\lambda|_{\mathfrak{sl}(3,\C)^{(1)}} =0$.

Take an arbitrary point in $\J^1$ and choose a chart with coordinates centered at this point (origin). 
Due to transitivity of $\mathfrak{sl}(3,\C)^{(1)}$ on $\J^1$ we can assume, without loss of generality, 
that the coordinate chart is $U_3^y$ from Section \ref{sect:projectivecurves}. 
We will compute $\lambda|_{U_3^y}$. It is clear that if $\lambda(X) \neq 0$ for some $X\in\g$, then  
$\lambda(X)|_{U_3^y}\neq 0$ since $U_3^y\subset\J^1$ is a dense subset.

We continue with the notation from Section \ref{sect:projectivecurves}, so that 
$\mathfrak{sl}(3,\C)^{(1)}|_{U_3^y} = \langle X_1, \cdots, X_8 \rangle$ with $X_i$ given by 
\eqref{sl3pr}. We have $\lambda(X_1)=\cdots =\lambda(X_8)=0$. Next, consider the vector fields
 \begin{gather*}
Y_1 = x^2 \partial_y+2x\partial_{y_1} ,  \quad Y_2 = x^2\partial_x-2xy \partial_y - (4xy_1+2y)\partial_{y_1}, \\
Y_3 = y^2 \partial_y -2xy \partial_x+(2xy_1^2+4yy_1)\partial_{y_1},  \quad Y_4=y^2 \partial_x-2yy_1^2 \partial_{y_1}.
 \end{gather*}
The commutation relations
 \begin{gather*}
  [X_1,Y_1]= 2 X_4, \quad [X_2,Y_1]=0, \quad [X_4,Y_1]=0, \\
  [X_1,Y_2]= 2 X_5, \quad [X_2,Y_2]=-2X_4, \quad [X_4,Y_2]=-3 Y_1, \\
  [X_1,Y_3]= -2 X_3, \quad [X_2,Y_3]=-2X_5, \quad [X_4,Y_3]=-2 Y_2, \\
  [X_1,Y_4]=0, \quad [X_2,Y_4]=2 X_3, \quad [X_4, Y_4]=-Y_3, \\
  [X_6, Y_i]=Y_i, \qquad i=1,2,3,4,
 \end{gather*}
give four differential equations on each function $\lambda(Y_i)$, implying $\lambda(Y_1)=\cdots=\lambda(Y_4)=0$.

Furthermore, all polynomial vector fields are generated by $X_1,\dots, X_8$ and $Y_1, \dots, Y_4$. 
Indeed, for $j \geq 3$ we have
\begin{align*}
  x^i y^{j-i} \partial_x &= \frac{1}{i-3} [x^2 \partial_x,x^{i-1} y^{j-i} \partial_x], \qquad i \neq 0,3, \\
  y^{j} \partial_x &= \frac{1}{j-1} [y^2 \partial_y, y^{j-1} \partial_x],\\
  x^3 y^{j-3} \partial_x &= \frac{1}{j-2} \left([x^2 \partial_y,xy^{j-2} \partial_x] +2x^2 y^{j-2} \partial_y\right),
\end{align*}
and by swapping $x$ and $y$ we also generate $x^i y^{j-i} \partial_y$ for $i=0,\dots,j$.
Thus all vector fields with polynomial coefficients of degree $\geq 3$ are of the form $[Z,Y]$, 
where the coefficients of $Y$ have degree $2$ and the coefficients of $Z$ have degree strictly lower than 
those of $[Z,Y]$. Then the general cocycle condition
 \[ 
\lambda([X,Y]) =X(\lambda(Y))-Y(\lambda(X)) 
 \]
implies that $\lambda(X)=0$ for any polynomial vector field $X$ on $U_3^y$. 

On any compact subset $K\subset U_3^y$, the subspace of vector fields in $\vf(K)$ with polynomial coefficients 
is dense in $\vf(K)$. It follows that $\lambda(X)|_K = 0$ for every $X \in \g$ for any $K$, and hence that 
$\lambda(X)|_{U_3^y}=0$ for every $X \in \g$. Thus $\lambda=0$.
 \end{proof}

\subsubsection{Invariant divisors}

Now we compute the $\g^{(4)}$-invariant divisors on $J^4(\J^1)$. 
Let us work in the coordinate chart $\tau_{4}^{-1}(U_3^y)$, where $\tau_4$ denotes the projection 
$\tau_{4}\colon J^4(\J^1)\to\J^1$. From Proposition \ref{PpPp}, we know that 
$[\lambda]\in H^1(\g,\mathcal{O}(U_3^y))$ has a representative of the form
 \[ 
\lambda=C_0 \mathrm{div}_{dx \wedge dy}+ C_1 \mathrm{div}_{dx \wedge dy \wedge dy_1},
 \]
where $(C_0,C_1)$ is related to $(A_1,A_2)$ by $A_1=2(C_0+C_1)$ and $A_2=-2C_1$. Condition \eqref{A2A}
is equivalent to $3 C_0 ,2 C_1 \in \mathbb Z$. If $f$ is a general polynomial of some fixed degree, then the system
 \[ 
X^{(k)} f = \lambda(X) f, \qquad X \in \g^{(0)}
 \]
reduces to a linear system on the coefficients of $f$ for each choice of $(C_0,C_1)$. By sequentially setting 
$C_0=0, \pm 1/3, \pm 2/3, \dots$ and $C_1=0, \pm 1/2, \pm 1, \dots$ and letting $f$ be a general polynomial 
of degree 3 with undetermined coefficients, we get a series of linear systems determining the coefficients of the 
polynomial. In this way, we obtain the solutions
\begin{align*}
  f_1 &= u_{pppp}, \\
  f_2 &=u_{xxpp} + 2 p u_{xypp} + 2 u u_{xppp} + p^2 u_{yypp} + 2 p u u_{yppp} + u^2 u_{pppp} + (u_{y} u_{ppp} - u_{p} u_{ypp} - 4 u_{yyp}) p  \\ &- 3 u u_{ypp} + (-u_{xpp} + 4 u_{yp}) u_{p} + u_{x} u_{ppp} - 3 u_{y} u_{pp} + 6 u_{yy} - 4 u_{xyp},
\end{align*}
which  have weights $(C_0, C_1)= (2,-5/2)$ and $(-2,1/2)$, respectively. 
Computing the rank of prolonged vector fields at generic point we conclude that
the action of $\g^{(4)}$ has an open orbit in $J^4(\J^1)$. Thus there are no (nonconstant) absolute invariants on $\J^4$. 
Now, if $f_3$ was another invariant divisor of  general weight $(C_0,C_1)=(2A-2B,(B-5A)/2)$ with rational $A,B$, 
then for some integer $m$ the ratio
 \[ 
\frac{f_3^m}{f_1^{Am} f_2^{Bm}}
 \]
 is a rational function with weight $(0,0)$ and hence is an absolute differential invariant,  and therefore constant. 
Hence $f_3^m$ is proportional to $f_1^{Am} f_2^{Bm}$. 

Taking into account Proposition \ref{prop:affjet} 
we conclude the following.

 \begin{theorem}
The lattice generated by polynomial divisors for the Lie algebra $\g=\vf(\J^0)$ acting on $J^\infty(\J^1)$ is a sublattice in the equivariant Picard group on 1-jets:
 \[
\mathbb Z^2\simeq j_{\g^{(\infty)}}\Bigl(\op{Div}^{\text{\rm { rat }}}_{\g^{(\infty)}}\bigl(J^\infty(\J^1)\bigr)\Bigr)
\subsetneq \op{Pic}_{\g}\bigl(\J^1\bigr)\simeq \mathbb Z^2.
 \]
 \end{theorem}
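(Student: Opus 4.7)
The plan is to combine Proposition~\ref{PpPp}, which identifies $\op{Pic}_\g(\J^1)\simeq\op{Pic}_{\mathfrak{sl}(3,\C)^{(1)}}(\J^1)\simeq\mathbb Z^2$ via restriction from $\g$ to the projective subalgebra, with Proposition~\ref{prop:affjet} applied to the affine bundle $J^0(\J^1)=\J^2\to\J^1$. Since $\g=\mathfrak g^{(0)}$ preserves the fibers and the affine structure of this bundle, any $\g^{(k)}$-invariant polynomial divisor on $J^k(\J^1)$ has $[D]=\tau_k^*L$ for some $\g$-equivariant $L\to\J^1$; hence $j_{\g^{(\infty)}}$ takes values inside $\op{Pic}_\g(\J^1)\simeq\mathbb Z^2$, and the task reduces to identifying the image sublattice.

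I would then fix the basis of $\op{Pic}_\g(\J^1)$ coming from Proposition~\ref{prop:picsl3}, which in the $(C_0,C_1)$-parameterization of weights corresponds to $e_1=(\tfrac13,0)$ (pullback of $\mathcal{O}_{\C P^2}(1)$) and $e_2=(0,-\tfrac12)$ (the contact bundle $\langle\omega\rangle$). The two explicit invariant polynomial divisors constructed just above, namely $f_1=u_{pppp}$ of weight $w_1=(2,-\tfrac52)$ and $f_2$ of weight $w_2=(-2,\tfrac12)$, translate in this basis to the lattice vectors $(6,5)$ and $(-6,-1)$, whose determinant is nonzero:
\[
\det\begin{pmatrix}6 & -6\\ 5 & -1\end{pmatrix}=24.
\]
This yields two of the required assertions: the $\mathbb Z$-span of $w_1,w_2$ is rank two and hence isomorphic to $\mathbb Z^2$, while the index $24$ makes the inclusion strict (a concrete non-realized weight is $\mathcal{O}_{\C P^2}(1)\leftrightarrow(1,0)$, since the system $6x-6y=1,\ 5x-y=0$ has no integer solutions).

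The main remaining step, which I expect to be the principal obstacle, is showing that no further weights occur. Here I would invoke the open-orbit property of $\g^{(4)}$ on $J^4(\J^1)$ (established by a rank computation on prolonged generators), which rules out nonconstant absolute differential invariants on $J^\infty(\J^1)$. Given any further $\g^{(k)}$-invariant polynomial divisor $f_3$ of weight $w_3$, writing $w_3=Aw_1+Bw_2$ with $A,B\in\mathbb Q$ and clearing denominators by some $m\in\mathbb Z_+$, the quotient $f_3^m/(f_1^{Am}f_2^{Bm})$ is a rational absolute invariant, hence constant. Since $f_1$ and $f_2$ are each linear in the top-order jet $u_{pppp}$ (with coefficients $1$ and $u^2$ respectively), they are not nontrivial perfect powers in the UFD $\mathcal{O}(J^\infty(\J^1))$; unique factorization then forces $A,B\in\mathbb Z$. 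Thus the weight monoid of polynomial invariant divisors is $\mathbb Z_{\ge 0}w_1+\mathbb Z_{\ge 0}w_2$, and formula~\eqref{eq:divrational} promotes this to the $\mathbb Z$-lattice $\mathbb Zw_1+\mathbb Zw_2$ for rational divisors, completing the plan.
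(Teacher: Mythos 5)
Your plan is correct and follows the paper's own argument essentially verbatim: the same reduction via Propositions \ref{PpPp} and \ref{prop:affjet}, the same two divisors $f_1=u_{pppp}$ and $f_2$ with weights $(2,-\tfrac52)$ and $(-2,\tfrac12)$, and the same ``no absolute invariants on $J^4$, hence $f_3^m\propto f_1^{Am}f_2^{Bm}$'' step to rule out further weights. Your explicit index-$24$ computation and the unique-factorization argument forcing $A,B\in\mathbb Z$ (which really rests on $f_1,f_2$ being irreducible, not merely non-perfect-powers --- though their linearity in the top-order jet with unit content does give this) usefully fill in details the paper leaves implicit.
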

 
Let us note that cohomology of line bundles was explored in \cite{HK} to compute Cartan invariants of projective connections, which correspond to a particular class of ODEs of the form \eqref{eq:ODE} with $h$ cubic in $y_1$; our methods though are quite distinct.

\section{Outlook}

In this work we proposed a theory of global scalar relative differential invariants, based on familiar
notions of divisors and line bundles. While $G$-equivariant line bundles were known for algebraic and
compact groups, the more general notions of equivariant Picard group $\op{Pic}_\g(M)$ and invariant 
divisor group $\op{Div}_\g(M)$ for a Lie algebra $\g$ appear to be new and have certain subtleties. 
(These notions even extend to Lie algebra sheaves, as seen in Example C.)

The basic setup is analytic, but we also consider polynomial divisors in affine bundles. 
Such bundles arise in successive jet-prolongation, and polynomial relative differential invariants 
are natural and sufficient in the equivalence problem of invariant hypersurfaces. We thus explore polynomial divisors in jet spaces.
While $j\bigl(\op{Div}(\J^\infty)\bigr)=j\bigl(\op{Div}(\J^1)\bigr)$ in $\op{Pic}(\J^\infty)=\op{Pic}(\J^1)$ 
(in the case of fiber/affine bundle $\pi$ this can be pushed down to $\J^0$, resp.\ $M$),
the $\g$-equivariant counterpart is more complicated. In general, 
$\op{Pic}_{\g^{(\infty)}}(\J^\infty)\neq\op{Pic}_{\g^{(1)}}(\J^1)$, and similarly for
invariant divisors. However weights of invariant polynomial divisors are 1-jet determined, as Propositions
\ref{prop:polynomialonjets} and \ref{prop:affjet} state. This gives an effective bound on
multipliers for relative invariants and, in many cases, an algorithmic approach to compute them.

Invariant submanifolds of higher codimensions are related, in the same manner, to higher rank equivariant  vector bundles.
While there are no  general tools that classify analytic/algebraic  vector bundles of higher rank, some part of the theory generalizes.
Weights of vector-valued relative invariants are matrix-valued cocycles, leading to a more general cohomology theory.

Lastly, there is  a differential algebra aspect to the theory of invariant divisors on jet bundles. The structure theory of these global relative differential invariants will be discussed elsewhere.

\bigskip
\bigskip

\textbf{Acknowledgments.}
The research leading to our results has received funding from the Norwegian Financial Mechanism 2014-2021
(GRIEG project SCREAM, registration number 2019/34/H/ST1/00636) and the Tromsø Research Foundation
(project “Pure Mathematics in Norway”), as well as UiT Aurora project MASCOT. The research of E.S. was partially 
funded by COST Action CaLISTA CA21109 supported by COST (European Cooperation in Science and Technology).


\end{document}